\newtheorem{theorem}{Theorem}[section]
\newtheorem{lemma}[theorem]{Lemma}
\newtheorem{example}[theorem]{Example}
\newtheorem{prop}[theorem]{Proposition}
\newtheorem{corollary}[theorem]{Corollary}
\newtheorem{remark}[theorem]{Remark}
\newcommand{\cM}{{\mathcal M}}
\newcommand{\cN}{{\mathcal N}}
\newcommand{\cA}{{\mathcal A}}
\newcommand{\cB}{{\mathcal B}}
\newcommand{\cR}{{\mathcal R}}
\newcommand{\cL}{{\mathcal L}}
\newcommand{\bC}{{\mathbb C}}
\begin{document}

\author[J. Huang]{Jinghao Huang}
\address{Institute for Advances Study in Mathematics, Harbin Institute of Technologies,
Harbin, 150001, China}
\email{jinghao.huang@hit.edu.cn}

\author[K. Kudaybergenov]{Karimbergen Kudaybergenov}
\address{Institute for Advances Study in Mathematics, Harbin Institute of Technologies,
Harbin, 150001, China and
Suzhou Research Institute of Harbin Institute of Technology, Suzhou, 215104, China}
\email{kudaybergenovkk@gmail.com}

\author[F. Sukochev]{Fedor Sukochev}
\address{School of Mathematics and Statistics, University of New South Wales,
Kensington, 2052, Australia}
\email{f.sukochev@unsw.edu.au}

\title[Rank metric isometries and determinant-preserving mappings]{Rank metric isometries and determinant-preserving mappings  on II$_1$-factors}

\begin{abstract}We fully describe the general form of  a linear (or conjugate-linear) rank metric isometry on the Murray--von Neumann  algebra  associated with a II$_1$-factor.  
%We show  that   any linear (or conjugate-linear) rank metric isometry on a Murray--von Neumann  algebra  associated with a type II$_1$-factor can be expressed as a composition of a linear (or conjugate-linear) Jordan isomorphism and a multiplication operator by an invertible element.
As an application, we establish Frobenius' theorem in the setting of II$_1$-factors, by showing that 
every   determinant-preserving  linear bijection between two II$_1$-factors is necessarily an isomorphism or an anti-isomorphism.
This confirms the Harris--Kadison conjecture (1996). 
\end{abstract}

%\date{\today}
\subjclass[2010]{47L60, 47C15, 16E50}
\keywords{Murray--von Neumann algebra; determinant; rank metric; isometry}

\maketitle

\bigskip

\section{Introduction}
\subsection{Harris--Kadison Conjecture concerning determinant-preserving bijections}
The starting point is the following classical result due to Frobenius\cite{Fro} in 1897:
\begin{quote}
    a mapping that preserves the determinant is the composition of an automorphism or an anti-automorphism of $\mathbb{M}_n$, $n\times n$ complex-valued matrices, with a left multiplication by a matrix of determinant $1$.
\end{quote}
The notion of the Fuglede-Kadison determinant, extending the classical determinant on $\mathbb{M}_n$ to finite von Neumann algebras,  was introduced in \cite{FK52}. 
Recall that every II$_1$-factor $\cM$ has a unique faithful normal tracial state $\tau.$
The Fuglede--Kadison determinant\cite{FK52} ${\rm det}: \cM \to  [0,\infty)$ is given
by
\begin{align*}
{\rm det}(x)  =\lim\limits_{\varepsilon\downarrow 0}{\rm exp}\left(\tau({\rm log}(|x|+\varepsilon\mathbf{1}))\right),\, x\in \cM.
\end{align*}
In \cite{HS07}, Haagerup and Schultz defined the Fuglede--Kadison determinant on the algebra $\mathcal{L}_{\log } (\cM,\tau_\cM)$ for any semifinite von Neumann algebra $\cM$ equipped with a faithful normal semifinite trace $\tau_\cM$  (see \cite{DSZ15,DSZ16,DNZ17} and  Section \ref{sec:proof} below). 
Nowadays, the Fuglede--Kadison determinant serves as a powerful tool across various fields, such as the topological invariant ``$L_2$-torsion''\cite{Luck,L12,LT14}, invariant subspaces for operators in II$_1$-factors\cite{HS09}, entropy of certain algebraic actions\cite{Bowen,H16,KL11}, Mahler measures and Ljapunov exponents\cite{Deninger} and 
Schur-type upper triangular forms of tuples of commuting operators\cite{CDSZ20} etc.

In their study of affine mappings of invertible operators, 
  Harris and Kadison 
  suggested 
  to establish Frobenius' theorem for II$_1$-factors
\cite[Conjecture 3.2]{HK96}:
\begin{quote}
  \it   If $\cM$ and $\cN$ are factors of type II$_1,$ a linear isomorphism (bijection) $\Phi$ of
$\cM$ onto $\cN$ that maps the unit of $\cM$ to that of $\cN$ preserves determinants if and
only if it is an (algebraic) isomorphism or an anti-isomorphism.
\end{quote}

The main aim of the present paper is to establish  Frobenius's theorem in the setting of II$_1$-factors, which
answers 
the Harris--Kadison Conjecture above in the affirmative.

\begin{theorem}\label{HKcon}
Let $\cM$ and $\cN$ be factors of type II$_1$  and let  $\Phi$  be a unital linear bijection of
$\cM$ onto $\cN$.  Then $\Phi$ preserves determinants if and
only if it is an  (linear) isomorphism or an anti-isomorphism. 
More precisely,
\begin{align}\label{precise form}
\Phi(x)=aJ(x)a^{-1},\, x\in \cM,
\end{align}
where $a$ is an invertible element in $\cN$ and $J$ is a  (linear)   Jordan $*$-isomorphism from $\cM$ onto
$\cN.$
\end{theorem}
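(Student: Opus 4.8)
The plan is to reduce the determinant-preserving condition to a rank-metric isometry statement and then invoke the structure theorem for such isometries (the "main description" referenced in the abstract, which I assume is available as a prior result in the body of the paper). First I would observe that since $\det(\Phi(x)) = \det(x)$ for all $x \in \cM$, the map $\Phi$ in particular sends invertible elements to invertible elements: $x$ is invertible in a II$_1$-factor iff $\det(x) > 0$ and $\det(x^{-1}) > 0$, or more robustly, $x$ is invertible iff $0$ is not in the spectrum, which for a finite von Neumann algebra is detected by invertibility of $|x|$; the Fuglede--Kadison determinant being positive is necessary but one must be a little careful since $\det$ can vanish on non-invertibles and also can be positive on certain non-invertible elements in the enlarged algebra $\cL_{\log}$. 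So the cleaner route is: $\Phi$ is a unital bijection preserving $\det$, hence $\Phi$ and $\Phi^{-1}$ both preserve $\det$; applying the identity to $x - \lambda \mathbf{1}$ shows $\det(\Phi(x) - \lambda\mathbf 1) = \det(x - \lambda \mathbf 1)$ for all scalars $\lambda$, i.e. $\Phi$ preserves the Brown measure's "determinant function" and in particular the spectrum-related data; from this one extracts that $\Phi$ preserves invertibility in both directions.

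Next I would connect this to the rank metric. Recall the rank metric on the Murray--von Neumann algebra $S(\cM)$ (or on $\cM$ itself) is $d(x,y) = \tau\big(\text{support of }(x-y)\big)$ — the "rank" being the trace of the range projection. The key analytic input linking $\det$ to rank is the Haagerup--Schultz / Fuglede--Kadison inequality relating the determinant of a difference to the trace of its support: roughly, if $\det(x) = \det(y)$ cannot directly control $d(x,y)$, but the substitution trick does. Specifically, for invertible $a$, $\det(a + t e) $ as $t$ varies, for a projection $e$, relates $\tau(e)$ to the growth of the determinant; more precisely I would use that $\Phi$ preserves determinants of \emph{all} elements, apply this to elements of the form $u(\mathbf 1 - e) + \lambda u e$ for unitaries $u$ and projections $e$, and compute $\det$ of both sides to force $\tau(\Phi\text{-image support}) $ behavior. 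The cleanest statement to aim for: \emph{a unital determinant-preserving linear bijection is a rank-metric isometry} (up to the normalization built into unitality). This is the technical heart and I expect it to be the main obstacle — one must rule out pathologies where $\det$ agrees but rank does not, using the logarithmic integrability of $\log|x|$ and the continuity of $\det$ in the measure topology established by Haagerup--Schultz.

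Once $\Phi$ is known to be a rank-metric isometry, I would invoke the paper's own classification theorem: every linear (or conjugate-linear) rank-metric isometry on the Murray--von Neumann algebra of a II$_1$-factor has a standard form, namely $x \mapsto a \, \psi(x) \, b$ or $x \mapsto a\, \psi(x)^* b$ where $\psi$ is a (Jordan, or $*$-, or anti-) isomorphism between the factors and $a, b$ are fixed invertible elements, possibly composed with a transpose/flip. Since the linear case is singled out in Theorem 1.1, the conjugate-linear alternatives are excluded, leaving $\Phi(x) = a J_0(x) b$ with $J_0$ a (not necessarily $*$-preserving at this stage) Jordan isomorphism — actually the rank-metric classification should already deliver a $*$-preserving Jordan isomorphism, call it $J$. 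Finally I would use the unitality $\Phi(\mathbf 1) = \mathbf 1$, which gives $ab = \mathbf 1$, hence $b = a^{-1}$, yielding $\Phi(x) = a J(x) a^{-1}$; and then re-check that this form indeed preserves $\det$ — immediate, since $\det(aJ(x)a^{-1}) = \det(a)\det(J(x))\det(a)^{-1} = \det(J(x)) = \det(x)$, the last equality because Jordan $*$-isomorphisms between II$_1$-factors preserve the trace (uniqueness of $\tau$) and hence the Fuglede--Kadison determinant. This closes the "if and only if" and produces the precise form \eqref{precise form}.
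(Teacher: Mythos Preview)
Your high-level architecture matches the paper: reduce determinant preservation to a rank-metric isometry, then invoke the structure theorem (Corollary~\ref{isometryvNa}) for such isometries; the converse and the extraction of the form $aJ(x)a^{-1}$ from unitality also go essentially as you describe. The paper likewise observes that $\det(\Phi(x)-\lambda\mathbf 1)=\det(x-\lambda\mathbf 1)$ for all $\lambda$, i.e.\ $\Phi$ preserves the Brown-measure data.

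The genuine gap is in the ``technical heart'': your proposed route from $\det$-preservation to $\|\Phi(x)\|_S=\|x\|_S$ via elements of the form $u(\mathbf 1 - e)+\lambda u e$ is left as a hope rather than an argument, and it is not clear it can be made to work (note in particular that $\det(x)>0$ does \emph{not} characterise invertibility in a II$_1$-factor; see Example~\ref{exam4}). The paper's actual mechanism is quite different and specific. Given nonzero $x$, write $x=u|x|$ with $u$ unitary; then $\det(\Phi(u))=\det(u)=1$, so $\Phi(u)^{-1}\in\mathcal L_{\log}(\cN,\tau_\cN)$, and multiplicativity of $\det$ yields
\[
\det(|x|-\lambda\mathbf 1_\cM)=\det(x-\lambda u)=\det(\Phi(x)-\lambda\Phi(u))=\det(\Phi(x)\Phi(u)^{-1}-\lambda\mathbf 1_\cN).
\]
By uniqueness of the Brown measure this gives $\mu_{|x|}=\mu_{\Phi(x)\Phi(u)^{-1}}$. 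The crucial step is then Lemma~\ref{sameBrown}: if a \emph{positive} element and an arbitrary $y\in\mathcal L_{\log}$ share a Brown measure, then $\|x\|_S\le\|y\|_S$; its proof uses the Haagerup--Schultz projection $p(x,\mathbb C\setminus\{0\})$ and Proposition~\ref{HSdecomposition} to bound $\tau(s(x))$ by $\tau(l(y))$. This gives $\|x\|_S=\||x|\|_S\le\|\Phi(x)\Phi(u)^{-1}\|_S=\|\Phi(x)\|_S$, and applying the same to $\Phi^{-1}$ yields equality. Without this polar-decomposition/Brown-measure/Haagerup--Schultz chain, your proposal does not establish the rank-metric isometry.
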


It is shown  by Aupetit that any linear spectrum-preserving bijective map between two von Neumann algebras is a Jordan isomorphism (see \cite[Theorem 1.3]{Au00}, \cite[Theorem 1.12]{Au98}, \cite{MS}).
If $\mathbb{M}_n$  is     the matrix algebra
 of all $n\times n$ matrices over $\mathbb{C}$, then  
 for any element $x\in \mathbb{M}_n$,
\begin{align}\label{eigen-det}
\mbox{{\it a number
$\lambda\in \mathbb{C}$ is an eigenvalue of $x$\,\,\,if and only  if}\,\,\,{\it ${\rm det}(\lambda -x)=0.$}}
\end{align}
In particular, this shows that any unital linear  bijective map on $\mathbb{M}_n$ is spectrum-preserving if and only if it is determinant-preserving. 
However, it is not clear to reduce the problem of  determinant-preserving mappings to that of spectrum-preserving mappings in the setting of $II_1$-factors.
Indeed, there exists a non-zero self-adjoint element $x$ in a type II$_1$-factor such that
${\rm det}(x-\lambda\mathbf{1})>0$ for all complex scalars $\lambda$ (see Example~\ref{exam4} below), which demonstrates that 
\eqref{eigen-det} does not hold in the setting of $II_1$-factors. 
Therefore, the Harris--Kadison Conjecture can not be resolved via an application of Aupetit's result. %by Aupetit's result directly.  
Nevertheless, Theorem \ref{HKcon} above together with Aupetit's result \cite{Au98} indeed yields that the class of all 
unital spectrum-preserving bijections coincides  with that of all unital determinant-preserving bijections on $II_1$-factors.

In \cite{Br}, Brown showed that for every operator $x$ in a type $II_1$ factor $(\cM,\tau)$, there exists a unique compactly supported Borel probability measure $\mu_x$ on $\mathbb{C}$, the Brown measure of $x$, such that for all $\lambda \in \mathbb{C}$, 
$$ \log {\rm det} (x-\lambda {\bf 1})= \tau(\log |x-\lambda {\bf 1}|) = \int_\mathbb{C} \log |t-\lambda |d \mu_x (t). $$
The theory of Brown measures has  been further developed by Haagerup, Larsen, Schultz, 
Dykema  and others \cite{HL00, HS07, HS09, DSZ17},  see Section \ref{sec:proof} for details, and has become a powerful tool in the study of non-self-adjoint operators.

Using the uniqueness of the Brown measure, 
we observe that a linear bijection $\Phi$ between two II$_1$-factors preserves the  Brown measures if and only if it  is  determinant-preserving.
Our strategy to confirm  the Harris--Kadison conjecture
is to show   that a determinant-preserving bijection on a $II_1$-factor must   be a rank metric isometry  (see the following subsection).
To achieve  this, we employ the theory of the Brown measures, and the so-called Haagerup--Schultz projections \cite{HS09}. 

\subsection{Rank metric isometries}

In his seminal work\cite{Neu36,Neu37} (see also \cite{vN60}), von Neumann
introduced 
the notion of a continuous ring, which  is a ring $\cR$ such that its corresponding lattice $L(\cR)$,  the set of all principal right ideals of $\cR,$ ordered by inclusion, 
has a continuous geometry. %Based on the  theory of dimensions for directly irreducible continuous geometries,
Von Neumann showed that an irreducible regular ring $\cR$ is continuous if and only if it has a rank function
${\rm rk}: \cR \to  [0,1].$ In this case, $\cR$ is complete with respect to the rank metric $(x,y) \to  {\rm rk}(x-y),$
which is induced by the rank function. 
Thus, any irreducible continuous ring has a natural topology, i.e.,  the so-called  {\it rank topology}, which is generated by the rank metric (see \cite{vN60, Sch23}).
The rank metric on regular rings  has important  applications in different areas, e.g., $L_2$-homology for tracial algebras \cite{Th1, Th2} and invariant means on topological groups using rank metrics 
\cite{SchIMRN, Sch23}.

One of the important classes of irreducible continuous rings is the  Murray--von Neumann algebra
$S(\cM)$ associated with a II$_1$-factor equipped with 
 a faithful normal tracial state $\tau_\cM$ (see \cite{Neu58}).
On $S(\cM)$ the rank function can be defined as follows (see \cite{McP, SchIMRN})
\begin{align*}
{\rm rk}(x) & =\tau_\cM(l(x))=\tau_\cM(r(x)),\, x\in S(\cM),
\end{align*}
where $l(x)$ and $r(x)$ are left and right supports of the element $x,$ respectively.

 Recall that a linear
mapping $\Phi:\left(X, \left\|\cdot\right\|_X\right)\to \left(Y, \left\|\cdot\right\|_Y\right)$ between two $F$-spaces is an isometry\cite[p.241]{Rolewicz} if
\begin{align*}
\left\|\Phi(x)-\Phi(y)\right\|_Y & = \left\|x-y\right\|_X,\, \forall x,y \in X.
\end{align*}

The description of commutative and non-commutative $L_p$-isometries has been thoroughly studied (see \cite{HS24, Lamperti, Yeadon} and references therein) since the seminal work of Banach\cite[p.170, Théorème 3]{Banach} and Stone\cite[Theorem 83]{Stone}.
In \cite{BHS24},
the authors  provided a complete description of the limiting case, i.e., 
isometries on  a non-commutative $L_0$-space (i.e., the algebra of all $\tau$-measurable operators affiliated with a semifinite von Neumann algebra equipped with 
 some $F$-norm which is  equivalent to the measure topology. It coincides with $S(\cM)$ when $\cM$ is a   von Neumann algebra with a faithful normal tracial state $\tau_\cM$). 
This extends the classical Banach--Stone theorem and Kadison's theorem for isometries of von Neumann algebras\cite{K51}.
The description \cite[Thorem 4.3 and Corollary 4.4]{BHS24} can be formulated as follows: 
\begin{quote}
the group of all $L_0$-isometries on an $L_0$-space is isomorphic to the semi-direct product of the group of all trace-preserving Jordan $*$-isomorphisms of the $L_0$-space and the group of all unitaries of the  $L_0$-space. 
\end{quote}
However,   the rank topology  and the measure topology on $S(\cM)$ are not equivalent.
The next result provides a complete description of the rank metric isometries of  II$_1$-factors (and the corresponding Murray--von Neumann algebras), which is the crucial step in the proof of Theorem~\ref{HKcon}.

\begin{theorem}\label{isometry}
Let $\cM$ and $\cN$ be von Neumann II$_1$-factors   with faithful normal tracial states $\tau_\cM$ and $\tau_\cN,$
respectively, and let $\Phi$ be a surjective linear  (or conjugate-linear) mapping  from $S(\cM)$ onto $S(\cN).$
Then $\Phi$ is a rank metric isometry if and only if
\begin{align}\label{gen-form}
\Phi(x)=aJ(x)b,\, x\in S(\cM),
\end{align}
where $a, b$ are invertible elements in $S(\cN)$ and $J$ is a   linear (or conjugate-linear) Jordan $*$-isomorphism from $\cM$ onto
$\cN$ (which extends to a Jordan $*$-isomorphism from $S(\cM)$  onto $S(\cN)$).
\end{theorem}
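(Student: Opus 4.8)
The plan is to prove the nontrivial direction: every surjective linear rank metric isometry $\Phi\colon S(\cM)\to S(\cN)$ has the form \eqref{gen-form}. The converse is quick — left (resp.\ right) multiplication by an invertible element of $S(\cN)$ is a bijection preserving kernels, hence right supports (resp.\ ranges, hence left supports), so it preserves ${\rm rk}$, and a Jordan $*$-isomorphism $J\colon\cM\to\cN$ is normal and automatically trace-preserving (uniqueness of the trace on a $\mathrm{II}_1$-factor), hence extends to $S(\cM)\to S(\cN)$ and preserves ${\rm rk}$ (which is built from traces of support projections); thus $x\mapsto aJ(x)b$ is a rank isometry. The conjugate-linear case of the theorem reduces to the linear one by replacing $\Phi$ with $x\mapsto\Phi(x)^{*}$, which is $\mathbb{C}$-linear and again a rank isometry since ${\rm rk}(y^{*})={\rm rk}(y)$; so from now on $\Phi$ is linear. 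Putting $y=0$ gives ${\rm rk}(\Phi(x))={\rm rk}(x)$ for all $x$, so $\Phi$ is bijective and $\Phi^{-1}$ is again a surjective linear rank isometry; moreover ${\rm rk}(\Phi(\mathbf{1}))=1$ forces $a_{0}:=\Phi(\mathbf{1})$ to have full left and right support, hence (in the finite setting, via polar decomposition) to be invertible in $S(\cN)$. Replacing $\Phi$ by $\Phi_{1}:=a_{0}^{-1}\Phi$, which is still a surjective linear rank isometry and now unital, we may assume $\Phi(\mathbf{1})=\mathbf{1}$.

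The key structural point is that idempotency is detected by the rank metric. By modularity of the complemented lattice $L(\cM)$ of principal right ideals, with dimension function $D$, one has ${\rm rk}(x)+{\rm rk}(\mathbf{1}-x)=1+D\big(xS(\cM)\cap(\mathbf{1}-x)S(\cM)\big)$ (using $xS(\cM)+(\mathbf{1}-x)S(\cM)=S(\cM)$ and $D(xS(\cM))={\rm rk}(x)$), and since $D$ is faithful, equality ${\rm rk}(x)+{\rm rk}(\mathbf{1}-x)=1$ holds precisely when $S(\cM)=xS(\cM)\oplus(\mathbf{1}-x)S(\cM)$, which forces $x^{2}=x$ (the decomposition $z=xz+(\mathbf{1}-x)z$ must then be the direct-sum decomposition). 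As $\Phi_{1}$ preserves ${\rm rk}$ and fixes $\mathbf{1}$, it maps the idempotents of $S(\cM)$ bijectively onto those of $S(\cN)$; moreover $\Phi_{1}(\mathbf{1}-e)=\mathbf{1}-\Phi_{1}(e)$, so complementary idempotents go to complementary idempotents, and ${\rm rk}(\Phi_{1}(x)+\Phi_{1}(y))={\rm rk}(x+y)$ shows $\Phi_{1}$ also preserves the relation ${\rm rk}(x+y)={\rm rk}(x)+{\rm rk}(y)$ — orthogonality of idempotents, and more generally independence in $L(\cM)$.

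The heart of the argument, and the step I expect to be the main obstacle, is to upgrade idempotent- and independence-preservation to a sandwich form $\Phi_{1}(x)=c\,\rho(x)\,c^{-1}$ with $c\in S(\cN)$ invertible and $\rho\colon S(\cM)\to S(\cN)$ a $\mathbb{C}$-linear ring isomorphism or anti-isomorphism. This is the $\mathrm{II}_{1}$-analogue of the Marcus--Moyls--Hua description of rank-preserving linear maps on matrices, but the finite-dimensional reasoning is unavailable: $S(\cM)$ has no minimal idempotents, so there are no ``rank-one'' elements to track, and a non-hyperfinite $\mathrm{II}_{1}$-factor is not exhausted by finite-dimensional subalgebras, ruling out reduction to $\mathbb{M}_{n}$ by approximation. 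The route I would take is intrinsic to the continuous geometry: from exact rank preservation together with preservation of idempotents and independence, show that $\Phi_{1}$ induces a well-defined dimension-preserving automorphism of the continuous geometry $L(\cM)\cong L(\cN)$ — establishing well-definedness on principal right ideals is itself the delicate part, since linearity controls $\Phi_{1}$ on sums but not on products — then invoke von Neumann's coordinatization theorem to realize this lattice automorphism by a semilinear isomorphism or anti-isomorphism of the coordinate ring $S(\cM)$ onto $S(\cN)$, unique up to an inner automorphism, and finally use the $\mathbb{C}$-linearity of $\Phi_{1}$ to remove the semilinear twist and isolate the inner part, so that $\Phi_{1}={\rm Ad}(c)\circ\rho$.

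It remains to see that $\rho$ restricts to a Jordan $*$-isomorphism of $\cM$ onto $\cN$. Being a ring isomorphism or anti-isomorphism, $\rho$ preserves the Jordan product; the point is that it is $*$-preserving and respects boundedness. For the first, I would use that the involution is recoverable from rank-metric data: $x^{*}$ is characterized by $l(x^{*})=r(x)$, $r(x^{*})=l(x)$ and positivity of $x^{*}x$, positivity of a self-adjoint element being encoded by its spectral projections, which lie in $L(\cM)$; together with the $*$-invariance of ${\rm rk}$ this should force $\rho(x^{*})=\rho(x)^{*}$. A $*$-preserving $\rho$ then preserves positivity and the operator norm, hence maps $\cM$ onto $\cN$, and by Kadison's theorem $J_{0}:=\rho|_{\cM}$ — a Jordan $*$-isomorphism of the factor $\cM$ onto the factor $\cN$ — is a $*$-isomorphism or a $*$-anti-isomorphism, with extension $\rho$ to $S(\cM)$. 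Unwinding the normalization, $\Phi(x)=a_{0}\,\Phi_{1}(x)=a_{0}c\,J_{0}(x)\,c^{-1}$, which is exactly \eqref{gen-form} with $a:=a_{0}c$ and $b:=c^{-1}$, both invertible in $S(\cN)$.
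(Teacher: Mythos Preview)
Your overall architecture---reduce to the unital case, detect idempotents via ${\rm rk}(x)+{\rm rk}(\mathbf{1}-x)=1$, pass to a lattice isomorphism of $L(\cM)$, and coordinatize---is a reasonable plan, but the central step is not carried out and is genuinely the whole difficulty. To make $\Phi_{1}$ induce a well-defined map on principal right ideals you need $xS(\cM)=yS(\cM)\Rightarrow\Phi_{1}(x)S(\cN)=\Phi_{1}(y)S(\cN)$, i.e.\ $y=xa\Rightarrow\Phi_{1}(y)\in\Phi_{1}(x)S(\cN)$. Linearity gives you nothing here, and preservation of idempotents and of the symmetric relation ${\rm rk}(x+y)={\rm rk}(x)+{\rm rk}(y)$ does not control products. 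You acknowledge this is ``the delicate part'' but offer no mechanism; in the paper this is exactly what Lemmas~3.2--3.12 establish by hand, and it is the bulk of the proof.

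The second gap is your argument that the ring (anti-)isomorphism $\rho$ coming from coordinatization is $*$-preserving. Your proposed characterization of $x^{*}$ uses positivity of $x^{*}x$, and you say positivity is ``encoded by spectral projections, which lie in $L(\cM)$''. But $L(\cM)$ is the lattice of principal right ideals, and each such ideal is generated by many idempotents, only one of which is the self-adjoint projection; singling it out already requires the $*$-structure, so the argument is circular. In fact ring automorphisms of $S(\cM)$ are \emph{not} $*$-preserving in general: $x\mapsto axa^{-1}$ with $a$ invertible but non-unitary is a ring automorphism that moves $*$. What is true, and what the paper invokes, is the Ayupov--Kudaybergenov theorem \cite{AK2020}: every ring isomorphism of $S(\cM)$ onto $S(\cN)$ has the form $a\,\Upsilon(\cdot)\,a^{-1}$ with $\Upsilon$ a genuine $*$-isomorphism. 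That inner factor is absorbed into the final $a,b$.

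For comparison, the paper never passes through the lattice or coordinatization. After the unital reduction it shows, via order-preservation of $p\mapsto l(\Phi_{1}(p))$, explicit $2\times2$ Peirce computations, and the Goldstein--Paszkiewicz representation of self-adjoints by projections, that for every projection $e$ one has $\Phi_{1}(eS(\cM))=l(\Phi_{1}(e))S(\cN)$ (or the transposed statement, consistently). This yields that $xy=0$ implies $\Phi_{1}(x)\Phi_{1}(y)=0$ (or always the reverse), and then Bre\v{s}ar's theorem on zero-product-preserving maps over $\mathbb{M}_{2}(eS(\cM)e)$ forces $\Phi_{1}$ or $\Phi_{1}^{*}$ to be multiplicative; the $*$-structure is then supplied by \cite{AK2020}, not recovered from the rank metric.
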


The proof of Theorem~\ref{isometry} 
consists of $4$ main ingredients: 
 (1) the fact that  a rank metric isometry sends  left ideals onto either left or right ideals of $S(\cN)$, see Lemma \ref{leri} below;
(2) a representation of self-adjoint elements of II$_1$-factors as a finite linear combinations of projections~\cite{GP1992};
(3) a characterization of disjointness-preserving additive mappings on matrix algebras~\cite{Bresar07}; 
(4) a general form of ring isomorphism between II$_1$-factors~\cite{AK2020} (see also  \cite{MMori2023}).

\section{Prelinimaries}

\subsection{Murray--von Neumann algebra}\label{sub21}

Let  $H$ be  a complex Hilbert space    and  let   $B(H)$ be   the $\ast$-algebra of all bounded linear operators
on $H.$ Let $\cM$ be a von Neumann algebra in $B(H)$.
As usual we denote by $P(\cM)$ the set of all projections in $\cM.$
Two projections $e, f \in  P(\cM)$ are called \textit{equivalent}  (denoted as $e\sim f$) if there is  an element
$u \in \cM$ such that $u^\ast  u = e$ and $u u^\ast  = f.$
For projections $e, f \in  \cM$
notation $e \precsim  f$ means that there exists a projection $q \in  \cM$ such that
$e\sim q \leq f.$

Recall that a densely defined closed linear operator $x : \textrm{dom}(x) \to  H$
(here the domain $\textrm{dom}(x)$ of $x$ is a dense linear subspace in $H$) is said to be \textit{affiliated} with $\cM$
if $yx \subset  xy$ for all $y$ from the commutant $\cM'$  of the algebra $\cM$\cite{BCLSZ,DPS}.

A linear operator $x$ affiliated with $\cM$ is called \textit{measurable} with respect to $\cM$, if
$e_{(\lambda,\infty)}(|x|)$ is a finite projection for some $\lambda>0.$ Here
$e_{(\lambda,\infty)}(|x|)$ is the  spectral projection of $|x|$ corresponding to the interval $(\lambda, +\infty).$
We denote the set of all measurable operators by $S(\cM)$\cite{BCLSZ,DPS}.

Let $x, y \in  S(\cM).$ It is well known that $x+y$ and
$xy$ are densely-defined and preclosed
operators. Moreover, the closures of 
operators $x + y, xy$ and $x^\ast$  are also in $S(\cM).$
When
equipped with these operations, $S(\cM)$ becomes a unital $\ast$-algebra over $\mathbb{C}$\cite{Segal,BCLSZ,DPS}.
It
is clear that $\cM$  is a $\ast$-subalgebra of $S(\cM).$
In the case of finite von Neumann algebra $\cM$, all operators affiliated with $\cM$ are measurable and the algebra $S(\cM)$ is referred to as the \emph{Murray--von Neumann algebra} associated with $\cM$ (see \cite{KL, BCLSZ}).

Let $\tau$ be a faithful normal finite trace on $\cM.$
Consider the topology  $t_\tau$ of convergence in measure or \textit{measure topology} \cite{BCLSZ,DPS}
on $S(\cM),$ which is defined by
the following neighborhoods of zero:
$$
N(\varepsilon, \delta)=\{x\in S(\cM): \exists \, e\in P(\cM), \, \tau(\mathbf{1}-e)\leq\delta, \, xe\in
\cM, \, \left\|xe\right\|_\cM \leq\varepsilon\},
$$
where $\varepsilon, \delta$
are positive numbers. The pair $(S(\cM), t_\tau)$ is a complete topological $\ast$-algebra.

Let $\cM$ be a finite von Neumann algebra. A $\ast$-subalgebra  $\cA$ of $S(\cM)$ is said to be
{\it regular}, if it is a regular ring in the sense of von Neumann, i.e., if for every
$a\in\cA$ there exists an element  $x\in\cA$ such that $axa=a$ (see \cite{Berber, G79}).
Let $x\in S(\cM)$  and let  $x=v|x|$ be the polar decomposition of the element $x.$
Then $l(x) =v v^\ast$ and  $r(x)=v^\ast v$ are left and right supports of the element  $x,$ respectively.
The projection  $s(x)=l(x)\vee r(x)$ is the support of the element $x.$
It is clear that $r(x)=s(|x|)$ and  $l(x)=s(|x^\ast|).$
There is a unique element $i(x)$ in
$S(\cM)$ such that 
\begin{align}\label{i(x)}
xi(x)=l(x),\ i(x)x=r(x),\ xi(x)x=x,
i(x)l(x)=i(x),\,  r(x)i(x)=i(x).
\end{align}
The element  $i(x)$ is called the \emph{partial inverse} of the element $x.$
In particular, an element 
\begin{align}\label{invertibility}
x\in S(\cM) \,\, \mbox{is invertible if and only if} \,\, r(x)=\mathbf{1}.
\end{align} 

Therefore,   the algebra $S(\cM)$ itself is a regular $*$-algebra   (see also \cite[(VI), page 89]{vN60}, \cite[Theorem~4.3]{Saito}).

Below, we  collect    fundamental properties of left and right projections.

For an element $x\in S(\cM),$  we have  $l(x)\sim r(x).$

For  any $a, b \in S(\cM),$ we have
\begin{align*}
l(axb) & \le l(ax)\sim r(ax)\le r(x)\sim l(x),
\end{align*}
and thus,
\begin{align}\label{lx}
l(axb) & \precsim l(x).
\end{align}
If $a,b$ are invertible, then
\begin{align*}
l(x) & = l\left(a^{-1}(axb)b^{-1}\right) \stackrel{\eqref{lx}}\precsim l\left(axb\right)\stackrel{\eqref{lx}}\precsim l(x),
\end{align*}
which implies
\begin{align}\label{laxb}
l\left(axb\right) & \sim l(x).
\end{align}
In particular, if $a,b$ are invertible, then
\begin{align}\label{llrr}
l\left(xb\right) = l(x) ~\mbox{ and }~ r\left(ax\right) = r(x).
\end{align}
Moreover, note that
\begin{align}\label{a+b}
l(x+y) & \le l(x)\vee l(y).
\end{align}

\subsection{Rank metric on Murray--von Neumann algebras}
Let $\cM$ be a  finite von Neumann algebra  equipped with   a faithful normal finite trace $\tau_\cM$.
Recall that  the so-called rank metric $\rho_{S(\cM)}$ on $S(\cM)$  by setting
\begin{align*}%\label{rank}
\rho_{S(\cM)}(x, y)=\tau_\cM( r(x-y))=\tau_\cM(l(x-y)),\,\, x, y\in S(\cM).
\end{align*}
By \cite[Theorem]{McP} (see also \cite[Theorem 17.4]{vN60}), the ring $S(\cM)$ equipped with the metric $\rho_{S(\cM)}$ is a
complete topological $*$-ring.

Define a rank function on $S(\cM)$ as follows
\begin{align}\label{xc}
\left\|x\right\|_S & = \rho_{S(\cM)}(x,0)=\tau_\cM(l(x))=\tau_\cM(r(x)),\, x \in S(\cM).
\end{align}
We have (see \cite[p. 231]{vN60} and \cite[p. 1649]{SchIMRN})
\begin{enumerate}
\item $\left\|x\right\|_S=0$ if and only if $x=0;$
\item $\left\|x+y\right\|_S\le \left\|x\right\|_S + \left\|y\right\|_S$ for all $x, y\in S(\cM);$
\item $\left\|xy\right\|_S \le \min\{\left\|x\right\|_S, \left\|y\right\|_S\}$ for all $x,y \in S(\cM);$
\item $\left\|p+q\right\|_S=\left\|p\right\|_S+\left\|q\right\|_S$ for all $p,q \in P(\cM)$ with $pq=0.$
\end{enumerate}
Note that $\left\|\lambda x\right\|_S=\left\|x\right\|_S$ for all $x\in S(\cM)$ and non-zero $\lambda \in \mathbb{C}.$ Thus
scalar multiplication is not continuous in the rank metric topology,
in particular, $\left(S(\cM), \left\|\cdot\right\|_S\right)$ is not a topological algebra.
The rank metric was first introduced for regular rings in \cite{Neu37}, where its metric properties were established.

\section{Proof of Theorem \ref{isometry}}

Let $\cA$ and $\cB$ be associative $*$-algebras.
Recall that a {\it linear or conjugate-linear} bijection $\Psi:\cA\to \cB$ is called
\begin{itemize}
\item[--] an isomorphism, if $\Psi(xy)=\Psi(x)\Psi(y)$ for all $x,y \in \cA$; 
 \item[--] an anti-isomorphism, if $\Psi(xy)=\Psi(y)\Psi(x) $  for all $x,y \in \cA$;
 \item[--] a Jordan isomorphism, if $\Psi(x^2)=\Psi(x)^2$ for all $x,y \in \cA$. 
\end{itemize}
It is clear that any isomorphism or anti-isomorphism is a Jordan isomorpism.
If $\Psi(x^*)=\Psi(x)^*$ for all $x\in \cA,$ then $\Psi$ is called a 
$*$-isomorphism, $*$-anti-isomorphism and Jordan $*$-isomorphism, respectively.

Let $\Phi$ be a mapping defined by \eqref{gen-form}. Then,  $\Phi$ is invertible and
\begin{align*}
\Phi^{-1}(x)=J^{-1}(a^{-1})J^{-1}(x)J^{-1}(b^{-1}),\, x\in S(\cN)
\end{align*}
or
\begin{align*}
\Phi^{-1}(x)=J^{-1}(b^{-1})J^{-1}(x)J^{-1}(a^{-1}),\, x\in S(\cN),
\end{align*}
depending on $J$ being isomorphism or anti-isomorphism. Furthermore, $\Phi^{-1}$ is also of the  form  \eqref{gen-form}.

Below, we always assume that  $\cM$ and $\cN$ are II$_1$-factors equipped with faithful normal tracial states $\tau_\cM$ and $\tau_\cN$ respectively.  
 Since both $\cM$ and $\cN$ are II$_1$-factors, it follows that any Jordan $*$-isomorphism from
 $S(\cM)$ onto $S(\cN)$ (which maps $\cM$ onto $\cN$) is either $*$-isomorphism or $*$-anti-isomorphism
 (see \cite[Proposition 3.1]{BHS24}). Furthermore, it preserves traces of projections\footnote{ Let $\tau_\cM$ and $\tau_\cN$ be faithful normal tracial states on $\cM,$ $\cN,$ respectively, and $\Psi$ be a Jordan  $*$-isomorphism from $\cM$ onto $\cN.$
Then $\tau_\cN\circ \Psi$ is also tracial state on $\cM.$ By the uniqueness of tracial state, we obtain  $\tau_\cN\circ \Psi=\tau_\cM.$
 Hence, $\Psi$  preserves the traces of projections.}, whence,
\begin{eqnarray*}
\left\|J(x)\right\|_S & \stackrel{\eqref{xc}}{=} & \tau_\cN(l(J(x)))= \tau_\cN(l(J(l(x)x))) = \tau_\cN(l(J(l(x))J(x)))\\
& \le& \tau_\cN(l(J(l(x))))=\tau_\cN(J(l(x)))=\tau_\cM(l(x))=\left\|x\right\|_S, 
\end{eqnarray*}
that is, $\left\|J(x)\right\|_S\le \left\|x\right\|_S$ for all  $x \in S(\cM).$
On the other hand, we have 
\begin{align*}
\left\|x \right\|_S & =\left\|J^{-1}(J(x))\right\|_S \le \left\|J(x)\right\|_S,
\end{align*}
that is,  $\left\|J(x)\right\|_S=\left\|x\right\|_S$ for all $x\in S(\cM).$

Let $a\in S(\cM)$ be an invertible element. Taking into account \eqref{laxb}, we obtain that both the left and right multiplication operators
\begin{align}\label{LR}
L_a(x)=ax~\mbox{ and }~ R_a(x)=xa, ~\forall  x\in S(\cM)
\end{align}
are rank metric isometries of $S(\cM).$
So, each mapping  $\Phi$ of the form~\eqref{gen-form} is a linear rank metric isometry.
It should be noted that if $\Phi$ is a linear rank metric isometry, then the mapping $\Phi^*$ defined as follows
\begin{align}\label{conjugate}
\Phi^*(x) & =\Phi(x)^*,\, x \in S(\cM)
\end{align}
is a conjugate-linear rank metric isometry.

 By~\eqref{invertibility}, an element $x\in S(\cM)$ is invertible if and only if
$r(x)=\mathbf{1}.$ Equivalently, invertibility of
$x$ is characterized by the condition $\|x\|_S=1.$
A fundamental property of rank metric isometries is that they preserve invertibility; that is, they map invertible elements to invertible elements. This yields the following essential property of rank metric isometries, which will be extensively utilized in this section.

\begin{lemma}\label{invert}
Let $\Phi:S(\cM) \to S(\cN)$ be a rank metric isometry and let $x\in S(\cM).$ Then,
$\Phi(x)$ is invertible if and only if $x$ is invertible.
\end{lemma}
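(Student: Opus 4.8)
The plan is to use the characterization of invertibility given in \eqref{invertibility} together with the rank-function description of the metric in \eqref{xc}. Recall that an element $x\in S(\cM)$ is invertible precisely when $r(x)=\mathbf{1}$, which by \eqref{xc} is equivalent to $\left\|x\right\|_S=\rho_{S(\cM)}(x,0)=\tau_\cM(r(x))=1$ (here we use that $\tau_\cM$ is a \emph{normal} trace and $P(\cM)$ is complete, so $\tau_\cM(r(x))=1$ forces $r(x)=\mathbf{1}$, using faithfulness). So invertibility of $x$ is detected purely by the value of the rank distance from $x$ to $0$, and the only obstacle is that a rank metric isometry need not fix $0$.

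To deal with that, first I would reduce to the case $\Phi(0)=0$. Set $c=\Phi(0)\in S(\cN)$ and consider $\widetilde\Phi=R_{?}$... more simply, consider $\widetilde\Phi(x)=\Phi(x)-c$. Since $\widetilde\Phi$ is just a translation of $\Phi$, it is again a rank metric isometry: $\rho_{S(\cN)}(\widetilde\Phi(x),\widetilde\Phi(y))=\tau_\cN(r((\Phi(x)-c)-(\Phi(y)-c)))=\tau_\cN(r(\Phi(x)-\Phi(y)))=\rho_{S(\cM)}(x,y)$, and now $\widetilde\Phi(0)=0$. Then for every $x$,
\begin{align*}
\left\|\widetilde\Phi(x)\right\|_S=\rho_{S(\cN)}(\widetilde\Phi(x),\widetilde\Phi(0))=\rho_{S(\cM)}(x,0)=\left\|x\right\|_S.
\end{align*}
Thus $\widetilde\Phi$ is norm-preserving for $\left\|\cdot\right\|_S$, and in particular $\left\|\widetilde\Phi(x)\right\|_S=1$ if and only if $\left\|x\right\|_S=1$, i.e., $\widetilde\Phi(x)$ is invertible if and only if $x$ is. Finally I would transfer this back to $\Phi$ itself: one cannot in general, since $\Phi$ and $\widetilde\Phi$ differ by a constant; but the statement of the lemma as used in the paper evidently concerns isometries $\Phi$ that are linear (or conjugate-linear), so $\Phi(0)=0$ automatically and $\Phi=\widetilde\Phi$. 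Hence the reduction is vacuous and the proof is simply: $\Phi$ preserves $\left\|\cdot\right\|_S$, and $x$ is invertible $\iff\left\|x\right\|_S=1\iff\left\|\Phi(x)\right\|_S=1\iff\Phi(x)$ is invertible.

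The only genuinely substantive point to nail down is the equivalence ``$x$ invertible $\iff\left\|x\right\|_S=1$''. The forward direction is \eqref{invertibility} together with $\tau_\cM(\mathbf{1})=1$. For the converse, I would argue: $\tau_\cM(r(x))=1$ gives $\tau_\cM(\mathbf{1}-r(x))=0$, and since $\tau_\cM$ is faithful and $\mathbf{1}-r(x)\in P(\cM)$, this forces $r(x)=\mathbf{1}$, so $x$ is invertible by \eqref{invertibility}. I expect no real obstacle here; the lemma is essentially a restatement of \eqref{invertibility} and \eqref{xc} combined with the fact, established just above in the excerpt, that a linear rank metric isometry preserves $\left\|\cdot\right\|_S$ (the same computation $\left\|\Phi(x)\right\|_S=\rho_{S(\cN)}(\Phi(x),\Phi(0))=\rho_{S(\cN)}(\Phi(x),0)=\rho_{S(\cM)}(x,0)=\left\|x\right\|_S$). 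The main ``obstacle'' is purely bookkeeping: making sure the isometry is genuinely norm-preserving, which is immediate once $\Phi(0)=0$.
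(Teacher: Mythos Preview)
Your proof is correct and matches the paper's approach exactly: the paper states (just before the lemma) that invertibility of $x$ is characterized by $\|x\|_S=1$, and the lemma follows immediately since a linear rank metric isometry preserves $\|\cdot\|_S$. Your digression about reducing to $\Phi(0)=0$ is unnecessary (as you yourself note), but your justification of the equivalence via faithfulness of $\tau_\cM$ is a bit more explicit than the paper's.
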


We say a bijection \(\phi:P(\mathcal{M})\to P(\mathcal{N})\) is order-preserving if
$
\phi(p) \le \phi(q)
$
for all $p,q \in P(\cM)$ with $p\le q.$

\begin{lemma}\label{exist}
Let $\Phi:S(\cM)\to S(\cN)$ be a  surjective  rank metric isometry.
Then the mappings $\phi, \varphi : P(\cM) \to P(\cN)$ defined by
\begin{align}\label{Psi}
\phi(p)= l\left(\Phi(p)\right),\,  p\in P(\cM),
\end{align}
and
\begin{align}\label{Psir}
\varphi(p)= r\left(\Phi(p)\right),\,  p\in P(\cM),
\end{align}
are order-preserving.
\end{lemma}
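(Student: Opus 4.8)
The plan is to show that $\phi$ is order-preserving; the statement for $\varphi$ then follows by applying the result to the conjugate-linear isometry $\Phi^*$ of \eqref{conjugate}, since $r(\Phi(p))=l(\Phi(p)^*)=l(\Phi^*(p))$. First I would record what $\phi$ does to the two extreme projections: $\phi(\mathbf{1})=l(\Phi(\mathbf{1}))=\mathbf{1}$ because $\Phi(\mathbf{1})$ is invertible by Lemma~\ref{invert}, and $\phi(0)=l(\Phi(0))=l(0)=0$. Next I would establish that $\phi$ is \emph{trace-preserving} on projections: for $p\in P(\cM)$ one has $\|\Phi(p)\|_S=\|p\|_S$ since $\Phi$ is an isometry, and $\|\Phi(p)\|_S=\tau_\cN(l(\Phi(p)))=\tau_\cN(\phi(p))$ while $\|p\|_S=\tau_\cM(p)$, so $\tau_\cN(\phi(p))=\tau_\cM(p)$. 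The same computation applied to $\mathbf{1}-p$ gives $\tau_\cN(\phi(\mathbf{1}-p))=\tau_\cM(\mathbf{1}-p)=1-\tau_\cM(p)=1-\tau_\cN(\phi(p))=\tau_\cN(\mathbf{1}-\phi(p))$.

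The heart of the argument is to exploit invertibility. Suppose $p\le q$ in $P(\cM)$, so that $q-p$ is again a projection with $(q-p)\perp(\mathbf{1}-q)$ and $(q-p)\perp p$. The key observation is a criterion for when a sum of two "disjoint pieces" coming from $\Phi$ remains controlled in rank. Concretely, consider the element $\Phi(\mathbf{1}-q+p)=\Phi(\mathbf{1})-\Phi(q-p)$; I want to compare its left support with $\phi(\mathbf{1}-q+p)$. Since $\mathbf{1}-q+p=\mathbf{1}-(q-p)$ and $q-p$ is a projection, $\mathbf{1}-q+p$ is a projection too, so applying the trace identity above, $\tau_\cN(\phi(\mathbf{1}-q+p))=\tau_\cM(\mathbf{1}-q+p)=1-\tau_\cM(q-p)$. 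The plan is then to use \eqref{a+b}: since $\Phi$ is linear, $\Phi(p)=\Phi(q)-\Phi(q-p)$, hence
\begin{align*}
\phi(p)=l(\Phi(p))=l(\Phi(q)-\Phi(q-p))\le l(\Phi(q))\vee l(\Phi(q-p))=\phi(q)\vee\phi(q-p).
\end{align*}
Symmetrically $\phi(q)=l(\Phi(p)+\Phi(q-p))\le\phi(p)\vee\phi(q-p)$, and likewise $\phi(q-p)\le\phi(p)\vee\phi(q)$. Now I would compute traces: by subadditivity of $\|\cdot\|_S$ (property (2)) together with the trace identities, $\tau_\cN(\phi(q))\le\tau_\cN(\phi(p))+\tau_\cN(\phi(q-p))$ turns into the \emph{equality} $\tau_\cM(q)=\tau_\cM(p)+\tau_\cM(q-p)$, forcing the join $\phi(p)\vee\phi(q-p)$ to have trace exactly $\tau_\cN(\phi(p))+\tau_\cN(\phi(q-p))$, i.e. $\phi(p)$ and $\phi(q-p)$ are orthogonal. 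Combined with $\phi(p)\le\phi(p)\vee\phi(q-p)$ and the reverse containment derived above, one gets $\phi(p)\vee\phi(q-p)=\phi(q)\vee\phi(q-p)$, and then intersecting with the orthocomplement of $\phi(q-p)$ yields $\phi(p)\le\phi(q)$.

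The main obstacle I anticipate is justifying that the various joins split additively in trace; for this one needs to know the relevant left supports are \emph{mutually orthogonal}, and the only leverage available is the isometry property encoded through \eqref{xc} and properties (2),(4) of $\|\cdot\|_S$ together with \eqref{a+b}. The subtlety is that \eqref{a+b} only gives $l(x+y)\le l(x)\vee l(y)$, an inequality, whereas we need the trace of $l(\Phi(p))\vee l(\Phi(q-p))$ to be exactly additive; the route is to bound $\|\Phi(p)+\Phi(q-p)\|_S=\|\Phi(q)\|_S$ above by $\|\Phi(p)\|_S+\|\Phi(q-p)\|_S$ and below by observing $\tau_\cN\bigl(l(\Phi(p))\vee l(\Phi(q-p))\bigr)\ge\max$, then using that the two outer quantities coincide with $\tau_\cM(q)$ by the trace identity to squeeze everything, finally invoking that in a finite von Neumann algebra $\tau(e\vee f)=\tau(e)+\tau(f)$ holds iff $ef=0$. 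Once orthogonality of $\phi(p)$ and $\phi(q-p)$ is in hand, the order relation $\phi(p)\le\phi(q)$ drops out by an elementary lattice manipulation, and the case of $\varphi$ is immediate from the conjugate-linear version.
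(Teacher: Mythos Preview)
Your overall strategy coincides with the paper's: show that for orthogonal projections $p,r\in P(\cM)$ one has $\phi(p+r)=\phi(p)\vee\phi(r)$ by combining the containment $\phi(p+r)\le\phi(p)\vee\phi(r)$ from \eqref{a+b} with the trace chain
\[
\tau_\cN(\phi(p+r))\le\tau_\cN(\phi(p)\vee\phi(r))\le\tau_\cN(\phi(p))+\tau_\cN(\phi(r))=\tau_\cM(p)+\tau_\cM(r)=\tau_\cM(p+r)=\tau_\cN(\phi(p+r)),
\]
and then apply this with $r=q-p$ for $p\le q$. Your reduction of $\varphi$ to $\Phi^*$ is also exactly what the paper does.

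However, your final lattice manipulation contains a genuine error. You assert that in a finite von Neumann algebra $\tau(e\vee f)=\tau(e)+\tau(f)$ holds iff $ef=0$. This is false: by the Kaplansky formula it is equivalent only to $e\wedge f=0$, which does \emph{not} imply $ef=0$ (take two distinct rank-one projections in $\mathbb{M}_2$). Consequently your step ``intersecting with the orthocomplement of $\phi(q-p)$'' breaks down, since the modular-law computation $(\phi(p)\vee\phi(q-p))\wedge\phi(q-p)^\perp=\phi(p)$ requires $\phi(p)\le\phi(q-p)^\perp$, i.e.\ genuine orthogonality, which you have not established.

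The fix is to bypass this detour entirely, as the paper does. You already have $\phi(q)\le\phi(p)\vee\phi(q-p)$ and, from the trace chain above, $\tau_\cN(\phi(q))=\tau_\cN(\phi(p)\vee\phi(q-p))$. Since $\cN$ is finite, a projection inequality with equal traces forces equality, so $\phi(q)=\phi(p)\vee\phi(q-p)\ge\phi(p)$ directly. No orthogonality claim and no lattice intersection are needed.
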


\begin{proof} 
Firstly, we observe that  
\begin{align}\label{phip+q}
\phi(p+q) & \le \phi(p)\vee\phi(q).
\end{align}
 for any pair of orthogonal projections $p,q\in P(\cM)$.
Indeed, by  the definition of $\phi$, we have
\begin{align*}
\phi(p+q) & = l\left(\Phi(p+q)\right)=l\left(\Phi(p)+\Phi(q)\right) \stackrel{\eqref{a+b}}{\le} l(\Phi(p))\vee l(\Phi(q))\stackrel{\eqref{Psi}}{=}\phi(p)\vee\phi(q),
\end{align*}
which proves \eqref{phip+q}. 
 
Since $\Phi$ is a rank metric isometry, it follows that 
\begin{align}\label{taup+q}
\tau_\cN(\phi(p+q))  \stackrel{\eqref{Psi}}{=} \tau_\cN\left(l\left(\Phi(p+q)\right)\right)\stackrel{\eqref{xc}}{=}\left\|\Phi(p+q)\right\|_S=\left\|p+q\right\|_S\stackrel{\eqref{xc}}{=}\tau_\cM(p+q)
\end{align}
and
\begin{eqnarray*}
\tau_\cN(\phi(p)\vee \phi(q)) & \stackrel{\eqref{Psi}}{=}&\tau_\cN\left(l(\Phi(p))\vee l(\Phi(q))\right) \\
&\stackrel{\mbox{\tiny \cite[Prop.1.15.9]{DPS}}}{\le}&
\tau_\cN\left(l(\Phi(p))\right)+ \tau_\cN\left(l(\Phi(q))\right) \\
&\stackrel{\eqref{xc}}{=}&  \left\|\Phi(p)\right\|_S+\left\|\Phi(q)\right\|_S\\
& =& \left\| p \right\|_S+\left\| q \right\|_S\\
&\stackrel{\eqref{xc}}{=}&\tau_\cM(p)+\tau_\cM(q)=\tau_\cM(p+q),
\end{eqnarray*}
i.e., 
\begin{align}\label{tp+q}
\tau_\cN(\phi(p)\vee \phi(q)) & \le \tau_\cM(p+q).
\end{align}
Combining \eqref{phip+q}, \eqref{taup+q} and \eqref{tp+q}, we conclude
\begin{align}\label{p+q=}
\tau_\cN(\phi(p+q)) & =\tau_\cN(\phi(p)\vee \phi(q)).
\end{align}
Since $\cN$ is finite, it follows from  \eqref{phip+q} and \eqref{p+q=} that 
\begin{align}\label{p+q+r}
\phi(p+q) & = \phi(p)\vee \phi(q).
\end{align}
In particular,  we have 
\begin{align*}
\phi(p) & \le  \phi(p+q).
\end{align*}

Now,  for any projections 
$e, f \in P(\cM)$   such that $e\le f$, 
we have 
\begin{align*}
\phi(e)   \le  \phi(e+f-e) = \phi(f),
\end{align*}
which proves that $\phi$ is order-preserving.

For the order-preserving property of $\varphi,$ consider the mapping $\Phi^*$ defined by \eqref{conjugate}. For projections $e\le f,$  we have
\begin{align*}
\varphi(e) \stackrel{\eqref{Psir}}{=} r(\Phi(e)) \stackrel{\eqref{conjugate}}{=} l(\Phi^*(e))\stackrel{\eqref{Psi}}{\le} l(\Phi^*(f))=r(\Phi(f))=\varphi(f).
\end{align*}
The proof is complete.
\end{proof}

\begin{remark}\label{p+q} By~\eqref{taup+q} and~\eqref{p+q=}, we obtain that 
$$
\tau_\cN(\phi(p)\vee \phi(q))=\tau_\cM(p)+\tau_\cM(q)=\tau_\cM(\phi(p))+\tau_\cN(\phi(q)).
$$
By the Kaplansky formula\cite[Theorem 6.1.7]{KR2}, we have 
the relation
$\phi(p)\vee \phi(q)-\phi(p)\sim \phi(q)-\phi(p)\wedge \phi(q),$ 
which implies that that
$\tau_\cN(\phi(p)\wedge \phi(q))=0$ (see e.g. \cite[Proposition 1.15.9]{DPS}). 
Consequently, by the faithfulness of $\tau_\cN$, we have  $$\phi(p)\wedge \phi(q)=0$$
for any pair of orthogonal projections $p,q \in P(\cM).$
\end{remark}

Consider orthogonal projections $p,q\in P(\cM)$ with $p+q=\mathbf{1}_\cM.$ The Peirce decomposition\cite{Peirce}  of $S(\cM)$ is given by\footnote{Recall that $yS(\cM)z=\left\{yxz\in S(\cM): x\in S(\cM)\right\},$ where $y, z \in S(\cM)$.}:
\begin{align*}
S(\cM) & = pS(\cM) p\oplus pS(\cM) q\oplus qS(\cM) p\oplus qS(\cM) q,
\end{align*}
in particular, any element $x=px p+px q +qxp+qxq\in S(\cM)$ has a unique matrix representation
\begin{align*}
x & = \begin{pmatrix}
pxp & pxq\\
qxp & qxq
\end{pmatrix}.
\end{align*}

Let $e\in S(\cM)$ be an idempotent, i.e., $e^2=e.$ Below we need the following important property (see for details in \cite{AK2020}):
\begin{align}\label{e+u}
e = p + u,
\end{align}
where 
\begin{align}\label{puq}
u\in pS(\cM)q,\,\, p=l(e)\,\, \mbox{and}\,\,   q=\mathbf{1}_\cM-p.
\end{align}
Observe that  $\mathbf{1}_\cM-e$ is also an idempotent. Using the matrix representation of $e$, we have 
 \begin{align}\label{left-right}
e & = \begin{pmatrix}
p & u\\
0 & 0
\end{pmatrix}~\mbox{ and }~  \mathbf{1}_\cM-e = \begin{pmatrix}
0 & -u\\
0 & q
\end{pmatrix}.
\end{align}
Since $(\mathbf{1}_\cM-e)q\stackrel{\eqref{left-right}}{=}\mathbf{1}_\cM-e,$ it follows that 
$r(\mathbf{1}_\cM-e)\le q.$ On the other hand,
\begin{align*}
q & \stackrel{\eqref{left-right}}{=} q(\mathbf{1}_\cM-e)=q(\mathbf{1}_\cM-e)r(\mathbf{1}_\cM-e)\stackrel{\eqref{left-right}}{=} qr(\mathbf{1}_\cM-e).    
\end{align*}
Thus, $q\le r(\mathbf{1}_\cM-e),$ and hence, $r(\mathbf{1}_\cM-e)=q.$ So,
\begin{align}\label{lr1-}
r(\mathbf{1}_\cM-e)=q=\mathbf{1}_\cM-p=\mathbf{1}_\cM-l(e).
\end{align}

Now let $x\in S(\cM)$ with  $p=l(x)$ and $q=r(x).$
Since $p\sim q$ and $\cM$ is a II$_1$-factor, it follows that
$\mathbf{1}_\cM-p\sim \mathbf{1}_\cM-q$~\cite[Theorem 10.11.12]{Davidson}. Take a partial isometry $w\in \cM$ such that
$\mathbf{1}_\cM-p=w^* w$ and $\mathbf{1}_\cM-q=ww^*.$ Set
$a:=w+i(x),$ where $i(x)$ is the partial inverse of $x.$
We have
\begin{align*}
(w^*+x)(w+i(x)) & = w^*w+w^*i(x)+xw+xi(x)\\
& =\mathbf{1}_\cM-p +w^*ww^* qi(x)+xqww^*w+p\\
& = \mathbf{1}_\cM+ w^*(\mathbf{1}_\cM-q)qi(x)+xq(\mathbf{1}_\cM-q)w=\mathbf{1}_\cM.
\end{align*}
Taking \eqref{lx} into account, we obtain  $l(w+i(x))=r(w+i(x))=\mathbf{1}_\cM,$ and therefore, $a$ is invertible.
Moreover, $xa=xw+xi(x)=xqww^*w+p=p.$ Similarly, we can find an invertible element $b$ such that $bx=q.$
Therefore, there exist invertible elements $a, b$ satisfying: 
\begin{align}\label{invl-r}
xa = l(x)~\mbox{ and }~ bx =r(x).
\end{align}

From now on, we
always  assume that $\Phi:S(\cM)\to S(\cN)$ is a  unital surjective
linear (or conjugate-linear) rank metric isometry (in  Lemmas~\ref{partial}--\ref{zeroproduct}).  In particular,~$\Phi(\mathbf{1}_\cM)=\mathbf{1}_\cN.$

\begin{lemma}\label{partial}
Let $u\in \cM$ be a partial isometry with initial projection $e=uu^*$ and final projection $f=u^*u$. Then, 
$\Phi$ maps $eS(\cM)f$ onto $pS(\cN)q,$ where
$p=l\left(\Phi(u)\right)$ and $q=r\left(\Phi(u)\right).$
\end{lemma}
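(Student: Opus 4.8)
The goal is to show that the rank metric isometry $\Phi$ restricts to a bijection from the Peirce corner $eS(\cM)f$ onto $pS(\cN)q$, where $p=l(\Phi(u))$ and $q=r(\Phi(u))$. The strategy is to identify $eS(\cM)f$ intrinsically, within the ring $S(\cM)$, by a property that is manifestly preserved by a rank metric isometry, then transport that property through $\Phi$. The natural candidate is the characterization of $eS(\cM)f$ via one-sided invertibility, or more precisely via the behaviour of left/right supports combined with the order-preserving maps $\phi,\varphi$ of Lemma~\ref{exist}.

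\textbf{Step 1: describe $eS(\cM)f$ by supports.}
For $x\in S(\cM)$ we have $x\in eS(\cM)f$ iff $l(x)\le e$ and $r(x)\le f$. I would first record that the element $u+(\mathbf{1}_\cM-e)$ (mapping $f^\perp\to e^\perp$ via a partial isometry of the complementary projections, exactly as in the construction preceding~\eqref{invl-r}) is invertible, and that $x\in eS(\cM)f$ precisely when $x+(\mathbf{1}_\cM-e)$ has left support $\le e\vee (\mathbf{1}_\cM-e)$ in a controlled way — or, more cleanly, use that for an idempotent-type perturbation the element $x$ lies in $eS(\cM)f$ iff both $x+(\mathbf1_\cM - f)$ is invertible "on the $f$-side" and symmetrically on the $e$-side. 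In fact the cleanest formulation uses~\eqref{e+u}: idempotents $e'$ with $l(e')=e$ are exactly elements of the form $e+v$ with $v\in eS(\cM)(\mathbf1_\cM-e)$; combined with the partial isometry $w$ identifying $\mathbf1_\cM-e$ with $\mathbf1_\cM-f$, one gets a bijective affine correspondence between $eS(\cM)f$ and a set of invertible elements, namely $x\mapsto (w^\ast+x)$ type expressions as in~\eqref{invl-r}. Since $\Phi$ is linear (affine) and preserves invertibility by Lemma~\ref{invert}, it carries this family to the analogous family in $S(\cN)$.

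\textbf{Step 2: pin down the target corner.}
Apply Step~1 with $\Phi$. Since $\Phi(\mathbf1_\cM)=\mathbf1_\cN$ and $\Phi$ is a rank metric isometry, $\Phi(u)$ is a partial isometry in $S(\cN)$ up to the support data: $l(\Phi(u))=p$, $r(\Phi(u))=q$, and $\tau_\cN(p)=\|\Phi(u)\|_S=\|u\|_S=\tau_\cM(e)$, similarly for $q$. Using Lemma~\ref{exist} (order-preservation of $\phi,\varphi$) together with Remark~\ref{p+q} (disjointness of images of orthogonal projections), one sees $\phi(e)=p$, $\phi(\mathbf1_\cM-e)=\mathbf1_\cN-p$ and likewise $\varphi(f)=q$, $\varphi(\mathbf1_\cM-f)=\mathbf1_\cN-q$, so the complementary corner structure is matched. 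Then the invertibility criterion transported from Step~1 forces $\Phi(eS(\cM)f)\subseteq pS(\cN)q$: an element $x\in eS(\cM)f$ sits inside the affine family of invertibles, so $\Phi(x)$ does too on the $\cN$-side, which by the $\cN$-version of Step~1 means $l(\Phi(x))\le p$ and $r(\Phi(x))\le q$.

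\textbf{Step 3: surjectivity onto $pS(\cN)q$.}
Apply the same argument to $\Phi^{-1}$, which is again a unital surjective rank metric isometry (it satisfies the hypotheses of this batch of lemmas), and to the partial isometry $\Phi(u)$ — or rather to a genuine partial isometry in $\cN$ with the same supports $p,q$, noting $\Phi^{-1}$ sends the corresponding corner $pS(\cN)q$ into $eS(\cM)f$. Combining the two inclusions gives $\Phi(eS(\cM)f)=pS(\cN)q$.

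\textbf{Main obstacle.}
The delicate point is Step~1: giving a purely ring-theoretic (invertibility-based) description of $eS(\cM)f$ that survives an isometry which is only assumed \emph{linear}, not multiplicative, and establishing that $\Phi(u)$ has the "right" support behaviour — i.e. that $p=l(\Phi(u))$ and $q=r(\Phi(u))$ are genuinely complementary to $\phi(\mathbf1_\cM-e)$ and $\varphi(\mathbf1_\cM-f)$. This is where one must lean hardest on Lemma~\ref{exist}, Remark~\ref{p+q}, and the trace computation $\tau_\cN(p)=\tau_\cM(e)$ together with finiteness of $\cN$ to upgrade the inequalities $\phi(e)\le\mathbf1_\cN-\phi(\mathbf1_\cM-e)$ to equalities. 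Once the corner is correctly identified on both sides, the transport of the invertibility criterion is formal.
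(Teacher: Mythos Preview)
Your Step~1 does not produce a usable criterion. The affine map $x\mapsto w^\ast+x$ (with $w$ a partial isometry between the complements of $e$ and $f$) does \emph{not} carry $eS(\cM)f$ bijectively onto a set of invertibles: already $x=0$ lands on $w^\ast$, which has rank $1-\tau_\cM(e)<1$. More fundamentally, when $e\neq f$ the corner $eS(\cM)f$ is a linear subspace with no internal multiplicative structure, and I do not see how to cut it out by invertibility of an affine family in a way that transports under a merely linear $\Phi$. Your Step~2 then compounds the problem: the assertion ``$\phi(e)=p$'' identifies $l(\Phi(e))$ with $l(\Phi(u))$, but $e$ and $u$ are different elements and nothing proved so far relates their images' supports --- Lemma~\ref{exist} and Remark~\ref{p+q} control $\Phi$ only on \emph{projections}, and $u$ is not one. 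The ``main obstacle'' you flag is in fact the whole argument; it is not a detail to be filled in but the point where the strategy breaks.

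The paper takes a different route and uses an ingredient you do not mention. It first reduces the off-diagonal case to the diagonal one: extend $u$ to a unitary $w$ and replace $\Phi$ by $\Theta=\Phi R_w$, so that the corner $eS(\cM)f$ becomes the \emph{subalgebra} $eS(\cM)e$ and the role of $u$ is taken over by the projection $e$ (with $\Theta(e)=\Phi(u)$, so $p,q$ are unchanged). In this diagonal situation one right-multiplies again to arrange $\Psi(e)=p$ a genuine projection; then Lemma~\ref{exist} forces $\Psi(e')\in pS(\cN)p$ for every subprojection $e'\le e$. The key step is now \cite{GP1992}: every self-adjoint element of the II$_1$-factor $e\cM e$ is a \emph{finite} real-linear combination of projections, so linearity of $\Psi$ pushes $e\cM e$ into $pS(\cN)p$, and rank-density extends this to $eS(\cM)e$. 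Surjectivity comes from running the same argument for $\Psi^{-1}$. This Goldstein--Paszkiewicz input is what bridges the gap between ``$\Phi$ behaves well on projections'' (Lemma~\ref{exist}) and ``$\Phi$ behaves well on the whole corner''; an invertibility argument will not substitute for it.
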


\begin{proof} 
 {\it Case 1.} We first consider the  special case where  $e=f=u$ and show that 
\begin{center}
$\Phi$ maps $eS(\cM)e$ onto $pS(\cN)q,$
\end{center}
where $p=l\left(\Phi(e)\right)$ and $q=r\left(\Phi(e)\right).$ 
Applying~\eqref{invl-r} to  $x=\Phi(e),$ we find an invertible element $a$ such that
$\Phi(e)a=p.$ 
The mapping  $\Psi=R_a\Phi$ (see \eqref{LR} for the definition of $R_a$) is a rank metric isometry from $S(\cM)$ onto $S(\cN)$ satisfying
\begin{align*}
\Psi(e) & = \Phi(e)a = p= l(\Psi(e))=r(\Psi(e)),
\end{align*}
 here the latter two equalities follow from  that $\Psi(e)=p$ is a projection. Since the mappings $\phi$ and $\varphi$ defined in \eqref{Psi} and  \eqref{Psir} are order-preserving,
it follows that
\begin{align*}
l\left(\Psi(e')\right)=\phi(e')\le \phi(e)=l(\Psi(e))=p,~ r\left(\Psi(e')\right)=\varphi(e')\le \varphi(e)=r(\Psi(e))=p
\end{align*}
for any $e'\in P(e\cM e).$ This shows that
$\Psi$ maps $P(e\cM e)$ into $pS(\cN)p.$

Since  $\cM$ is a II$_1$-factor, it follows from 
\cite[Theorem~1]{GP1992} that  every  self-adjoint element in $e\cM e$ is a real-linear combination of finite number of projections.
Therefore,
$\Psi$ maps $e\cM e$ into $pS(\cN)p.$ Since $e\cM e$ is dense in $S(e\cM e)$
in the rank metric topology, $\Psi$ maps $eS(\cM)e$ into $pS(\cN)p.$
Taking into account that the inverse $\Psi^{-1}$ is also a rank metric isometry, by a similar argument as above we conclude that $\Psi^{-1}$ maps  $pS(\cN)p$ into $eS(\cM)e.$
Thus,
$\Psi$ maps $eS(\cM)e$ onto $pS(\cN)p.$ Since $\Phi=R_{a^{-1}}\Psi,$ it follows that $\Phi$ maps
$eS(\cM)e$ onto $pS(\cN)q.$ 

  {\it Case 2.} Now, we consider the general case when $e\ne f$.
Since $e\sim f$ and $\cM$  is  finite, it follows that
$\mathbf{1}_\cM-e\sim \mathbf{1}_\cM-f$~\cite[Theorem 10.11.12]{Davidson}. Take a partial isometry $v\in \cM$ such that
$\mathbf{1}_\cM-e=vv^*$ and $\mathbf{1}_\cM-f=v^*v.$ Setting
$w=u+v$, which is a unitary operator. Observing that $$fw^*=f(u^*+v^*)= fu^* = u^* u u^* =u^*e =(u^*+v^*)e=w^*e,$$ we see that $R_{w^*}$ maps $eS(\cM)f$ onto $eS(\cM)e.$
Consider the  rank metric isometry $\Theta:=\Phi R_w.$ Since $ew=uu^*(u+v)=uu^*u=u,$ we have $\Theta(e)=\Phi(ew)=\Phi(u).$ 
  Thus,
  $l(\Theta(e))=l(\Phi(u))=p$ and $r(\Theta(e))=r(\Phi(u))=q.$ Therefore, applying Case 1 to $\Theta$,  we see that
$\Theta$ maps $eS(\cM)e$ onto $pS(\cN)q.$ 
Hence, 
$\Phi=\Phi R_wR_{w^*}=\Theta R_{w^*}$ maps $eS(\cM)f$ onto $pS(\cN)q.$
The proof is completed.
\end{proof}

The following lemma characterizes the rank metrics of operators of a certain form. 
 \begin{lemma}\label{pxp=0}
 Let $x=px+q\in S(\cM),$  where $p, q \in P(\cM),$ $pq=0$ and $p+q=\mathbf{1}_\cM.$  Suppose that
 $\tau_\cM(l(x))=\tau_\cM(q).$ Then, $pxp=0.$
\end{lemma}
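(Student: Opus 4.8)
The plan is to bring $x$ into a normal form with respect to the decomposition $\mathbf 1_\cM=p+q$ and then read off $l(x)$. Since $pq=0$ and $x=px+q$, multiplying on the left by $q$ gives $qx=qpx+q\cdot q=q$, so $qxp=0$ and $qxq=q$. Setting $a:=pxp$ and $b:=pxq$, we thus get $x=px+q=a+b+q$, i.e. in the Peirce matrix form $x=\begin{pmatrix} a & b\\ 0 & q\end{pmatrix}$; the goal is to show $a=0$.

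Next I would absorb the corner $b\in pS(\cM)q$ into an invertible factor. Since $b^2=pbq\,pbq=0$, the element $\mathbf 1_\cM+b$ is invertible with inverse $\mathbf 1_\cM-b$, and using $bq=b$ together with $ba=pbq\,pap=0$ (here $qp=0$) one checks directly that
\[
(\mathbf 1_\cM+b)(a+q)=a+q+ba+bq=a+q+b=x .
\]
Because left multiplication by an invertible element does not change the equivalence class of the left support (this is \eqref{laxb}), we get $l(x)\sim l(a+q)$, hence $\tau_\cM(l(x))=\tau_\cM(l(a+q))$.

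It then remains to compute $l(a+q)$. From $(a+q)p=a$ and $(a+q)q=q$ we obtain $l(a)\le l(a+q)$ and $q\le l(a+q)$, while \eqref{a+b} gives $l(a+q)\le l(a)\vee l(q)=l(a)\vee q$; since $l(a)=l(pap)\le p$ and $p\perp q$, this forces $l(a+q)=l(a)+q$. Therefore $\tau_\cM(l(x))=\tau_\cM(l(a))+\tau_\cM(q)$, and comparing with the hypothesis $\tau_\cM(l(x))=\tau_\cM(q)$ yields $\tau_\cM(l(a))=0$; faithfulness of $\tau_\cM$ gives $l(a)=0$, so $a=pxp=0$, as claimed. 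The computation is short, and the only slightly delicate point is that one cannot conclude $q\le l(x)$ directly from $qx=q$ — the range of $x$ may only meet $qH$ ``diagonally'' because of the corner $b$ — so the factorization $x=(\mathbf 1_\cM+b)(a+q)$, which straightens $b$ away via an invertible element, is exactly what makes the left-support bookkeeping transparent; everything else uses only the support identities recorded in Section~\ref{sub21}.
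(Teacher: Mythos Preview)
Your proof is correct, but it takes a different route from the paper's. The paper works with the \emph{right} support rather than the left one: from $x^*x=(x^*p+q)(px+q)=x^*px+q$ it reads off $q\le r(x)$ (since $qr(x)=q$); the hypothesis $\tau_\cM(r(x))=\tau_\cM(l(x))=\tau_\cM(q)$ together with finiteness of $\cM$ then forces $r(x)=q$, whence $pxp=pxr(x)p=pxqp=0$ in one line.

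Your closing remark is revealing: you observe that one cannot conclude $q\le l(x)$ directly from $qx=q$, and this is true --- but taking adjoints gives $x^*q=q$, i.e.\ $q\le r(x)$ immediately, which is precisely the shortcut the paper uses. Your factorization $x=(\mathbf 1_\cM+b)(a+q)$ is a perfectly good substitute and gives a clean block picture; it just does more work than necessary because the asymmetry between $l(x)$ and $r(x)$ can be exploited directly.
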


\begin{proof}
By the definition of $x$, we have 
\begin{align*}
x^*x  = (px+q)^* (px+q) =(x^* p+q) (px+q)=x^*px +q.    
\end{align*}
Then, we have 
\begin{align*}
x^*px+q & =x^*x=x^*x r(x)=(x^*px+q)r(x)=x^*px+qr(x),
\end{align*}
and therefore $q\leq r(x).$
Since $\cM$ is finite and $\tau_\cM(l(x))=\tau_\cM(q),$ it follows that  $q= r(x) \sim l(x).$ 
Hence, 
$pxp=pxr(x)p=pxqp=0.$
The proof is completed.
\end{proof}

Below, we show that $\Phi$ is idempotent-preserving. 
\begin{lemma}\label{idem}
For any idempotent $e\in S(\cM)$, its image $\Phi(e)$ under the mapping $\Phi$ is also an idempotent.
\end{lemma}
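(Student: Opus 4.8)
The plan is to reduce to the setting already handled in Lemma \ref{partial}. Let $e \in S(\cM)$ be an idempotent, write $p = l(e)$, $q = \mathbf{1}_\cM - p$, and recall from \eqref{e+u}--\eqref{puq} that $e = p + u$ with $u \in pS(\cM)q$; thus $e$ has the matrix form displayed in \eqref{left-right}. The key observation is that the ``diagonal'' part $p$ of this decomposition is the projection $l(e)$, which $\Phi$ controls via the order-preserving map $\phi$ of Lemma \ref{exist}. First I would apply \eqref{invl-r} to $x = \Phi(p)$ to obtain an invertible $a \in S(\cN)$ with $\Phi(p)a = l(\Phi(p)) =: \bar p$, and pass to the rank metric isometry $\Psi := R_a \Phi$, so that $\Psi(p) = \bar p$ is a projection with $l(\Psi(p)) = r(\Psi(p)) = \bar p$. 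Since $\Phi(e)$ is idempotent iff $\Psi(e) = \Phi(e)a$ is (conjugating by the invertible $a$ preserves idempotency), it suffices to show $\Psi(e)$ is idempotent.

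Next I would locate $\Psi(e)$ inside a corner. Write $\bar q = \mathbf{1}_\cN - \bar p$. Because $p \le \mathbf{1}_\cM$ and $u = pup = (p+u) - p = e \cdot q$ lies in $pS(\cM)q$, linearity gives $\Psi(e) = \Psi(p) + \Psi(u) = \bar p + \Psi(u)$, and by Lemma \ref{partial} applied to the partial isometry with initial projection $p$ and final projection $q$ (or directly to the corner $pS(\cM)q$, noting $\Psi(u)$ has the same left/right supports behaviour as $u$ under the order-preserving maps), $\Psi(u) \in \bar p S(\cN)\bar q$ — here one uses that $\phi, \varphi$ are order-preserving so $l(\Psi(u)) = l(\Psi(eq)) \le l(\Psi(p)) = \bar p$ after the appropriate corner argument, and similarly $r(\Psi(u)) \le \bar q$ since $\Psi$ sends $pS(\cM)q$ onto a subspace of $\bar p S(\cN)\bar q$ by Lemma \ref{partial} with the roles of $e,f$ played by $p,q$. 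Hence $\Psi(e) = \bar p + \bar u$ with $\bar u \in \bar p S(\cN)\bar q$, which in matrix form relative to $\bar p, \bar q$ reads $\left(\begin{smallmatrix} \bar p & \bar u \\ 0 & 0\end{smallmatrix}\right)$, and any such element squares to itself: $(\bar p + \bar u)^2 = \bar p + \bar p\bar u = \bar p + \bar u$ since $\bar u = \bar p \bar u$. Therefore $\Psi(e)$, and hence $\Phi(e)$, is idempotent.

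The main obstacle is the middle step: showing $\Psi(u) \in \bar p S(\cN)\bar q$, i.e. that $\Phi$ (suitably normalized) sends the off-diagonal corner $pS(\cM)q$ into the corresponding off-diagonal corner of $S(\cN)$. The cheap way is to invoke Lemma \ref{partial} directly: taking $u_0 \in \cM$ a partial isometry with $u_0 u_0^* = p$ and $u_0^* u_0 = q$ (such exists since $p \sim q$ is false in general — in fact one should instead take a partial isometry realizing $p \sim p'$ for suitable $p'\le q$ and handle the case $\tau_\cM(p) \le 1/2$ vs.\ its complement, or simply apply Lemma \ref{partial} to $u_0$ with $u_0 u_0^* = p$, $u_0^* u_0 = $ some subprojection of $q$ and cover $q$ by such); a cleaner route is to note $u S(\cM) \subseteq pS(\cM)$ so $u = p \cdot e \cdot q$ and apply Lemma \ref{partial} to each of finitely many partial isometries whose final projections sum to $q$, writing $u$ accordingly and using linearity of $\Psi$. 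Once $\Psi(u)$ is confined to $\bar p S(\cN)\bar q$, the algebra is immediate. One must also double-check that when $\Phi$ is conjugate-linear the same computation goes through verbatim, which it does since conjugate-linearity still respects the decomposition $e = p + u$ and the squaring identity.
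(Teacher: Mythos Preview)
There is a genuine gap at the very first reduction step. You set $\Psi := R_a\Phi$, so that $\Psi(e) = \Phi(e)\,a$, and then assert that ``$\Phi(e)$ is idempotent iff $\Psi(e)=\Phi(e)a$ is (conjugating by the invertible $a$ preserves idempotency)''. But $\Phi(e)\mapsto\Phi(e)a$ is \emph{right multiplication}, not conjugation, and right multiplication by an invertible does not preserve idempotency in either direction. For instance, if $\Phi(e)a$ happens to equal a projection $\bar p$ and $a$ is not of a special form, $\Phi(e)=\bar p\,a^{-1}$ need not square to itself. So even if the rest of your argument produced an idempotent $\Psi(e)$, nothing about $\Phi(e)$ would follow. (Note that the conjugation trick $\Phi^{(a)}=L_aR_{a^{-1}}\Phi$ that does preserve idempotency is used later in the paper, \emph{after} Lemma~\ref{idem}, precisely because one first needs Lemma~\ref{idem} to know $\Phi(e)$ is idempotent before writing it as in \eqref{e+u} and building the $a$ of \eqref{ale}.)

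There is a second, independent problem in the ``middle step''. You want $\Psi(u)\in\bar p\,S(\cN)\,\bar q$ and appeal to the order-preserving maps $\phi,\varphi$ of Lemma~\ref{exist}. But those maps are defined only on projections: $\phi(r)=l(\Phi(r))$ for $r\in P(\cM)$. They tell you nothing about $l(\Psi(u))$ for a non-projection $u$, and Lemma~\ref{partial} only identifies the image corner as $l(\Psi(u_0))S(\cN)r(\Psi(u_0))$ for a partial isometry $u_0$, without any relation to $\bar p,\bar q$. You acknowledge the obstacle, but the sketch does not close it.

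The paper's argument avoids both issues by working directly with $\Phi$ (no normalization), using unitality to write $\Phi(\mathbf{1}_\cM-e)=\mathbf{1}_\cN-\Phi(e)$, and then applying the rank identity of Lemma~\ref{pxp=0} to force $p\Phi(e)p=p$ for $p=l(\Phi(e))$; this immediately gives $\Phi(e)=p+p\Phi(e)(\mathbf{1}_\cN-p)$, an idempotent. No corner-mapping information beyond Lemma~\ref{exist} is needed.
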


\begin{proof} Denote $p=l(\Phi(e))$ and  $q=\mathbf{1}_\cN-p.$
Setting  $f:=\mathbf{1}_\cM-e$, we have
\begin{align}\begin{split}
p\Phi(f)+q &  = p(\mathbf{1}_\cN-\Phi(e))+q = p-p\Phi(e)+q=p+q-\Phi(e)\\
&=\mathbf{1}_\cN-\Phi(e)=\Phi(f).
\end{split}\label{Phif}
\end{align}
Recalling 
that $\Phi$ is a rank metric isometry, we have 
\begin{align*}
\tau_\cN(l(\Phi(f))) & = \left\|\Phi(f)\right\|_S=\left\|f\right\|_S=\tau_\cM(r(f))=\tau_\cM(r(\mathbf{1}_\cM-e))
\overset{\eqref{lr1-}}{=} \tau_\cM(\mathbf{1}_\cM-l(e))\\
& =1-\tau_\cM(l(e))=1-\left\|e\right\|_S
=1-\left\|\Phi(e)\right\|_S= 1-\tau_\cN(p)=\tau_\cN(q).
\end{align*}
This together with~\eqref{Phif} shows that the element $\Phi(f)$ satisfy all conditions of Lemma~\ref{pxp=0}, and hence, we have 
$p\Phi(f)p=0$. Equivalently,  $p=p\Phi(e)p.$
Consequently, we have 
\begin{align*}
\Phi(e) & = l(\Phi(e))\Phi(e)=p\Phi(e)=p\Phi(e)p+p\Phi(e)q=p+p\Phi(e)q,    
\end{align*}
hence it is an idempotent.
This completes the proof.
\end{proof}

Let $\Phi$ be a rank metric isometry.
For an invertible element $a\in S(\cN)$, we 
define $$\Phi^{(a)}:=L_aR_{a^{-1}}\Phi,$$
i.e., $\Phi^{(a)}(x) =a\Phi(x)a^{-1}$ for all $x\in S(\cM).$

Let $e\in P(\cM).$ There exists  an invertible element $a\in S(\cN)$ such that $\Phi^{(a)}(e)$ is a projection.
Indeed, by Lemma~\ref{idem}, $\Phi(e)$ is an idempotent.
Consider the decomposition $\Phi(e)\stackrel{\eqref{e+u}}{=} l(\Phi(e))+v$,  where 
$v\in l(\Phi(e))S(\cM) ({\bf 1}-l(\Phi(e)))$ (see \eqref{puq}). Setting
\begin{align}\label{ale}
a & =  \begin{pmatrix}
l(\Phi(e)) & v\\
0 & \mathbf{1}_\cN-l(\Phi(e))
\end{pmatrix},
\end{align}
we have
 \begin{align*}
a^{-1} & =  \begin{pmatrix}
l(\Phi(e)) & -v\\
0 & \mathbf{1}_\cN-l(\Phi(e))
\end{pmatrix},
\end{align*}
and
\begin{align}\begin{split}
\Phi^{(a)}(e) & =
\begin{pmatrix}
l(\Phi(e)) & v\\
0 & \mathbf{1}_\cN-l(\Phi(e))\end{pmatrix}
\begin{pmatrix}
l(\Phi(e)) & v\\
0 & 0\end{pmatrix}
\begin{pmatrix}
l(\Phi(e)) & -v\\
0 & \mathbf{1}_\cN-l(\Phi(e))\end{pmatrix}  \\
&  =\begin{pmatrix}
l(\Phi(e)) & 0\\
0 & 0\end{pmatrix},
\end{split}\label{vvv}
\end{align}
which shows that $\Phi^{(a)}(e)=l(\Phi(e))$ is a projection.

%In Lemma \ref{onto} below, we characterize the range of $\Phi$ on $eS(\cM )f$. 

Before proceeding  to the proof of  Lemma \ref{onto}, we need some preparations. 
Assume that $e, f$ are projections in $\cM$ such that $ef=0,$ $e\sim f,$ $e+f=\mathbf{1}_\cM.$
Let us fix a partial isometry $s\in \cM$ such that
$s^\ast s=e$ and $ss^\ast =f.$ 
We   identify the subsets $eS(\cM)
f,$ $fS(\cM)e,$ and $fS(\cM) f$ with the
$eS(\cM)s^*,$ $s S(\cM)e,$ and $sS(\cM)s^*,$ respectively. 

The Peirce decomposition
    $
    x=x_{11}+x_{12}s^*+s x_{21}+s x_{22}s^\ast\in S(\cM),
    $
    where $x_{ij}\in eS(\cM)e,$ $i,j=1,2,$ provides a representation of  $S(\cM)$ as the matrix algebra
$\mathbb{M}_2(eS(\cM)e):$
\begin{align*}
x\in S(\cM) \to \begin{pmatrix}
    x_{11} & x_{12} \\
    x_{21} & x_{22} \\
  \end{pmatrix}\in \mathbb{M}_2(eS(\cM)e).
\end{align*}

In the proof of   Lemma \ref{onto},   we  utilize the following relation.
\begin{prop} For any 
 $c\in  eS(\cM)e$ with $l(c)=e$ and any $a,b \in eS(\cM)e$, we have
\begin{align}\label{x2p}
l\begin{pmatrix}
0 & a\\
b & c\end{pmatrix} &
\sim
\begin{pmatrix}
l(ai(c)b) & 0\\
0 & e\end{pmatrix}.
\end{align}

\end{prop}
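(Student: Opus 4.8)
The plan is to compute the left support of the $2\times 2$ matrix $M=\begin{pmatrix}0&a\\ b&c\end{pmatrix}$ by column-reducing it with an invertible right multiplication, since by \eqref{laxb} right multiplication by an invertible element of $S(\cM)$ (viewed inside $\mathbb{M}_2(eS(\cM)e)\cong S(\cM)$) does not change the equivalence class of the left support. First I would use the hypothesis $l(c)=e$, which by \eqref{invertibility} (applied in the factor $eS(\cM)e$, where the unit is $e$) together with $l(c)\sim r(c)$ and finiteness gives that $c$ is in fact invertible in $eS(\cM)e$; hence $i(c)=c^{-1}$ and $c\,i(c)=i(c)\,c=e$. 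Then the elementary column operation ``subtract $i(c)b$ times the second column from the first'' is implemented by right multiplication by the invertible matrix $\begin{pmatrix} e & 0\\ -i(c)b & e\end{pmatrix}$, which sends
\begin{align*}
\begin{pmatrix}0&a\\ b&c\end{pmatrix}\begin{pmatrix} e & 0\\ -i(c)b & e\end{pmatrix}
&= \begin{pmatrix} -a\,i(c)b & a\\ 0 & c\end{pmatrix}.
\end{align*}
By \eqref{laxb} the left support of this new matrix is equivalent to $l(M)$.

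Next I would strip off the second column by a further invertible right multiplication. Right-multiplying by $\begin{pmatrix} e & -i(c)a\\ 0 & e\end{pmatrix}$ (again invertible, being unitriangular) kills the upper-right entry, taking
\begin{align*}
\begin{pmatrix} -a\,i(c)b & a\\ 0 & c\end{pmatrix}\begin{pmatrix} e & -i(c)a\\ 0 & e\end{pmatrix}
&= \begin{pmatrix} -a\,i(c)b & 0\\ 0 & c\end{pmatrix}.
\end{align*}
So $l(M)\sim l\begin{pmatrix} -a\,i(c)b & 0\\ 0 & c\end{pmatrix}$. For a block-diagonal element the left support is the direct sum of the left supports of the diagonal blocks (the two rows live in orthogonal subspaces $eS(\cM)e$ and $fS(\cM)f$, so the supports are orthogonal and add up as projections); thus
\begin{align*}
l\begin{pmatrix} -a\,i(c)b & 0\\ 0 & c\end{pmatrix}
&= \begin{pmatrix} l(a\,i(c)b) & 0\\ 0 & l(c)\end{pmatrix}
= \begin{pmatrix} l(a\,i(c)b) & 0\\ 0 & e\end{pmatrix},
\end{align*}
using $l(-x)=l(x)$ and $l(c)=e$. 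Combining the two displays gives \eqref{x2p}.

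The only genuinely delicate point is the passage from $l(c)=e$ to the invertibility of $c$ in $eS(\cM)e$: one needs that $eS(\cM)e = S(e\cM e)$ is the Murray--von Neumann algebra of the II$_1$-factor $e\cM e$, so that \eqref{invertibility} applies with unit $e$, and that $l(c)=e$ forces $r(c)=e$ via $l(c)\sim r(c)$, $r(c)\le e$ and finiteness of $e\cM e$. Everything else is the bookkeeping of elementary matrix operations together with the invariance \eqref{laxb} of the left-support equivalence class under invertible right (and left) multiplications; one should only take mild care that the invertibility of the unitriangular matrices used is literally an instance of \eqref{invertibility} (they have right support equal to the identity of $\mathbb{M}_2(eS(\cM)e)$, being products of the form $\mathbf 1 + (\text{strictly triangular})$, hence unipotent). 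I expect this bookkeeping, rather than any conceptual difficulty, to be the bulk of the write-up.
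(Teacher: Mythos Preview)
Your overall strategy is sound and, once corrected, slightly more direct than the paper's. The error is in your second elimination: right-multiplying $\begin{pmatrix} -a\,i(c)b & a\\ 0 & c\end{pmatrix}$ by $\begin{pmatrix} e & -i(c)a\\ 0 & e\end{pmatrix}$ does \emph{not} kill the $(1,2)$ entry. The $(1,2)$ entry of the product is
\[
(-a\,i(c)b)(-i(c)a)+a\cdot e \;=\; a\,i(c)b\,i(c)a+a,
\]
which has no reason to vanish. More generally, no right multiplication by an invertible upper-triangular matrix can clear that $(1,2)$ entry while keeping $(2,1)=0$, because column operations cannot use the invertible $(2,2)$ block $c$ to cancel $a$.

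The fix is immediate: use a \emph{left} multiplication instead. Since your first paragraph correctly establishes that $c$ is invertible in $eS(\cM)e$ (so $i(c)c=e$), left-multiplying by the invertible unipotent $\begin{pmatrix} e & -a\,i(c)\\ 0 & e\end{pmatrix}$ gives
\[
\begin{pmatrix} e & -a\,i(c)\\ 0 & e\end{pmatrix}
\begin{pmatrix} -a\,i(c)b & a\\ 0 & c\end{pmatrix}
=
\begin{pmatrix} -a\,i(c)b & a-a\,i(c)c\\ 0 & c\end{pmatrix}
=
\begin{pmatrix} -a\,i(c)b & 0\\ 0 & c\end{pmatrix}.
\]
By \eqref{laxb} this preserves the equivalence class of the left support (not its exact value, but equivalence is all the statement asks for), and your block-diagonal computation of the left support then finishes the proof.

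For comparison, the paper handles the upper-triangular matrix $\begin{pmatrix} a\,i(c)b & a\\ 0 & c\end{pmatrix}$ differently: rather than reducing further to a diagonal, it passes to the right support via $l(X)\sim r(X)$, takes adjoints to obtain a \emph{lower}-triangular matrix, and then invokes a separately proved formula \eqref{x1p} for the left support of lower-triangular matrices with invertible $(2,2)$ corner. Your corrected route avoids this detour through \eqref{x1p} and the adjoint, at the cost of using a left multiplication (hence only $\sim$ rather than $=$ at that step); either approach is perfectly adequate here.
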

\begin{proof}
    
First, we claim that for any  $x,y\in e S(\cM)e $, we have 
\begin{align}\label{x1p}
l\begin{pmatrix}
x & 0\\
y & c \end{pmatrix}=
\begin{pmatrix}
l(x) & 0 \\
0 & e\end{pmatrix}.
\end{align}
Indeed, by  the first equality in \eqref{llrr},  
Eq.\eqref{x1p}  follows directly from the following relations
 $$\begin{pmatrix}
e & 0\\
y  &  e\end{pmatrix}^{-1}= \begin{pmatrix}
e & 0\\
-y &  e \end{pmatrix},$$
$$
\begin{pmatrix}
e & 0\\
0 & c\end{pmatrix}^{-1}
=\begin{pmatrix}
e & 0\\
0 & i(c)\end{pmatrix} 
$$
and 
\begin{align*}
\begin{pmatrix}
x & 0\\
y & c\end{pmatrix}& =\begin{pmatrix}
x & 0\\
0 &  e \end{pmatrix}
\begin{pmatrix}
e & 0\\
y & e \end{pmatrix}
\begin{pmatrix}
e & 0\\
0 & c \end{pmatrix}. 
\end{align*}

Now, we  present the proof of \eqref{x2p}. 
Observe that
\begin{align}
\begin{pmatrix}\label{ecf}
0 & a\\
b & c \end{pmatrix} &
\begin{pmatrix}
-e &    0  \\
i(c)b & e \end{pmatrix}
=\begin{pmatrix}
ai(c)b & ae\\
-be+ci(c)b & ce\end{pmatrix}
=\begin{pmatrix}
ai(c)b & a\\
0 & c\end{pmatrix}.
\end{align}
On the other hand, we have 
\begin{align}\label{rec}\begin{split}
r\begin{pmatrix}
ai(c)b & a\\
0 & c\end{pmatrix}&=
l\begin{pmatrix}
b^* i(c)^* a^* & 0\\
a^* & c^*\end{pmatrix}\stackrel{\eqref{x1p}}{=}
\begin{pmatrix}
l(b^*i(c)^*a^*) & 0\\
0 & e\end{pmatrix} \\ 
& =\begin{pmatrix}
r(ai(c)b) & 0\\
0 & e\end{pmatrix}\sim \begin{pmatrix}
l(ai(c)b) & 0\\
0 & e\end{pmatrix}.
\end{split}
\end{align}
The element $\begin{pmatrix}
-e & 0\\
ai(c)b & e \end{pmatrix}$ is invertible. Indeed,   $\begin{pmatrix}
-e & 0\\
ai(c)b & e \end{pmatrix}
\begin{pmatrix}
-e & 0\\
ai(c)b & e \end{pmatrix}=\begin{pmatrix}
e & 0\\
0 & e \end{pmatrix}.$
Thus, we have 
\begin{align*}
l\begin{pmatrix}
0 & a\\
b & c\end{pmatrix}
\stackrel{\eqref{ecf},\eqref{llrr}}=
l\begin{pmatrix}
ai(c)b & a\\
0 & c\end{pmatrix}\sim r\begin{pmatrix}
ai(c)b & a\\
0 & c\end{pmatrix}\stackrel{\eqref{rec}}{\sim} \begin{pmatrix}
l(ai(c)b) & 0\\
0 & e\end{pmatrix},
\end{align*}
which  completes the proof of~\eqref{x2p}.
\end{proof}

\begin{lemma}\label{onto}
Let $\Phi$ be a unital  rank  metric isometry, and let $e\in P(\cM)$ be a projection such that $e\sim f:=\mathbf{1}_\cM-e.$ Then,
$$\Phi(eS(\cM)f ) = l(\Phi(e))S(\cN)r(\Phi(f))~ \mbox{ ~\rm or~ }~l(\Phi(f))S(\cN)r(\Phi(e)).$$
\end{lemma}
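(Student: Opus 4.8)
The plan is to normalize so that $\Phi(e)$ and $\Phi(f)$ become projections, and then to force the image of $eS(\cM)f$ into an off-diagonal Peirce corner by applying Lemma~\ref{idem} to a family of ``triangular'' idempotents.

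First I would reduce to the case where $p:=\Phi(e)$ and $q:=\Phi(f)=\mathbf 1_\cN-p$ are projections. By Lemma~\ref{idem} the element $\Phi(e)$ is an idempotent, so the construction in \eqref{ale}--\eqref{vvv} provides an invertible $a\in S(\cN)$ with $\Phi^{(a)}(e)=l(\Phi(e))\in P(\cN)$; replacing $\Phi$ by $\Psi:=\Phi^{(a)}$, which is again a unital surjective linear (or conjugate-linear) rank metric isometry, makes $\Psi(e)$ and $\Psi(f)=\mathbf 1_\cN-\Psi(e)$ projections. Since $\Phi(x)=a^{-1}\Psi(x)a$ and, for any idempotent $g$, one has $gS(\cN)=l(g)S(\cN)$ and $S(\cN)g=S(\cN)r(g)$ (by \eqref{i(x)}), the two candidate answers $pS(\cN)q$ and $qS(\cN)p$ for $\Psi(eS(\cM)f)$ translate back into $l(\Phi(e))S(\cN)r(\Phi(f))$ and $l(\Phi(f))S(\cN)r(\Phi(e))$. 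So from now on I assume $p:=\Phi(e)$ and $q:=\Phi(f)=\mathbf 1_\cN-p$ are projections.

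Next I would fix a partial isometry $s\in\cM$ with $s^*s=e$ and $ss^*=f$ (which exists because $e\sim f$), observe that $s^*\in eS(\cM)f$ with $l(s^*)=e$ and $r(s^*)=f$, and apply Lemma~\ref{partial} to $u=s^*$ to get $\Phi(eS(\cM)f)=p'S(\cN)q'$, where $p'=l(\Phi(s^*))$ and $q'=r(\Phi(s^*))$ are projections with $p'\sim q'$ and, using \eqref{xc} and the isometry property, $\tau_\cN(p')=\tau_\cN(q')=\|\Phi(s^*)\|_S=\|s^*\|_S=\tau_\cM(e)=\tfrac12$. The task is then to identify $(p',q')$. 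To do so I would use that, for every $b\in eS(\cM)f$, the relations $eb=b=bf$, $be=fb=0$ and $b^2=0$ make both $e+b$ and $f+b$ idempotents; hence $p+\Phi(b)=\Phi(e+b)$ and $q+\Phi(b)=\Phi(f+b)$ are idempotents by Lemma~\ref{idem}. Writing $y=\Phi(b)$ and expanding $(p+y)^2=p+y$ and $(q+y)^2=q+y$ and subtracting (using $q=\mathbf 1_\cN-p$) gives $py+yp=y$, then $y^2=0$, then $pyp=qyq=0$. Therefore $\Phi(eS(\cM)f)=p'S(\cN)q'$ lies in the off-diagonal corner $pS(\cN)q\oplus qS(\cN)p$.

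Finally, feeding this inclusion back, $(pp')\,S(\cN)\,(q'p)=\{0\}$ and $(qp')\,S(\cN)\,(q'q)=\{0\}$; since $\cN$ is a factor, $cS(\cN)d=\{0\}$ forces $c=0$ or $d=0$, so $pp'=0$ or $q'p=0$, and $qp'=0$ or $q'q=0$. Because $p+q=\mathbf 1_\cN$, $\cN$ is finite, and $p,q,p',q'$ all have trace $\tfrac12$, each of these equalities pins down a projection ($pp'=0\Leftrightarrow p'=q$, $qp'=0\Leftrightarrow p'=p$, $q'p=0\Leftrightarrow q'=q$, $q'q=0\Leftrightarrow q'=p$), and the only compatible choices are $(p',q')=(p,q)$ and $(p',q')=(q,p)$. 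Hence $\Phi(eS(\cM)f)=pS(\cN)q$ or $qS(\cN)p$, and undoing the reduction of the second paragraph finishes the proof. The step I expect to be the main obstacle is pinning down the corner: producing idempotents that push $\Phi(b)$ into $pS(\cN)q\oplus qS(\cN)p$ and then upgrading that inclusion to the exact identification, where the factorness of $\cN$ (primeness of $S(\cN)$) is essential; the rank identities \eqref{x1p}--\eqref{x2p} provide an alternative, rank-theoretic route to the same identification.
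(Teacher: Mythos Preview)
Your proof is correct and takes a genuinely different route from the paper's. Both arguments first conjugate so that $p=\Phi(e)$ and $q=\Phi(f)$ are projections, and both show that $\Phi$ sends $eS(\cM)f$ into the off-diagonal corner $pS(\cN)q\oplus qS(\cN)p$. The paper does this for a single partial isometry $u$ via the family of idempotents $e+\lambda u$ (varying $\lambda$), whereas you use, for each $b\in eS(\cM)f$, the pair of idempotents $e+b$ and $f+b$; both lead to $pyp=qyq=0$.

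The essential divergence is in ruling out the ``mixed'' case. The paper sets $v=\Phi(u)=\begin{pmatrix}0&v_{12}\\ v_{21}&0\end{pmatrix}$, assumes both $v_{12},v_{21}\neq 0$, builds an auxiliary element using the diagonal-preservation from Lemma~\ref{partial}, and derives a contradiction by an explicit $2\times 2$ rank computation via the identity~\eqref{x2p}. You bypass this entirely: having already identified $\Phi(eS(\cM)f)=p'S(\cN)q'$ via Lemma~\ref{partial}, you note that the off-diagonal inclusion forces $(pp')S(\cN)(q'p)=0$ and $(qp')S(\cN)(q'q)=0$, and then invoke primeness of the factor together with the trace constraint $\tau_\cN(p)=\tau_\cN(q)=\tau_\cN(p')=\tau_\cN(q')=\tfrac12$ to pin down $(p',q')\in\{(p,q),(q,p)\}$. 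Your argument is shorter and more conceptual, trading the matrix-rank machinery of~\eqref{x1p}--\eqref{x2p} for a clean ring-theoretic fact; the paper's route, on the other hand, makes the rank identity~\eqref{x2p} do the work and illustrates a technique that is reusable in the subsequent lemmas.
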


\begin{proof} 
{\it We first consider the case when 
  $p=\Phi(e)$ is a projection. }
  In this case, 
  $q: =\Phi(f)=\Phi(\mathbf{1}_\cM-e)=\mathbf{1}_\cN-p$ is also  a  projection.  Since $e\sim f$ and $\Phi$ is a rank metric isometry, it follows that 
\begin{align*}
\tau_\cN(p) & = \left\|p\right\|_S=\left\|\Phi(e)\right\|_S=\left\|e\right\|_S=\frac{1}{2} =\left\|f\right\|_S=\left\|\Phi(f)\right\|_S=\left\|q\right\|_S=\tau_\cN(q).    
\end{align*}
Hence, $p\sim q.$  

We consider Peirce decompositions (i.e., $2\times2$-matrix representations) of $S(\cM)$ and $S(\cN)$ with respect to projections
$\{e,f\}$ (in $\cM$) and $\{p, q\}$ (in $\cN$),  respectively.

By Lemma~\ref{partial}, $\Phi$ preserves diagonal blocks
 \begin{align}\label{x12x}
\Phi\begin{pmatrix}
x_{11}& 0\\
0 & x_{22}\end{pmatrix} & = \begin{pmatrix}
\Phi(x_{11}) & 0 \\
0& \Phi(x_{22}) \end{pmatrix},\quad  x_{11}\in eS(\cM)e, x_{22}\in fS(\cM)f.
\end{align}
Since $e \sim f,$ it follows that  there exists  a partial isometry  $u=\begin{pmatrix}
0 & u \\
0 & 0\end{pmatrix}\in e\cM f$  such that $e=l(u),$ $f=r(u).$

Denote $v:=\Phi(u).$ 
Note that for  any non-zero scalar  $\lambda$, the element $e+\lambda u$ is an idempotent.
Lemma~\ref{idem} implies that $\Phi(e+\lambda u)=p+\lambda v$ is also an idempotent.
We have
\begin{align*}
p+\lambda v & = (p+\lambda v)^2= p+\lambda pv+\lambda vp +\lambda^2 v^2.
\end{align*}
Thus, 
\begin{align*}
v & = pv+vp +\lambda v^2.
\end{align*}
Since $\lambda$ is   arbitrarily taken, it follows that $v^2=0 $ and 
\begin{align*}
v & = pv+vp.
\end{align*}
Multiplying both sides of  the above equality by $p$ from the right,  we get $pvp=0.$
 Multiplying both sides of $v  = pv+vp$ by $q$ from the right,    we have 
 $vq=pvq $, i.e., 
 $qvq=0.$
Thus, the Peirce decomposition of $v$ is
\begin{align}\label{v12v}
v =\Phi(u)  = \begin{pmatrix}
0 & v_{12}\\
v_{21} & 0 \end{pmatrix}.
\end{align}

We claim that 
\begin{align}\label{claimeither0}
\mbox{~either~}v_{12}=0\mbox{~or~}v_{21}=0. 
\end{align}

Suppose both 
$v_{12}$ 
and 
$v_{21}$ 
are non-zero. Without loss of generality, we may 
assume
$l(v_{12})\preceq l(v_{21}).$ Since 
$r(v_{12})\sim l(v_{12})\preceq l(v_{21}),$ 
it follows that 
there exists  a unitary $w\in p\cN p$ such that
\begin{align}\label{wl12}
wr(v_{12})w^*\le l(v_{21}).
\end{align}
Therefore, we have 
\begin{align}\label{rwlne0}
r(v_{12})w^*l(v_{21}) & = w^* w r(v_{12})w^*l(v_{21}) \stackrel{\eqref{wl12}}{=} w^*wr(v_{12})w^*=r(v_{12})w^*\neq 0.
\end{align}
Since $r(v_{12})=i(v_{12})v_{12}$ and $l(v_{21})=v_{21}i(v_{21})$ (see~\eqref{i(x)}), it follows from~\eqref{rwlne0} that  
$v_{12}w^*v_{21}\neq 0.$

Setting  
\begin{align}\label{xxxxx}
x=\begin{pmatrix}
0 & u \\
0 & \Phi^{-1}(w) \end{pmatrix},
\end{align}
we have
\begin{align}
\begin{split}\label{phiphi}
\Phi(x)  & =  \Phi\begin{pmatrix}
0 & 0\\
0 & \Phi^{-1}(w)\end{pmatrix}+\Phi\begin{pmatrix}
0 & u\\
0 & 0\end{pmatrix}\\
& \stackrel{\eqref{x12x},\eqref{v12v}}{=} 
\begin{pmatrix}
0 & 0\\
0 & w\end{pmatrix}+\begin{pmatrix}
0 & v_{12}\\
v_{21} & 0\end{pmatrix}
=\begin{pmatrix}
0 & v_{12}\\
v_{21} & w \end{pmatrix}.\end{split}
\end{align}
By \eqref{x2p} (with $b=0$ and $a=u$), we obtain the rank metrics $\left\|x\right\|_S$ and $\left\|\Phi(x)\right\|_S:$ 
\begin{align*}
\frac{1}{2}  = & \tau_\cN(p) =\tau_\cN(l(w))=\left\|w\right\|_S=\left\|\Phi^{-1}w)\right\|_S=\tau_\cM(l(\Phi^{-1}(w))) \stackrel{\eqref{xxxxx},\eqref{x2p}}{=} \tau_\cM(l(x))\\
& =\left\|x\right\|_S
=  \left\|\Phi(x)\right\|_S
\stackrel{\eqref{phiphi}}{=}\left\|
\begin{pmatrix}
0 & v_{12}\\
v_{21} & w \end{pmatrix}\right\|_S
\stackrel{\eqref{x2p}}{=}
\tau_\cN\begin{pmatrix}
l(v_{12}w^* v_{21}) &  0\\
 0 & l(w)\end{pmatrix}\\
 & =
\tau_\cN(l(w))+\tau_\cN(l(v_{12}w^*v_{21}))=\frac{1}{2}+\tau_\cN(l(v_{12}w^*v_{21})).
\end{align*}
This implies $\tau_\cN\left(l(v_{12}w^*v_{21})\right)=0,$ that is,  $v_{12}w^*v_{21}=0,$
which contracts   $v_{12}w^*v_{21}\neq 0.$ This proves \eqref{claimeither0}.

If $v_{21}=0,$ then
$
\Phi(u) = \begin{pmatrix}
0 & v_{12} \\
0 & 0\end{pmatrix}.
$
Since $v_{12}\in pS(\cN)q$ and
\begin{align*}
\tau_\cN(l(v_{12}))=\left\|v_{12}\right\|_S=\left\|\Phi(u)\right\|_S=\left\|u\right\|_S=\tau_\cM(l(u))=\tau_\cM(e)=\frac{1}{2},
\end{align*}
it follows that
$l(v_{12})=p$ and $r(v_{12})=q.$
Lemma~\ref{partial} implies that $\Phi$ maps $eS(\cM)f$ onto $pS(\cN)q.$

If $v_{12}=0,$ then 
$
\Phi^*(u)  = \begin{pmatrix}
0 & v_{21}^* \\
0 & 0\end{pmatrix},
$ where $\Phi^*$ is a rank metric isometry defined as in \eqref{conjugate}.
As we have shown above $\Phi^*$ maps $eS(\cM)f$ onto $pS(\cN)q.$ Hence, $\Phi$ maps $eS(\cM)f$ onto $qS(\cN)p.$

{\it Now let us consider the general case.} Applying~\eqref{vvv}, we find an invertible element $a$ such that
$p=\Phi^{(a)}(e)$ is a projection.

As we have shown above, 
$$\mbox{
$\Phi^{(a)}$ maps $eS(\cM)f$ onto $pS(\cN)q$ or $qS(\cN)p.$}$$
Since $\Phi^{(a)}(x)=a\Phi(x)a^{-1}$ for all $x\in S(\cM),$ it follows that
$\Phi$ maps $eS(\cM)f$ onto either $a^{-1}pS(\cM)qa=l(a^{-1}p)S(\cN)r(qa)$ or $l(a^{-1}q)S(\cN)r(pa).$
Note that
\begin{align*}
l\left(a^{-1}p\right) & =l\left(a^{-1}pa\right)=l\left(a^{-1}\Phi^{(a)}(e)a\right)=l(\Phi(e))
\end{align*}
and
\begin{align*}
r\left(qa\right) & =l\left(a^{-1}qa\right)=l\left(a^{-1}\Phi^{(a)}(f)a\right)=r(\Phi(f)).
\end{align*}
Arguing similarly, we have 
$
l\left(a^{-1}q\right)  =l(\Phi(f))
$ 
and 
$
r\left(pa\right)  =r(\Phi(e)).
$
Therefore,
$\Phi$ maps $eS(\cM)f$ onto either $l(\Phi(e))S(\cM)r(\Phi(f))$ or $l(\Phi(f))S(\cN)r(\Phi(e)).$
The proof is complete.
\end{proof}

\begin{lemma}\label{ephi}
Let  $f\in \cM$ be a projection and let $e\le \mathbf{1}_\cM-f.$ Suppose that  $\Phi(f)$ is a projection. Then the element 
\begin{align*}
a :=  \begin{pmatrix}
l(\Phi(e)) & v\\
0 & \mathbf{1}_\cN-l(\Phi(e))\end{pmatrix},
\end{align*}
defined as in \eqref{ale}, commutes with $\Phi(f).$
\end{lemma}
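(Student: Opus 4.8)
The plan is to reduce everything to showing that the off-diagonal part $v$ of the idempotent $\Phi(e)$ annihilates $\Phi(f)$ from both sides. Throughout write $p=l(\Phi(e))$ and $q=\mathbf{1}_\cN-p$, and recall from \eqref{e+u}--\eqref{puq} that $\Phi(e)=p+v$ with $v\in pS(\cN)q$, so that in the Peirce decomposition with respect to $\{p,q\}$ the element $a$ of \eqref{ale} is simply $a=\mathbf{1}_\cN+v$. Thus once we know $v\Phi(f)=\Phi(f)v=0$ we get $a\Phi(f)=\Phi(f)+v\Phi(f)=\Phi(f)$ and $\Phi(f)a=\Phi(f)+\Phi(f)v=\Phi(f)$, hence $a\Phi(f)=\Phi(f)a$, which is the assertion (in fact the slightly stronger statement that $a$ fixes $\Phi(f)$ on both sides).

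The first key step — which I expect to be the crux — is the observation that $\Phi(e)\Phi(f)=\Phi(f)\Phi(e)=0$. Since $e\le\mathbf{1}_\cM-f$ we have $ef=fe=0$, so $e+f$ is a projection, hence an idempotent; applying Lemma~\ref{idem} to $e$, to $f$, and to $e+f$, each of $\Phi(e)$, $\Phi(f)$ and $\Phi(e+f)=\Phi(e)+\Phi(f)$ is an idempotent. Expanding $(\Phi(e)+\Phi(f))^2=\Phi(e)+\Phi(f)$ gives $\Phi(e)\Phi(f)+\Phi(f)\Phi(e)=0$; multiplying this identity on the left, and then on the right, by the idempotent $\Phi(e)$ and comparing the two resulting identities yields $\Phi(e)\Phi(f)=\Phi(f)\Phi(e)$, whence $2\Phi(e)\Phi(f)=0$ and both products vanish. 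Everything after this is bookkeeping with Peirce corners.

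Next I would extract $\Phi(f)p=0$ and $p\Phi(f)=0$. From $\Phi(f)\Phi(e)=0$, i.e.\ $\Phi(f)p+\Phi(f)v=0$, the terms lie in complementary right Peirce corners: $\Phi(f)p\in S(\cN)p$ while $\Phi(f)v=\Phi(f)(pvq)\in S(\cN)q$; multiplying by $p$ and by $q$ on the right then forces $\Phi(f)p=0$ (and $\Phi(f)v=0$). Taking adjoints and using that $\Phi(f)$ and $p$ are self-adjoint gives $p\Phi(f)=(\Phi(f)p)^*=0$ — this is exactly the point where the hypothesis that $\Phi(f)$ is a \emph{projection}, not merely an idempotent, is used. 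Feeding $p\Phi(f)=0$ into $\Phi(e)\Phi(f)=0$, i.e.\ $p\Phi(f)+v\Phi(f)=0$, yields $v\Phi(f)=0$.

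Finally, combining $v\Phi(f)=0$ with the already obtained $\Phi(f)v=0$, and recalling $a=\mathbf{1}_\cN+v$, we conclude $a\Phi(f)=\Phi(f)=\Phi(f)a$, so $a$ commutes with $\Phi(f)$, completing the proof. The only nontrivial ingredient is the $2\Phi(e)\Phi(f)=0$ trick; I do not anticipate obstacles beyond routine Peirce-corner computations after that.
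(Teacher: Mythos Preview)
Your argument is correct. The idempotent trick (if $g$, $h$, and $g+h$ are all idempotent then $gh=hg=0$) is standard and your Peirce bookkeeping is clean; the observation $a=\mathbf{1}_\cN+v$ makes the conclusion immediate once $v\Phi(f)=\Phi(f)v=0$.

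The paper takes a different route to the same intermediate fact. Rather than invoking Lemma~\ref{idem} on $e+f$ and running the $2\Phi(e)\Phi(f)=0$ argument, it appeals to Lemma~\ref{exist}: since $e\le \mathbf{1}_\cM-f$ and $\Phi(\mathbf{1}_\cM-f)=\mathbf{1}_\cN-\Phi(f)$ is a projection, the order-preserving maps $\phi$ and $\varphi$ give $l(\Phi(e))\le \mathbf{1}_\cN-\Phi(f)$ and $r(\Phi(e))\le \mathbf{1}_\cN-\Phi(f)$ directly, so $\Phi(e)$ sits in the corner $(\mathbf{1}_\cN-\Phi(f))S(\cN)(\mathbf{1}_\cN-\Phi(f))$. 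The paper then writes down the $3\times3$ Peirce decomposition of $a$ with respect to the three orthogonal projections $\Phi(f)$, $l(\Phi(e))$, $\mathbf{1}_\cN-\Phi(f)-l(\Phi(e))$ and reads off commutativity. Your approach is arguably more elementary in that it bypasses Lemma~\ref{exist} entirely and avoids the $3\times3$ picture; the paper's approach is a bit more structural, pinning down precisely that $v\in l(\Phi(e))S(\cN)(\mathbf{1}_\cN-\Phi(f)-l(\Phi(e)))$, which is slightly more than you need but makes the commutation transparent by inspection.
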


\begin{proof} Since $e\le \mathbf{1}_\cM-f$ and $\Phi(\mathbf{1}_\cM-f)=\mathbf{1}_\cN-\Phi(f)$ is a projection,
it follows from  Lemma~\ref{exist}    that
 $l(\Phi(e))\le \mathbf{1}_\cN-\Phi(f)$ 
and
$r(\Phi(e))\le \mathbf{1}_\cN-\Phi(f).$ Thus, $\Phi(e)\in (\mathbf{1}_\cN-\Phi(f))S(\cN)(\mathbf{1}_\cN-\Phi(f)).$
Moreover, from the decomposition of the idempotent $\Phi(e)\stackrel{\eqref{e+u}}{=} l(\Phi(e))+v$, we have 
$v\in l(\Phi(e))S(\cN)\left(\mathbf{1}_\cN-\Phi(f)-l(\Phi(e))\right)$  (see~\eqref{puq}).
Therefore,  the Peirce decomposition of the element $a$ with respect to  mutually orthogonal projections $\Phi(f), l(\Phi(e)), \mathbf{1}_\cN-\Phi(f)-l(\Phi(e))$ is in the form of 
\begin{align*}
\begin{pmatrix}
\Phi(f) & 0 & 0 \\
0 & l(\Phi(e)) & v \\
0& 0 & \mathbf{1}_\cN-\Phi(f)-l(\Phi(e))\end{pmatrix}.
\end{align*}
Thus,  $a$ commutes with $\Phi(f),$ which completes the proof.
\end{proof}

Below, we consider the range of $\Phi|_{eS(\cM)}$, where $e$ is a projection.   
\begin{lemma}\label{onto1}
Let $e\in P(\cM)$ be a projection such that $\tau_\cM(e)\le \frac{1}{3}$. Then, 
$$\mbox{
$\Phi$ maps $eS(\cM)$ onto either~$pS(\cM)$~or~$S(\cN)q,$}$$
where $p=l(\Phi(e))$ and $ q=r(\Phi(e)).$
\end{lemma}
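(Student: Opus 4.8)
\emph{Overview of the plan.} First reduce to the case where $\Phi(e)$ is a projection; then, using the idempotent‑preservation Lemma~\ref{idem}, pin down the Peirce position (relative to $\Phi(e)$) of the images of the off‑diagonal corner $eS(\cM)(\mathbf 1_\cM-e)$; then, with the help of two auxiliary trace‑$\tfrac12$ projections and Lemmas~\ref{partial}, \ref{onto}, \ref{ephi}, show that this off‑diagonal goes entirely ``to the left'' (giving $\Phi(eS(\cM))\subseteq pS(\cN)$) or entirely ``to the right'' (giving $\Phi(eS(\cM))\subseteq S(\cN)q$); and finally upgrade the resulting inclusion to an equality by invoking the statement for $\Phi^{-1}$. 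Here and below $p=l(\Phi(e))$, $q=r(\Phi(e))$.

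\emph{Step 1 (reduction to $\Phi(e)$ a projection).} By Lemma~\ref{idem} the element $\Phi(e)$ is an idempotent, and by the construction preceding Lemma~\ref{onto} (see \eqref{ale}--\eqref{vvv}) there is an invertible $a_0\in S(\cN)$ with $\Phi^{(a_0)}(e)=l(\Phi(e))$ a projection. Since $L_{a_0}R_{a_0^{-1}}$ is a unital rank metric isometry of $S(\cN)$ by \eqref{laxb}, $\Phi^{(a_0)}$ is again a unital surjective linear (or conjugate‑linear) rank metric isometry; using \eqref{llrr} one checks that the conclusion for $\Phi^{(a_0)}$ yields the conclusion for $\Phi$, the supports $p=l(\Phi(e))$ and $q=r(\Phi(e))$ being recovered exactly as at the end of the proof of Lemma~\ref{onto}. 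So we may assume $\Phi(e)=p$ is a projection; then $\Phi(\mathbf 1_\cM-e)=\mathbf 1_\cN-p$, $\tau_\cN(p)=\tau_\cM(e)=t\le\tfrac13$, and it suffices to show $\Phi(eS(\cM))=pS(\cN)$ or $\Phi(eS(\cM))=S(\cN)p$.

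\emph{Step 2 (anti‑diagonal structure of the off‑diagonal images).} For $y\in eS(\cM)(\mathbf 1_\cM-e)$ both $e+y$ and $(\mathbf 1_\cM-e)+y$ are idempotents, since $l(y)\le e\perp\mathbf 1_\cM-e\ge r(y)$ forces $y^2=0$, $ye=0$ and $(\mathbf 1_\cM-e)y=0$. Applying Lemma~\ref{idem} to both, expanding $(\Phi(e)+\Phi(y))^2=\Phi(e)+\Phi(y)$ and $(\Phi(\mathbf 1_\cM-e)+\Phi(y))^2=\Phi(\mathbf 1_\cM-e)+\Phi(y)$ with $\Phi(e)=p$ and $\Phi(\mathbf 1_\cM-e)=\mathbf 1_\cN-p$, and then adding and subtracting the two identities, we obtain $\Phi(y)^2=0$ and that $\Phi(y)$ anticommutes with the symmetry $2p-\mathbf 1_\cN$; hence $\Phi(y)=\alpha(y)+\beta(y)$ with $\alpha(y):=p\Phi(y)(\mathbf 1_\cN-p)\in pS(\cN)(\mathbf 1_\cN-p)$ and $\beta(y):=(\mathbf 1_\cN-p)\Phi(y)p\in(\mathbf 1_\cN-p)S(\cN)p$. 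Also $\Phi(eS(\cM)e)=pS(\cN)p$ by Lemma~\ref{partial}.

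\emph{Step 3 (the dichotomy — the crux).} Fix orthogonal projections $h,h'\le\mathbf 1_\cM-e$ with $\tau_\cM(h)=\tau_\cM(h')=\tfrac12-t$ and set $g=e+h$, $g_1=e+h'$; then $\tau_\cM(g)=\tau_\cM(g_1)=\tfrac12$, $g\wedge g_1=e$, $h\le\mathbf 1_\cM-g_1$, and $\tau_\cM(\mathbf 1_\cM-(g\vee g_1))=t$. Applying Lemma~\ref{ephi} (with $f=e$, and then with $f=g$) produces invertible elements commuting with $\Phi(e)=p$ and then with $p$ and $\Phi(g)$; after replacing $\Phi$ by the corresponding conjugate we may assume in addition that $\Phi(g)=P$ and $\Phi(g_1)=P_1$ are projections of trace $\tfrac12$ with $p\le P$ and $p\le P_1$, while keeping $\Phi(e)=p$ (so the desired conclusion is unaffected, as at the end of Step~1). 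By Lemma~\ref{partial} and Lemma~\ref{onto}, $\Phi(gS(\cM)(\mathbf 1_\cM-g))$ equals $PS(\cN)(\mathbf 1_\cN-P)$ or $(\mathbf 1_\cN-P)S(\cN)P$, and likewise for $g_1,P_1$; treat the first case for $g$ (the other case is symmetric and yields $\Phi(eS(\cM))\subseteq S(\cN)p$). For $y\in eS(\cM)(\mathbf 1_\cM-g)\subseteq gS(\cM)(\mathbf 1_\cM-g)$ we get $\Phi(y)p=0$, hence $\beta(y)=0$ and $l(\Phi(y))\le p$ by Step~2. To handle $eS(\cM)h\subseteq g_1S(\cM)(\mathbf 1_\cM-g_1)$ we show $g_1$ is in the \emph{same} case: take a partial isometry $u$ with $uu^*=e$ and $u^*u=\mathbf 1_\cM-(g\vee g_1)$ (possible, both projections having trace $t$); then $u\in gS(\cM)(\mathbf 1_\cM-g)\cap g_1S(\cM)(\mathbf 1_\cM-g_1)$, the first membership giving $l(\Phi(u))\le p\le P_1$ as above, and if $g_1$ were in the other case we would get $l(\Phi(u))\le\mathbf 1_\cN-P_1$, forcing $l(\Phi(u))=0$ and $\Phi(u)=0$, contrary to injectivity. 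Thus $\beta\equiv 0$ on $eS(\cM)(\mathbf 1_\cM-e)=eS(\cM)h\oplus eS(\cM)(\mathbf 1_\cM-g)$, so $\Phi(eS(\cM)(\mathbf 1_\cM-e))\subseteq pS(\cN)(\mathbf 1_\cN-p)$, and together with Step~2 this gives $\Phi(eS(\cM))\subseteq pS(\cN)$.

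\emph{Step 4 (from inclusion to equality) and the main obstacle.} The map $\Phi^{-1}$ is again a unital surjective linear (or conjugate‑linear) rank metric isometry and $\Phi^{-1}(p)=e$ is a projection of trace $\le\tfrac13$; by Steps 1--3 applied to $\Phi^{-1}$ and $p$, $\Phi^{-1}(pS(\cN))$ is contained in $eS(\cM)$ or in $S(\cM)e$. Since $eS(\cM)\subseteq\Phi^{-1}(pS(\cN))$ while $eS(\cM)\not\subseteq S(\cM)e$ (a partial isometry from $e$ into $\mathbf 1_\cM-e$ lies in the former but not the latter), the first alternative holds, so $\Phi(eS(\cM))=pS(\cN)$; undoing Step~1 gives $\Phi(eS(\cM))=l(\Phi(e))S(\cN)$ or $S(\cN)r(\Phi(e))$. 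The substantive part of the argument is Step~3: extracting from the idempotent identities of Step~2 together with Lemma~\ref{onto} the ``all‑left or all‑right'' dichotomy and, above all, the compatibility of the two auxiliary half‑trace projections $g$ and $g_1$. The hypothesis $\tau_\cM(e)\le\tfrac13$ is what provides the room to pick $h,h'\le\mathbf 1_\cM-e$ orthogonal (so that $g\wedge g_1=e$ and $h\le\mathbf 1_\cM-g_1$) while still leaving $\tau_\cM(\mathbf 1_\cM-(g\vee g_1))\ge\tau_\cM(e)$, which is exactly what is needed to place the connecting partial isometry $u$.
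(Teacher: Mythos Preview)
Your argument is correct, but it follows a genuinely different path from the paper's. The paper chooses three mutually orthogonal, equivalent projections $e_1=e,\,e_2,\,e_3$ with $e_2,e_3\le\mathbf 1_\cM-e$, reduces (via Lemma~\ref{ephi}, applied successively) to the situation in which all $\Phi(e_i)=p_i$ are projections, and then applies Lemma~\ref{onto} \emph{inside the corner algebras} $(e_i+e_j)S(\cM)(e_i+e_j)\to(p_i+p_j)S(\cN)(p_i+p_j)$ to determine $\Phi(e_1S(\cM)e_j)$. Consistency between the two off‑diagonal corners is forced by the idempotent $e_1+u_2+u_3$ (a different device from your connecting partial isometry $u$), and surjectivity onto $p_1S(\cN)$ is obtained directly from Lemma~\ref{onto} and Lemma~\ref{partial} by varying $e_3$, without passing through $\Phi^{-1}$.

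Your route, by contrast, enlarges $e$ to two half‑trace projections $g=e+h$, $g_1=e+h'$, applies Lemma~\ref{onto} to $g$ and $g_1$ globally, and links the two cases by a partial isometry landing in $\mathbf 1_\cM-(g\vee g_1)$; this gives only an inclusion, which you then upgrade to equality by running Steps~1--3 for $\Phi^{-1}$. The trade‑off: the paper's approach yields surjectivity immediately but requires the idempotent trick and a mild ``varying $e_3$'' argument; your approach has a clean intermediate Step~2 (anti‑diagonality of the off‑diagonal images) and a transparent compatibility check, at the cost of the additional Step~4. One small remark: your closing sentence overstates the role of the hypothesis $\tau_\cM(e)\le\tfrac13$, since with $\tau_\cM(h)=\tau_\cM(h')=\tfrac12-t$ one always has $\tau_\cM(\mathbf 1_\cM-(g\vee g_1))=t=\tau_\cM(e)$ for every $t\le\tfrac12$; your construction in fact goes through for any $t<\tfrac12$, so the bound $\tfrac13$ is not what drives your argument (it is essential in the paper's three‑projection scheme).
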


\begin{proof} Let $e_1=e $ and $ e_2, e_3$ be projections such that $e_1\sim e_2 \sim e_3,$ $e_2e_3=0$ and $e_2+e_3 \le \mathbf{1}_\cM-e_1.$

{\bf The case when $p_i:=\Phi(e_i),$ $i=1,2,3$, are  projections.}  Since $p_i+p_j=\Phi(e_i+e_j)$ is self-adjoint, it follows from Lemma~\ref{idem} that  it is a projection when $i\ne j$.
By Lemma~\ref{partial},  the restriction of $\Phi$ onto
$(e_i+e_j)S(\cM)(e_i+e_j),$  $ i\neq j$,  is a rank metric isometry from
$(e_i+e_j)S(\cM)(e_i+e_j)$ onto $(p_i+p_j)S(\cN)(p_i+p_j).$ Lemma~\ref{onto} implies that $\Phi$ maps
$e_iS(\cM)e_j$ onto $p_iS(\cN)p_j$ or $p_jS(\cN)p_i,$ where $i, j=1,2,3.$

Assume that  $\Phi$ maps
$e_1S(\cM)e_2$ onto $p_1S(\cN)p_2$\footnote{If $\Phi$ maps
$e_1S(\cM)e_2$ onto $p_2S(\cN)p_1$, then we may 
consider the rank metric isometry $\Phi^*$ defined as in \eqref{conjugate} instead of $\Phi$, which   maps $e_1S(\cM)e_2$ onto $p_1S(\cN)p_2$.} and we claim that  $$\mbox{ $\Phi$ maps
$e_1S(\cM)e_3$ onto $p_1S(\cN)p_3.$}$$

Assume by contradiction that $\Phi$ maps
$e_1S(\cM)e_3$ into a proper subspace of $p_3S(\cN)p_1.$ Take partial isometries
$u_2\in e_1S(\cM)e_2$ and $u_3\in e_1S(\cM)e_3$
such that $l(u_2)=e_1, r(u_2)=e_2,$ $l(u_3)=e_1,$ and $r(u_3)=e_3.$
Set
\begin{align*}
v_i =\Phi(u_i),\quad  i=2,3.
\end{align*}
By assumption,
\begin{align*}
v_2 \in p_1S(\cN)p_2,\, v_3\in p_3S(\cN)p_1.
\end{align*}
Since 
\begin{align*}
\tau_\cN(l(v_i))=\left\|v_i\right\|_S=\left\|\Phi(u_i)\right\|_S=\left\|u_i\right\|_S=\tau_\cM(e_i)=\tau_\cN(p_i),\,~ i=2,3,
\end{align*} it follows that $l(v_2)=p_1$ and $r(v_3)=p_1.$ Set $x=e_1+u_2+u_3.$ 
In particular, 
  $x$ is an idempotent, and,
  by Lemma~\ref{idem},
$\Phi(x)$ is also an idempotent.
However,
$$\Phi(x)=\Phi(e_1+u_2+u_3)=p_1+v_2+v_3$$ and
\begin{align*}
\Phi(x)^2 & = (p_1+v_2+v_3)^2=p_1+v_2+v_3+v_3v_2\neq \Phi(x),
\end{align*}
because $r(v_3)=p_1=l(v_2)$, i.e.,  $v_3v_2\neq 0$ (otherwise,  $p_1=r(v_3)l(v_2)=i(v_3)v_3v_2i(v_2)=0)$.
This contradiction shows  that $\Phi$ must map
$e_1S(\cM)e_3$ onto $p_1S(\cN)p_3.$ By Lemma~\ref{partial}, $\Phi$ maps
$e_1S(\cM)e_1$ onto $p_1S(\cN)p_1.$
Since $e_2, e_3$ are arbitrary orthogonal  subprojections of $\mathbf{1}_\cM-e_1,$
it follows that 
$\Phi$ maps $eS(\cM)$ onto $pS(\cN),$ where $p=\Phi(e)$  and $e=e_1$.
Indeed, any element $x\in eS(\cM)$ can be represented in the following form
\begin{align*}
x & =x_1+x_2+x_3\in e_1S(\cM)e_1+e_1S(\cM)e_2+e_1S(\cM)(\mathbf{1}_\cM-e_1-e_2).
\end{align*}
%where $e_1=e, e_2$ are above mentioned projections. 
Since $r(x_3)\sim l(x_3)\le e_1,$
it follows that there exists
  a subprojection $e_3$ of $\mathbf{1}_\cM-e_1-e_2$ such that $e_3\sim e_1$ and $x_3 \in e_1S(\cM)e_3.$ So, 
\begin{align*}
x & =x_1+x_2+x_3\in e_1S(\cM)e_1+e_1S(\cM)e_2+e_1S(\cM)e_3,    
\end{align*}
and hence $\Phi$ maps $eS(\cM)$ onto $pS(\cN).$

{\bf The general case.}
Applying \eqref{vvv} to $\Phi,$ we find an element $a_1$ of the form~\eqref{ale}
 such that $p_1=\Phi^{(a_1)}(e_1)$ is a projection,
 where $\Phi^{(a_1)}$ is defined as in~\eqref{vvv}.

Next, applying \eqref{vvv} to $\Phi_1=\Phi^{(a_1)},$ we find an element $a_2=\begin{pmatrix}
l(\Phi_1(e_2)) & v_2' \\
0 & \mathbf{1}_\cN-l(\Phi_1(e_2))\end{pmatrix}$ of the form~\eqref{ale}
 such that $p_2:=\Phi_1^{(a_2)}(e_2)$ is a projection.
Since $e_2<\mathbf{1}_\cM-e_1$ and $\Phi_1(\mathbf{1}_\cM-e_1)=\mathbf{1}_\cN-p_1,$
applying~Lemma~\ref{ephi} to $\Phi_1$ and $e_2,$ we conclude   that
  $a_2$ commutes with $p_1=\Phi_1(e_1)=\Phi^{(a_1)}(e_1),$
 and therefore
\begin{align*}
\Phi_1^{(a_2)}(e_1)=a_2\Phi_1(e_1)a_2^{-1}=a_2\Phi^{(a_1)}(e_1)a_2^{-1}=a_2p_1a_2^{-1}=p_1.
\end{align*}
Now, applying~\eqref{vvv} to $\Phi_2:=\Phi_1^{(a_2)}$,
we  find an element
$a_3$ such that $p_3:=\Phi_2^{(a_3)}(e_3)$ is a projection.  Since $e_3\le \mathbf{1}_\cM-e_i$ and $\Phi_2(\mathbf{1}_\cM-e_i)=\mathbf{1}_\cN-p_i,$ where $i=1,2,$
it follows from Lemma~\ref{ephi} that
  $a_3$ commutes with both $p_1$ and $p_2.$
 Thus,  
\begin{align*}
\Phi_2^{(a_3)}(e_i) & = a_3\Phi_2(e_i)a_3^{-1}=a_3p_ia_3^{-1}=p_i,   
\end{align*} 
where $i=1,2.$
So, $p_i=\Phi_2^{(a_3)}(e_i)$ ($i=1,2,3$) are all  projections in $\cN.$
Therefore, as we have shown above $\Phi_2^{(a_3)}$ maps $eS(\cM)$ onto $pS(\cN)$. 
By definition, $\Phi_2^{(a_3)}(x)=a\Phi(x)a^{-1}$ for all $x\in S(\cM),$ where $a=a_3a_2a_1.$
Thus, $\Phi$ maps $eS(\cN)$ onto  $a^{-1}pS(\cN)a=a^{-1}pS(\cN)=l(a^{-1}p)S(\cN).$

Observe that
\begin{align*}
l\left(a^{-1}p\right) & =l\left(a^{-1}pa\right)=l\left(a^{-1}\Upsilon^{(a_3)}(e)a\right)=l(\Phi(e)).
\end{align*}
Therefore,  $\Phi$ maps $eS(\cM)$ onto $l(\Phi(e))S(\cN).$ The proof is completed.
\end{proof}

Let $p_1, p_2$ be nontrivial ($\neq 0, \mathbf{1}_\cN$) projections in $\cN.$ 
Observe that 
\begin{align}\label{e1e2}
S(\cN)p_2 \not\subset p_1S(\cN).
\end{align}
Indeed, consider a non-zero partial isometry
$u$ satisfying  $u^*u  \le p_2$ and $uu^* \le \mathbf{1}_\cN - p_1.$  Clearly, $u=uu^*u=up_2 \in S(\cN)p_2$ and $u=uu^*u=(\mathbf{1}_\cN-p_1)u\notin p_1S(\cN).$

Below, we describe the range of the restriction of a unital rank metric isometry to $eS(\cM)$, $e\in P(\cM)$. 
\begin{lemma}\label{onto2}
Let $\Phi$ be a unital rank metric isometry and let $e\in P(\cM)$ be a non-zero projection. 
We have
\begin{enumerate}
\item if $\Phi$ maps $eS(\cM)$ onto $l(\Phi(e))S(\cN),$ then $\Phi$ maps $fS(\cM)$  onto $l(\Phi(f))S(\cN)$ for any
$f\in P(e\cM e);$
\item if $\Phi$ maps $eS(\cM)$ onto $S(\cN)r(\Phi(e)),$ then $\Phi$ maps $fS(\cM)$  onto $S(\cN)r(\Phi(f))$ for any
$f\in P(e\cM e).$
\end{enumerate}
\end{lemma}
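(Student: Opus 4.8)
The plan is to reduce (2) to (1) via the conjugate-linear isometry $\Phi^*$ of \eqref{conjugate}, and then to prove (1) by a dichotomy argument using Lemma~\ref{onto1} together with the obstruction \eqref{e1e2}. First I would observe that for $f\in P(e\cM e)$ we have $fS(\cM)\subseteq eS(\cM)$, so $\Phi(fS(\cM))\subseteq \Phi(eS(\cM))$, and one always has $l(\Phi(f))\le l(\Phi(e))$ by Lemma~\ref{exist}. The content is to pin down which of the two alternatives for $fS(\cM)$ actually occurs, using that the alternative for $eS(\cM)$ is already fixed.

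For part (1), assume $\Phi$ maps $eS(\cM)$ onto $l(\Phi(e))S(\cN)$. Given $f\in P(e\cM e)$, first reduce to the case $\tau_\cM(f)\le \frac13$: pick projections $f=f_1\sim f_2\sim\cdots\sim f_k$ in $e\cM e$ summing below $e$... actually more simply, since $f\le e$, any subprojection argument lets me chain through finitely many orthogonal subprojections of $f$ each of trace $\le\frac13$; so it suffices to treat $\tau_\cM(f)\le\frac13$. By Lemma~\ref{onto1}, $\Phi$ maps $fS(\cM)$ onto either $l(\Phi(f))S(\cN)$ or $S(\cN)r(\Phi(f))$. Suppose, for contradiction, the second alternative holds while the first holds for $e$. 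Then $\Phi(fS(\cM))=S(\cN)r(\Phi(f))\subseteq\Phi(eS(\cM))=l(\Phi(e))S(\cN)$, i.e. $S(\cN)r(\Phi(f))\subseteq l(\Phi(e))S(\cN)$. Since $f\ne 0$, $r(\Phi(f))$ is a nonzero projection, and $l(\Phi(e))$ is a projection with $l(\Phi(e))\le\mathbf{1}_\cN$; if $l(\Phi(e))\ne\mathbf{1}_\cN$ this contradicts \eqref{e1e2}. The remaining case $l(\Phi(e))=\mathbf{1}_\cN$ forces (via $\tau_\cN(l(\Phi(e)))=\|\Phi(e)\|_S=\|e\|_S=\tau_\cM(e)$) that $e=\mathbf{1}_\cM$, and then $\Phi$ maps $S(\cM)$ onto $S(\cN)$, so both descriptions of $fS(\cM)$ coincide only if one checks directly—here I would instead just note $e<\mathbf 1_\cM$ may be assumed WLOG since for $e=\mathbf1_\cM$ statement (1) and (2) are the same statement about all of $S(\cM)$ and there is nothing to prove beyond surjectivity. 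So the second alternative is impossible, proving (1).

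For part (2), apply the conjugate-linear rank metric isometry $\Phi^*$. Since $\Phi$ maps $eS(\cM)$ onto $S(\cN)r(\Phi(e))=S(\cN)l(\Phi^*(e))$, taking adjoints gives that $\Phi^*$ maps $S(\cM)e=(eS(\cM))^*$... one must be careful: $\Phi^*(x)=\Phi(x)^*$, so $\Phi^*(S(\cM)e)=(\Phi(eS(\cM)))^*=(S(\cN)r(\Phi(e)))^*=l(\Phi(e)^*)S(\cN)$. To fit the template of (1) I would instead work with the transpose/opposite setting: replacing $\Phi$ by $\Phi^*$ converts "onto $S(\cN)r(\cdot)$'' statements about right ideals into "onto $l(\cdot)S(\cN)$'' statements about left ideals, so (2) for $\Phi$ follows from (1) applied to $\Phi^*$ together with $r(\Phi(f))=l(\Phi^*(f))^*$ and $fS(\cM)=(S(\cM)f)^*$. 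The main obstacle is bookkeeping the adjoint correspondence between left/right ideals and left/right supports cleanly, and handling the degenerate case $e=\mathbf1_\cM$ (or $l(\Phi(e))=\mathbf1_\cN$) where \eqref{e1e2} has no bite—there the two alternatives genuinely coincide and the lemma is vacuous, so it should be dispatched at the outset.
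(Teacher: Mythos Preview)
Your approach is the paper's: for (1), reduce to $\tau_\cM(f)\le\tfrac13$, invoke Lemma~\ref{onto1}, and rule out the alternative $\Phi(fS(\cM))=S(\cN)r(\Phi(f))$ via the inclusion $\Phi(fS(\cM))\subseteq\Phi(eS(\cM))=l(\Phi(e))S(\cN)$ and \eqref{e1e2}; then obtain (2) from (1) applied to $\Phi^*$. Two points need tightening.

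First, the reassembly from small $f$ to arbitrary $f$ is not a throwaway. The paper writes $f=f_1+f_2+f_3$ with $\tau_\cM(f_i)\le\tfrac13$, then uses \eqref{p+q+r} to get $l(\Phi(f))=\bigvee_i l(\Phi(f_i))$ and Remark~\ref{p+q} to get $l(\Phi(f_i))\wedge l(\Phi(f_j))=0$ for $i\ne j$, so that $l(\Phi(f))S(\cN)=\bigoplus_i l(\Phi(f_i))S(\cN)$ as an internal direct sum of subspaces. That is what makes $\Phi(fS(\cM))=\sum_i\Phi(f_iS(\cM))=\sum_i l(\Phi(f_i))S(\cN)=l(\Phi(f))S(\cN)$ go through; your ``chain through subprojections'' does not articulate this, and without the meet-zero property you cannot conclude the sum equals $l(\Phi(f))S(\cN)$.

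Second, your dismissal of $e=\mathbf{1}_\cM$ as ``nothing to prove beyond surjectivity'' is wrong. When $e=\mathbf{1}_\cM$ the hypotheses of both (1) and (2) reduce to surjectivity and hold simultaneously, while for any nontrivial $f$ the two conclusions $\Phi(fS(\cM))=l(\Phi(f))S(\cN)$ and $\Phi(fS(\cM))=S(\cN)r(\Phi(f))$ are mutually exclusive by \eqref{e1e2}. So the lemma as literally stated is false at $e=\mathbf{1}_\cM$; it is meant for (and only ever applied with) nontrivial $e$, which is exactly the condition under which your \eqref{e1e2} contradiction argument has force. The paper's proof has the same implicit restriction.
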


\begin{proof} We prove the first statement, and the second one follows  by replacing $\Phi$ with $\Phi^*.$

Let $f$ be a  non-zero sub-projection of $e$ such that $\tau_\cM(f)\le \frac{1}{3}.$ By
Lemma~\ref{onto1}, $\Phi$ maps  $fS(\cM)$ onto either $l(\Phi(f))S(\cN)$ or $S(\cN)r(\Phi(f)).$
Suppose the latter one holds. 
 Since $fS(\cM)\subseteq eS(\cM),$ it  follows that $\Phi(fS(\cM))\subseteq \Phi(eS(\cM)),$ and hence,
$S(\cN)r(\Phi(f))=\Phi(fS(\cM))\subseteq \Phi(eS(\cM))=l(\Phi(e))S(\cN),$
which contracts  \eqref{e1e2}. This contradiction shows that $\Phi$ must map $fS(\cM)$ onto $l(\Phi(f))S(\cN).$

Now, let $f$ be an arbitrary non-zero subprojection of $e.$ As $\cM$ is a II$_1$-factor, we decompose $f$ into mutually orthogonal subprojections
$f_1, f_2, f_3$ with $\tau_\cM(f_i) \le \frac{1}{3}$ for  $i=1,2,3.$
From the preceding argument,  $\Phi$ maps each $f_iS(\cM)$ onto $l(\Phi(f_i))S(\cN)$ for $i=1,2,3.$
By~\eqref{p+q+r},  we have
\begin{align*}
l(\Phi(f))=l(\Phi(f_1))\vee l(\Phi(f_2))\vee l(\Phi(f_3)).
\end{align*}
Remark~\ref{p+q} yields that 
$$l(\Phi(f_i))\wedge l(\Phi(f_j))=0$$  for $i\neq j,$ which yields 
\begin{align*}
l(\Phi(f))S(\cN) & =l(\Phi(f_1))S(\cN)\oplus l(\Phi(f_2))S(\cN)\oplus l(\Phi(f_3))S(\cN),
\end{align*}
where  $\oplus$ denotes  the  direct sum of linear subspaces (not necessarily orthogonal sum).
 Thus, 
\begin{align*}
\Phi(fS(\cM))&=\Phi((f_1+f_2+f_3)S(\cM))
\\
&=\Phi(f_1S(\cM))+\Phi(f_2S(\cM))+\Phi(f_3 S(\cM))= l(\Phi(f))S(\cN),
\end{align*}
which completes the proof.
\end{proof}

The following lemma should be compared with Lemma \ref{onto2}.
\begin{lemma}\label{onto3}
Let $\Phi$ be a unital  rank  metric isometry and let $e\in P(\cM)$ be a non-trivial projection. Then
\begin{enumerate}
\item if $\Phi$ maps $eS(\cM)$ onto $l(\Phi(e))S(\cN),$ then $\Phi$ maps $fS(\cM)$  onto $l(\Phi(f))S(\cN)$ for any
$f\in P(\cM)$ such that $f\ge e$;
\item if $\Phi$ maps $eS(\cM)$ onto $S(\cN)r(\Phi(e)),$ then $\Phi$ maps $fS(\cM)$  onto $S(\cN)r(\Phi(f))$ for any
$f\in P(\cM)$ such that $f\ge e$.
\end{enumerate}
\end{lemma}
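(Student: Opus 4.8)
The plan is to establish statement (1); statement (2) then follows by replacing $\Phi$ with the conjugate-linear rank metric isometry $\Phi^*$ of \eqref{conjugate}, exactly as in Lemma~\ref{onto2}, since (using $l(\Phi^*(e))=r(\Phi(e))$) $\Phi$ maps $eS(\cM)$ onto $S(\cN)r(\Phi(e))$ if and only if $\Phi^*$ maps $eS(\cM)$ onto $l(\Phi^*(e))S(\cN)$. For (1), the cases $f=e$ and $f=\mathbf{1}_\cM$ are immediate (for the latter, $\Phi$ is unital and surjective, so $\Phi(S(\cM))=S(\cN)=\mathbf{1}_\cN S(\cN)=l(\Phi(\mathbf{1}_\cM))S(\cN)$), so I would assume $e<f<\mathbf{1}_\cM$, put $g:=f-e$ (a nontrivial projection), and split $g$ into finitely many mutually orthogonal subprojections $g_1,\dots,g_k$ with $\tau_\cM(g_i)\le\min\{1/6,\tau_\cM(e)\}$; this is possible because $\cM$ is a II$_1$-factor and $\tau_\cM(e)>0$.

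The crucial step is the \emph{propagation claim}: for every $i$, $\Phi$ maps $g_iS(\cM)$ onto $l(\Phi(g_i))S(\cN)$, and not onto $S(\cN)r(\Phi(g_i))$, the other alternative allowed by Lemma~\ref{onto1}. To prove it, I would pick a subprojection $e'\le e$ with $\tau_\cM(e')=\tau_\cM(g_i)$. Then Lemma~\ref{onto2} (applicable thanks to the hypothesis of (1) and $e'\in P(e\cM e)$) gives that $\Phi$ maps $e'S(\cM)$ onto $l(\Phi(e'))S(\cN)$, while Lemma~\ref{onto1} applies both to $g_i$ and to the nontrivial projection $e'+g_i$, since $\tau_\cM(e'+g_i)=2\tau_\cM(g_i)\le 1/3$. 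As $e'S(\cM)$ and $g_iS(\cM)$ both lie in $(e'+g_i)S(\cM)$, the set $\Phi((e'+g_i)S(\cM))$ contains $l(\Phi(e'))S(\cN)$ and $\Phi(g_iS(\cM))$. If we had $\Phi(g_iS(\cM))=S(\cN)r(\Phi(g_i))$, then in the first alternative of Lemma~\ref{onto1} for $e'+g_i$ we would get $S(\cN)r(\Phi(g_i))\subseteq l(\Phi(e'+g_i))S(\cN)$, and in the second alternative $l(\Phi(e'))S(\cN)\subseteq S(\cN)r(\Phi(e'+g_i))$, equivalently, after taking adjoints, $S(\cN)l(\Phi(e'))\subseteq r(\Phi(e'+g_i))S(\cN)$; all the projections appearing here are nontrivial, since their $\|\cdot\|_S$ lie strictly between $0$ and $1$, so either inclusion contradicts \eqref{e1e2}. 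Hence $\Phi$ maps $g_iS(\cM)$ onto $l(\Phi(g_i))S(\cN)$.

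It then remains to assemble the pieces. Since $f=e+g_1+\dots+g_k$ with all summands mutually orthogonal, $fS(\cM)=eS(\cM)+g_1S(\cM)+\dots+g_kS(\cM)$, so by linearity $\Phi(fS(\cM))=l(\Phi(e))S(\cN)+\sum_{i=1}^{k}l(\Phi(g_i))S(\cN)$. By Remark~\ref{p+q} the projections $l(\Phi(e)),l(\Phi(g_1)),\dots,l(\Phi(g_k))$ are pairwise disjoint, so — exactly as at the end of the proof of Lemma~\ref{onto2} — this sum of right ideals equals the right ideal generated by their join, which by iterating \eqref{p+q+r} is $l(\Phi(e+g_1+\dots+g_k))S(\cN)=l(\Phi(f))S(\cN)$, as desired.

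I expect the propagation claim to be the main obstacle: one cannot apply Lemma~\ref{onto1} or Lemma~\ref{onto2} directly to $g_i$, because its position relative to $e$ is unconstrained and $\tau_\cM(e)+\tau_\cM(g_i)$ may be large, so the device of introducing an equal-trace subprojection $e'\le e$ and working inside the small corner $e'+g_i$ is precisely what transports the known behaviour of $\Phi$ at $e$ down to $g_i$. Care is needed with the nontriviality bookkeeping so that the two inclusions genuinely contradict \eqref{e1e2}, and with arranging $\tau_\cM(e'+g_i)\le 1/3$ so that Lemma~\ref{onto1} is available.
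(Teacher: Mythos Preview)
Your proof is correct and follows essentially the same strategy as the paper: pick a small subprojection $e'\le e$, form its (orthogonal) sum with the target piece, apply Lemma~\ref{onto1} there, and use \eqref{e1e2} to rule out the wrong alternative. The only minor difference is bookkeeping: the paper works with $g\vee e'$ for an arbitrary small $g\le f$, rules out just the right-ideal alternative for $g\vee e'$, and then descends to $g$ via Lemma~\ref{onto2} before decomposing $f$ into three small pieces, whereas you decompose $f-e$ directly and handle both alternatives for $e'+g_i$ by separate appeals to \eqref{e1e2}.
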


\begin{proof} Again it suffices to prove (1).

Let $f$ be a projection in $\cM$ satisfying $f\ge e.$ Take any non-zero   subprojection $g$ of $f$  such that $\tau_\cM(g)<\frac{1}{3}.$
Now, let $e'$ be a  subprojection of $e$ satisfying  $\tau_\cM(g\vee e')\le \frac{1}{3}.$
By Lemma~\ref{onto2},   
\begin{align}\label{elphi}
\Phi\left(e'S(\cM)\right)=l(\Phi(e'))S(\cN).
\end{align}
By Lemma~\ref{onto1}, $\Phi$ maps $(g\vee e')S(\cM)$ onto either $l(\Phi(g\vee e'))S(\cN)$ or $S(\cN)r(\Phi(g\vee e')).$ Suppose the latter one holds. Since $e'S(\cM)\subseteq (g\vee e')S(\cM),$ it follows that 
$\Phi(e'S(\cM))\subseteq \Phi((g\vee e')S(\cM)),$ and hence,
\begin{align*}
l(\Phi(e'))S(\cN) & \stackrel{\eqref{elphi}}{=} \Phi(e'S(\cM)) \subseteq \Phi((g\vee e')S(\cM))=S(\cN)r(\Phi(g\vee e')).    
\end{align*}
Thus,\footnote{For a subset $E\subseteq S(\cN)$, we  denote $E^*=\left\{x^*: x\in E\right\}$.} 
\begin{align*}
S(\cN)l(\Phi(e')) & = \left(l(\Phi(e'))S(\cN)\right)^* \subseteq \left(S(\cN)r(\Phi(g\vee e'))\right)^*=r(\Phi(g\vee e'))S(\cN),    
\end{align*}
which contracts  \eqref{e1e2}. This contradiction shows that $\Phi$ must map $(g\vee e')S(\cM)$ onto $l(\Phi(g\vee e'))S(\cN).$ Applying
Lemma~\ref{onto2} again, since  $g\le g\vee e',$ we conclude that  $\Phi$ maps $gS(\cM)$ onto $l(\Phi(g))S(\cN).$

Next, let $g$ be any subprojection of $f.$
Since $\cM$ is a II$_1$-factor, we may decompose
$g$ as $$g_1+g_2+g_3,$$ 
where $g_1, g_2, g_3$ are mutually orthogonal subprojections of $g$ with
$\tau_\cM(g_i)\le \frac{1}{3}$ for $i=1,2,3.$
Following the reasoning in the proof of Lemma~\ref{onto2}, we conclude that
$\Phi$ maps $gS(\cM)$ onto $l(\Phi(g))S(\cN).$
The proof is completed.
\end{proof}

The next result strengthens Lemma \ref{onto1} above in the setting of unital rank metric isometries. 

\begin{lemma}\label{leri}
Let $\Phi$ be a unital  rank  metric isometry.
Then  $\Phi$  maps
\begin{enumerate}
    \item either  $eS(\cM)$  onto  $l(\Phi(e))S(\cN)$, and $S(\cM)e$  onto   $S(\cN)r(\Phi(e))$;  
    \item or $eS(\cM)$  onto    $S(\cN)r(\Phi(e)),$  and  $S(\cM)e$  onto     $l(\Phi(e))S(\cN)$
\end{enumerate} 
for any projection  $e\in S(\cM).$
\end{lemma}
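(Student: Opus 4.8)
The plan is to first establish the dichotomy for the right ideal $eS(\cM)$ and \emph{every} projection $e$, then transfer it to the left ideal $S(\cM)e$ via a simple ``twist'' of $\Phi$, and finally observe that the two alternatives are coupled.

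\emph{Step 1 (dichotomy for $eS(\cM)$, all projections).} I would first show that for every $e\in P(\cM)$,
$$\Phi(eS(\cM))=l(\Phi(e))S(\cN)\quad\text{or}\quad\Phi(eS(\cM))=S(\cN)r(\Phi(e)).$$
For $e=0$ and $e=\mathbf 1_\cM$ this is immediate since $\Phi(0)=0$ and $\Phi(S(\cM))=S(\cN)$. For a nontrivial $e$, since $\cM$ is a II$_1$-factor I can choose a nonzero subprojection $e'\le e$ with $\tau_\cM(e')\le\tfrac13$; by Lemma~\ref{onto1} the projection $e'$ has one of the two types, and Lemma~\ref{onto3} propagates that type upward from $e'$ to $e$. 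Moreover, for nontrivial $e$ the two possibilities are mutually exclusive: $\Phi(e)$ is, by Lemma~\ref{idem}, a nonzero non-invertible idempotent, so both $l(\Phi(e))$ and $r(\Phi(e))$ are nontrivial projections, and then \eqref{e1e2} shows $S(\cN)r(\Phi(e))\not\subseteq l(\Phi(e))S(\cN)$.

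\emph{Step 2 (dichotomy for $S(\cM)e$).} Here I would introduce the companion map $\Phi^{\flat}(x):=\Phi(x^{*})^{*}$, $x\in S(\cM)$. One checks directly that $\Phi^{\flat}$ is again a unital surjective rank metric isometry of the same (linear or conjugate-linear) type as $\Phi$, and that $\Phi^{\flat}(e)=\Phi(e)^{*}$, hence $l(\Phi^{\flat}(e))=r(\Phi(e))$ and $r(\Phi^{\flat}(e))=l(\Phi(e))$. Since $e=e^{*}$ we have $S(\cM)e=(eS(\cM))^{*}$, and therefore $\Phi(S(\cM)e)=(\Phi^{\flat}(eS(\cM)))^{*}$. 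Applying Step~1 to $\Phi^{\flat}$ gives $\Phi^{\flat}(eS(\cM))\in\{\,r(\Phi(e))S(\cN),\ S(\cN)l(\Phi(e))\,\}$, and taking adjoints yields
$$\Phi(S(\cM)e)=S(\cN)r(\Phi(e))\quad\text{or}\quad\Phi(S(\cM)e)=l(\Phi(e))S(\cN).$$

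\emph{Step 3 (coupling the alternatives).} For $e\in\{0,\mathbf 1_\cM\}$ both (1) and (2) hold trivially. For a nontrivial $e$, since $e$ is a nontrivial projection in a factor we have $eS(\cM)\ne S(\cM)e$: a nonzero partial isometry $v=ev(\mathbf 1_\cM-e)$ exists and lies in $eS(\cM)$, but were it in $S(\cM)e$ it would satisfy $v=ve=ev(\mathbf 1_\cM-e)e=0$. By injectivity of $\Phi$ this gives $\Phi(eS(\cM))\ne\Phi(S(\cM)e)$. Combining this with Steps~1 and~2 and the exclusivity observed in Step~1: if $\Phi(eS(\cM))=l(\Phi(e))S(\cN)$ then $\Phi(S(\cM)e)$ cannot also equal $l(\Phi(e))S(\cN)$, so $\Phi(S(\cM)e)=S(\cN)r(\Phi(e))$, which is case~(1); symmetrically, if $\Phi(eS(\cM))=S(\cN)r(\Phi(e))$ then $\Phi(S(\cM)e)=l(\Phi(e))S(\cN)$, which is case~(2). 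This completes the argument. Most of the work is carried by Lemmas~\ref{onto1}--\ref{onto3}; the only genuinely new move is the twist $\Phi^{\flat}$ of Step~2, which reduces the statement about the left ideal $S(\cM)e$ to the already-proved statement about a right ideal, and the one point requiring care is the exclusivity of the two options in Step~1 (so that ``$\ne$'' in Step~3 forces the paired alternative), which is exactly where Lemma~\ref{idem} and \eqref{e1e2} enter.
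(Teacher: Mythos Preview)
Your Steps~2 and~3 are sound, and the twist $\Phi^{\flat}(x)=\Phi(x^{*})^{*}$ is exactly the paper's $^{*}\Phi$. The gap is in Step~1 and in what the lemma is really asserting. Although the placement of ``for any projection $e$'' makes the phrasing ambiguous, the intended statement is a \emph{global} dichotomy: one of the two alternatives holds simultaneously for \emph{every} projection $e$, not merely that each individual $e$ satisfies one or the other. This is what the paper's proof establishes and what Lemma~\ref{zeroproduct} actually uses (there the \emph{same} alternative is applied to both $l(y)$ and $r(x)$ for an arbitrary pair with $xy=0$, and ultimately Bre\v{s}ar's \cite[Corollary~4.2]{Bresar07} needs zero-product preservation uniformly over all pairs, not pair-by-pair).

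Your Step~1 only delivers the per-$e$ dichotomy: given a nontrivial $e$ you choose $e'\le e$ with $\tau_{\cM}(e')\le\tfrac{1}{3}$, let Lemma~\ref{onto1} determine the type of $e'$, and push that type up to $e$ via Lemma~\ref{onto3}. But the subprojection $e'$ depends on $e$, so nothing in your argument prevents two projections $e_1,e_2$ from inheriting opposite types from their respective $e_1',e_2'$. The paper instead fixes a \emph{single} test projection $f$ with $\tau_{\cM}(f)<\tfrac{1}{3}$, determines its type once and for all, and then for an arbitrary nontrivial $e$ bridges $f$ to $e$ via a small $f'\le f$ with $f'\vee e\ne\mathbf{1}_{\cM}$: Lemma~\ref{onto2} carries the type from $f$ down to $f'$, Lemma~\ref{onto3} lifts it to $f'\vee e$, and Lemma~\ref{onto2} again carries it down to $e$. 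This down--up--down passage through Lemmas~\ref{onto2} and~\ref{onto3} (your Step~1 never invokes Lemma~\ref{onto2}) is precisely what forces the alternative to be uniform across all projections; without it, your Step~3 only couples the two ideals for a fixed $e$, which is weaker than the lemma as the paper uses it.
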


\begin{proof} We prove the assertion for $eS(\cM)$, as the case for $S(\cM)e$ follows analogously by replacing $\Phi$ with $^*\Phi,$ which is defined by $^*\Phi(x)=\Phi(x^*)^*, x\in S(\cM).$

Let $f\in \cM$ be a arbitrary  non-zero projection  such that $\tau_\cM(f)<\frac{1}{3}.$ By
Lemma~\ref{onto1}, $\Phi$ maps $fS(\cM)$ onto either $l(\Phi(f))S(\cN)$ or $S(\cN)r(\Phi(f)).$

Firstly, we consider the case when  $\Phi$ maps $fS(\cM)$ onto $l(\Phi(f))S(\cN).$

Let $e$ be an arbitrary nontrivial  projection in $\cM.$ Choose a non-zero subprojection $f'$ of $f$ satisfying
$f'\vee e \neq \mathbf{1}_\cM.$
By Lemma~\ref{onto2},
$\Phi$ maps $f'S(\cM)$ onto $l(\Phi(f'))S(\cN).$
Since  $f'\le f' \vee e$, it follows from 
Lemma~\ref{onto3}  that $\Phi$ maps $(f'\vee e)S(\cM)$ onto $l(\Phi(f'\vee e))S(\cN).$ 
Applying Lemma~\ref{onto2} again, we conclude that $\Phi$ maps $eS(\cM)$ onto $l(\Phi(e))S(\cN).$

Now suppose $\Phi$ maps $fS(\cM)$ onto $S(\cN)r(\Phi(f)).$
Then  the rank metric isometry $\Phi^*$ defined by~\eqref{conjugate}, maps
$fS(\cM)$ onto $l(\Phi^*(f))S(\cN),$
because $l(\Phi^*(f))=l(\Phi(f)^*)=r(\Phi(f)).$
Applying Case 1 to  any projection  $e\in S(\cM),$ $\Phi^*$ maps $eS(\cM)$ onto $l(\Phi^*(e))S(\cN). $
Hence, $\Phi$ maps $eS(\cM)$ onto $S(\cN)r(\Phi(e)).$ This completes the proof.
\end{proof}

The next result is the final step before proceeding to  the proof of Theorem \ref{isometry}, which shows that a unital rank metric
isometry or its adjoint is a disjointness-preserving mapping.

\begin{lemma}\label{zeroproduct}
Let $\Phi:S(\cM)\to S(\cN)$ be a unital surjective   rank metric isometry.
For any   $x,y \in S(\cM)$ with  $xy=0,$ either
$
\Phi(x)\Phi(y) = 0
$
or
$
\Phi(y)\Phi(x)  = 0.
$
\end{lemma}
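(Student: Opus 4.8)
The plan is to reduce to the case where $x$ is a projection multiplied by an invertible factor, using the partial-inverse machinery, and then exploit Lemma \ref{leri} to locate $\Phi(x)$ and $\Phi(y)$ inside matching one-sided ideals. First I would write $p=l(x)$ and $e=r(x)$; since $xy=0$ we have $r(x)y = ey$ killed, i.e. $ey=0$, so $y \in (\mathbf{1}_\cM-e)S(\cM)$. By \eqref{invl-r} there is an invertible $b\in S(\cM)$ with $bx = r(x)=e$; replacing $\Phi$ by $\Phi R_{b}$... actually more carefully: I want to pre/post-compose with left and right multiplications (which are rank metric isometries by \eqref{LR}) to arrange that the relation $xy=0$ becomes a relation between a projection and an element supported on the complementary projection. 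Concretely, $x = x\cdot i(x)\cdot x$ and $i(x)x = r(x)=e$, so setting $y'=y$ we have $x(i(x)x)=x$ and the condition $xy=0$ is equivalent to $ey=0$ since $x = p x$ with $p=l(x)$ and $x$ has a partial inverse: $xy=0 \iff i(x)xy=0 \iff ey=0$. So the essential content is: $e=r(x)$, and $y=(\mathbf{1}_\cM-e)y$.

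Next, apply Lemma \ref{leri} to the projection $e$. Either (1) $\Phi(eS(\cM)) = l(\Phi(e))S(\cN)$ and $\Phi(S(\cM)e)=S(\cN)r(\Phi(e))$, or (2) the reversed assignment holds. Consider case (1). Since $x = x e \in S(\cM)e$ (because $r(x)=e$ gives $xe = x$), we get $\Phi(x) \in S(\cN)r(\Phi(e))$, i.e. $\Phi(x)r(\Phi(e)) = \Phi(x)$, equivalently $\Phi(x)(\mathbf{1}_\cN - r(\Phi(e)))=0$. On the other hand $y = (\mathbf{1}_\cM-e)y \in (\mathbf{1}_\cM-e)S(\cM)$; applying Lemma \ref{leri} to the projection $\mathbf{1}_\cM - e$ — and here one must check that the same "case" holds, i.e. that $\Phi((\mathbf{1}_\cM-e)S(\cM)) = l(\Phi(\mathbf{1}_\cM-e))S(\cN)$ as well — we would obtain $\Phi(y) \in l(\Phi(\mathbf{1}_\cM-e))S(\cN)$. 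Then since $\Phi(\mathbf{1}_\cM-e) = \mathbf{1}_\cN - \Phi(e)$ and $\Phi(e)$ need not be a projection, one uses that $l(\Phi(\mathbf{1}_\cM - e)) = \mathbf{1}_\cN - l(\Phi(e))$... hmm, that identity requires $\Phi(e)$ idempotent, which holds by Lemma \ref{idem}, via \eqref{lr1-}: $r(\mathbf{1}-\mathrm{e}) = \mathbf{1}-l(\mathrm{e})$ for an idempotent $\mathrm{e}$, and dually $l(\mathbf{1}-\mathrm{e}) = \mathbf{1} - r(\mathrm{e})$. So $\Phi(y) = l(\Phi(\mathbf{1}_\cM-e))\Phi(y) = (\mathbf{1}_\cN - r(\Phi(e)))\Phi(y)$. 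Combining, $\Phi(x)\Phi(y) = \Phi(x)(\mathbf{1}_\cN-r(\Phi(e)))\cdot\Phi(y)$... wait, that product is $\Phi(x)\cdot[(\mathbf{1}_\cN - r(\Phi(e)))\Phi(y)]$ and I know $\Phi(x)(\mathbf{1}_\cN - r(\Phi(e)))=0$, so indeed $\Phi(x)\Phi(y)=0$. In case (2) the roles of left and right swap and one gets $\Phi(y)\Phi(x)=0$ by the symmetric argument.

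The main obstacle I anticipate is the bookkeeping that forces $e$ and $\mathbf{1}_\cM - e$ into the \emph{same} alternative of Lemma \ref{leri}: a priori Lemma \ref{leri} is a dichotomy applied separately to each projection, and I need that the isometry $\Phi$ is "consistently oriented" across complementary projections. This should follow because $eS(\cM)$ and $(\mathbf{1}_\cM-e)S(\cM)$ together span $S(\cM)$ as a direct sum of left ideals (via $\mathbf{1}_\cM = e + (\mathbf{1}_\cM-e)$), and $\Phi$ being a bijection onto $S(\cN) = \Phi(e)S(\cN)\oplus\cdots$ — more precisely, if $\Phi(eS(\cM)) = l(\Phi(e))S(\cN)$ were a left ideal while $\Phi((\mathbf{1}_\cM-e)S(\cM)) = S(\cN)r(\Phi(\mathbf{1}_\cM-e))$ were a right ideal, their sum would have to be all of $S(\cN)$, and one derives a contradiction with \eqref{e1e2} exactly as in the proofs of Lemma \ref{onto2} and Lemma \ref{onto3}. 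I would isolate this consistency as the first step. A secondary subtlety is handling $x$ and $y$ general (not just $x$ with $r(x)=e$ a projection acting cleanly): but as noted, $xy=0 \iff r(x)y=0$, and $x \in S(\cM)r(x)$ automatically, so no loss of generality arises — the reduction is free, not requiring composition with auxiliary isometries at all. Once the consistency step is in place, the rest is the short ideal-membership computation sketched above.
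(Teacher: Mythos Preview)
Your argument is correct and essentially the same as the paper's, with one clarification that dissolves the obstacle you anticipate: the dichotomy in Lemma~\ref{leri} is \emph{global}, not per-projection. Its proof fixes a single projection $f$ with $\tau_\cM(f)<\tfrac13$, determines which alternative holds for $f$, and then derives that the \emph{same} alternative holds for every projection $e$. So once you are in case (1) for $e=r(x)$, you are automatically in case (1) for $\mathbf{1}_\cM-e$ as well, and your ideal-membership computation finishes the proof with no further bookkeeping.

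The only minor difference from the paper's own proof is in the choice of projections: the paper applies Lemma~\ref{leri} to $l(y)$ and to $r(x)$ separately, then uses the order-preserving map $\phi$ of Lemma~\ref{exist} together with the dual of \eqref{lr1-} to conclude $l(\Phi(l(y)))\le \mathbf{1}_\cN-r(\Phi(r(x)))$. You instead work with $r(x)$ and its complement $\mathbf{1}_\cM-r(x)$, which avoids the appeal to Lemma~\ref{exist} at the cost of invoking Lemma~\ref{leri} for two complementary projections (harmless, as noted). Both routes rely on Lemma~\ref{idem} and the identity $l(\mathbf{1}-\Phi(e))=\mathbf{1}-r(\Phi(e))$ for idempotents, and both handle case (2) by passing to $\Phi^*$.
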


\begin{proof} For any projection $e \in P(\mathcal{M})$, Lemma~\ref{leri} shows
$\Phi$   maps $eS(\cM)$  onto either $l(\Phi(e))S(\cN)$ or $S(\cN)r(\Phi(e)).$

Firstly, we suppose  that $\Phi$   maps $eS(\cM)$  onto $l(\Phi(e))S(\cN).$

Let $x, y\in S(\cM)$ be such that  $xy=0.$ 
By Lemma~\ref{leri}, $\Phi$ maps $l(y)S(\cM)$ and $S(\cM)r(x)$ onto $l(\Phi(l(y)))S(\cM)$
and $S(\cM)r(\Phi(r(x))),$ respectively. 
Thus, $\Phi(y)\in l(\Phi(l(y)))S(\cM)$
and $\Phi(x)\in S(\cM)r(\Phi(r(x))),$ that is,
\begin{align}\label{rrll}
\Phi(x)= \Phi(x)r(\Phi(r(x))),\,\,\, \Phi(y)=l(\Phi(l(y)))\Phi(y).
\end{align}

Since $xy=0,$ it follows that   $r(x)l(y)\stackrel{\eqref{i(x)}}{=}i(x)xyi(y)=0,$ that is, $l(y)\le \mathbf{1}_\cM-r(x).$ By Lemma~\ref{exist} we obtain $l(\Phi(l(y))\le l(\Phi(\mathbf{1}_\cM-r(x))).$
Applying \eqref{lr1-} to the idempotent $\mathbf{1}_\cN-\Phi(r(x))$, we obtain  
\begin{align*}
l(\Phi(l(y)) & \le l(\Phi(\mathbf{1}_\cM-r(x)))=l(\mathbf{1}_\cN-\Phi(r(x)))\stackrel{\eqref{lr1-}}{=}\mathbf{1}_\cN-r(\Phi(r(x))).
\end{align*}
Thus,
$r(\Phi(r(x)))l(\Phi(l(y)))=0.$ Hence,
\begin{align*}
\Phi(x)\Phi(y) & \stackrel{\eqref{rrll}}{=} \Phi(x)r(\Phi(r(x)))l(\Phi(l(x)))\Phi(y)=0,
\end{align*}
which completes the proof of the first case.

Now, suppose $\Phi$   maps $eS(\cM)$  onto $S(\cN)r(\Phi(e)).$ Then $\Phi^*$ maps $eS(\cM)$ onto $l(\Phi^*(e))S(\cN).$
From the preceding argument, $\Phi^*(x)\Phi^*(y)=0.$ 
Thus, $\Phi(y)\Phi(x)=0,$ which completes  the proof.
\end{proof}

Let $\Theta$ be an isomorphism from $S(\cM)$ onto $S(\cN).$ By \cite[Theorem 1.4]{AK2020}, there exist a  $*$-isomorphism
$\Upsilon:\cM\to \cN$ (which extends to a
$*$-isomorphism from
$S(\cM)$ onto $S(\cN)$) and an  invertible element $a\in S(\cN)$ such that
\begin{align}\label{similar}
\Theta(x)=a\Upsilon(x)a^{-1}
\end{align}
for all $x\in S(\cM).$

Now we are in a position to present the proof of Theorem~\ref{isometry}.

\begin{proof}[Proof of Theorem~\ref{isometry}] Let $\Phi:S(\cM)\to S(\cN)$ be a  surjective rank metric isometry. By Lemma~\ref{invert}, $\Phi(\mathbf{1}_\cM)$ is invertible. Setting $\Phi_1:=R_{\Phi(\mathbf{1}_\cM)^{-1}}\Phi,$ we obtain a rank metric isometry
such that $\Phi_1(\mathbf{1}_\cM)=\mathbf{1}_\cN.$

As we have already seen above,   the algebra $S(\cM)$ can be represented as a matrix algebra $$\mathbb{M}_2(eS(\cM)e),$$ where
$e\in P(\cM)$ is a projection with $e\sim \mathbf{1}_\cM-e.$ By Lemma~\ref{zeroproduct}, for any    $x,y \in S(\cM)$ such that $xy=0$ is either
$
\Phi_1(x)\Phi_1(y) = 0
$
or
$
\Phi_1^*(x)\Phi_1^*(y) = 0.
$ Hence, \cite[Corollary~4.2]{Bresar07} implies that either $\Phi_1$ or $\Phi_1^*$ is a homomorphism, that is, either $\Phi_1$ or $\Phi_1^*$ is multiplicative. Since $\Phi_1$ is bijective, it follows that
$\Phi_1$ is either an isomorphism or an anti-isomorphism. By~\eqref{similar}, there exist an $*$-isomorphism
$\Upsilon:S(\cM)\to S(\cN)$ and an  invertible element $a\in S(\cN)$ such that
\begin{align*}
\Theta(x)=a\Upsilon(x)a^{-1}
\end{align*}
for all $x\in S(\cM),$ where $\Theta=\Phi_{1}$ or $\Phi^*_{1}.$  Thus,
$$\mbox{$ \Phi_1(x)=a\Upsilon(x)a^{-1}$ \quad or\quad  $\Phi_1(x)=(a^*)^{-1}\Upsilon(x)^*a^*$}$$ 
for all $x\in S(\cM).$
 Hence, $$\mbox{ 
 $\Phi(x)=a\Upsilon(x)a^{-1}\Phi(\mathbf{1}_{\cM})$ \quad or\quad  $\Phi(x)=(a^*)^{-1}\Upsilon(x)^*a^*\Phi(\mathbf{1}_{\cM})$}$$ for all $x\in S(\cM).$
Therefore, 
in both cases, $\Phi$ has the form~\eqref{gen-form}.
The proof is complete.
\end{proof}

The next result describes  rank metric isometries between two II$_1$-factors (the bounded parts of Murray--von Neumann algebras).

\begin{corollary}\label{isometryvNa}
Let $\cM$ and $\cN$ be von Neumann II$_1$-factors   with faithful normal tracial states $\tau_\cM$ and $\tau_\cN,$
respectively. Suppose  that $\Phi:\cM \to \cN$  is a  linear or conjugate-linear bijection.
 Then $\Phi$ is a rank metric isometry from $\cM$ onto $\cN$ if and only if
\begin{align*}
\Phi(x)=aJ(x)b,\, x\in \cM,
\end{align*}
where $a, b$ are invertible elements in $\cN$   and $J$ is a linear or conjugate-linear Jordan $*$-isomorphism from $\cM$ onto
$\cN.$ If $\Phi$ is unital, it is either an isomorphism or anti-isomorphism.
\end{corollary}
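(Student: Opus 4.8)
The plan is to reduce Corollary~\ref{isometryvNa} to Theorem~\ref{isometry} by passing from the bounded algebras to the ambient Murray--von Neumann algebras. For the ``if'' direction I would only note that for $a,b$ invertible in $\cN$ the maps $L_a,R_b$ are bijective rank metric isometries of $\cN$ (by \eqref{laxb}), that a linear or conjugate-linear Jordan $*$-isomorphism $J:\cM\to\cN$ is a surjective rank metric isometry (being trace-preserving on projections, it preserves the rank function \eqref{xc}, cf.\ the discussion at the beginning of this section), and that a composition of such maps is again one. For the ``only if'' direction I would first extend $\Phi$. Since $\cM$ is rank-dense in $S(\cM)$, every rank metric isometry is uniformly continuous, and $(S(\cN),\rho_{S(\cN)})$ is complete, so $\Phi$ extends uniquely to a rank metric isometry $\widetilde\Phi:S(\cM)\to S(\cN)$. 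Additivity passes to the rank-metric limit; for the homogeneity identity $\widetilde\Phi(\lambda x)=\lambda\widetilde\Phi(x)$ one uses that, although scalar multiplication is not jointly continuous in the rank metric, multiplication by a \emph{fixed} nonzero scalar is rank-metric preserving, hence continuous, so the identity propagates from the dense set $\cM$ to all of $S(\cM)$ (likewise in the conjugate-linear case). Surjectivity of $\widetilde\Phi$ is automatic: its range is rank-complete, hence rank-closed, and contains the rank-dense set $\cN$. Theorem~\ref{isometry} then gives $\widetilde\Phi(x)=aJ(x)b$ for $x\in S(\cM)$, with $a,b$ invertible in $S(\cN)$ and $J$ a linear or conjugate-linear Jordan $*$-isomorphism of $S(\cM)$ onto $S(\cN)$ carrying $\cM$ onto $\cN$; restricting to $\cM$ gives the asserted form except that, a priori, only $a,b\in S(\cN)$.

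The crux is to upgrade $a,b\in S(\cN)$ to $a,b\in\cN$. First, replacing $\Phi$ by $L_u\Phi R_v$ for suitable unitaries $u\in\cN$, $v\in\cM$ (which preserves all hypotheses and changes $a,b$ only by unitary factors from $\cN$), I would arrange $a,b\ge 0$. Then $ab=\widetilde\Phi(\mathbf 1_\cM)=\Phi(\mathbf 1_\cM)\in\cN$, and since $\Phi^{-1}$ maps $\cN$ into $\cM$ while $\widetilde\Phi^{-1}(\mathbf 1_\cN)=J^{-1}\big((ba)^{-1}\big)$, applying $J$ gives $(ba)^{-1}\in\cN$; because $ba=(ab)^{*}$ (here $a,b\ge 0$) this forces $(ab)^{-1}\in\cN$, so $ab$ is invertible in $\cN$. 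Next, from $a\cN b=\widetilde\Phi(\cM)=\cN$ together with the identities $(y\mapsto ayb)=R_{ab}\circ\mathrm{Ad}_a=L_{ab}\circ\mathrm{Ad}_{b^{-1}}$, where $\mathrm{Ad}_c(y):=cyc^{-1}$, I would conclude that each of $\mathrm{Ad}_a,\mathrm{Ad}_{a^{-1}},\mathrm{Ad}_b,\mathrm{Ad}_{b^{-1}}$ maps $\cN$ onto $\cN$. Finally I would prove the key lemma: \emph{a positive invertible $g\in S(\cN)$ with $g\cN g^{-1}\subseteq\cN$ lies in $\cN$}. Applying this to $g\in\{a,a^{-1},b,b^{-1}\}$ makes $a,b$ invertible in $\cN$, and restricting $\widetilde\Phi$ to $\cM$ then finishes the ``only if'' direction.

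I expect this lemma to be the main obstacle; it is exactly where the II$_1$-factor structure of $\cN$ is used (it fails for abelian $\cN$, e.g.\ $g(t)=1/t$). The argument I have in mind: assume $g\notin\cN$; pick $\lambda_0\in\mathrm{spec}(g)\cap(0,\infty)$ and split the spectral projection $e_{[\lambda_0/2,\,2\lambda_0]}(g)$ into mutually orthogonal nonzero subprojections $f_1,f_2,\dots$; since $g$ is unbounded, $e_{[M,\infty)}(g)\downarrow 0$ strongly, so one may choose nonzero spectral projections $g_k=e_{[M_k,M_{k+1})}(g)$ with $M_k\uparrow\infty$ as fast as desired; take partial isometries $v_k\in\cN$ with $v_k^*v_k=f_k$ and $v_kv_k^*\le g_k$ (using that $\cN$ is a II$_1$-factor), and put $y=\sum_k k^{-1}v_k\in\cN$. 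One checks that $\|gv_kg^{-1}\|\ge M_k/(2\lambda_0)$ while the operators $gv_kg^{-1}$ have mutually orthogonal range projections, so that $\|gyg^{-1}\|\ge k^{-1}M_k/(2\lambda_0)\to\infty$; hence $gyg^{-1}\notin\cN$, contradicting $g\cN g^{-1}\subseteq\cN$. (An alternative would invoke automatic continuity of homomorphisms out of $C^{*}$-algebras, together with \cite[Theorem~1.4]{AK2020} and the triviality of the centre of $S(\cN)$.)

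For the final sentence: if $\Phi$ is unital then $ab=\widetilde\Phi(\mathbf 1_\cM)=\mathbf 1_\cN$, so $b=a^{-1}$ and $\Phi(x)=aJ(x)a^{-1}$; since $\cM$ and $\cN$ are II$_1$-factors, $J$ is a $*$-isomorphism or a $*$-anti-isomorphism by \cite[Proposition 3.1]{BHS24}, and accordingly $\Phi$ is an isomorphism or an anti-isomorphism. Everything outside the boundedness lemma is routine extension/restriction bookkeeping.
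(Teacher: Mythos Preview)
Your extension of $\Phi$ to $S(\cM)$ and the invocation of Theorem~\ref{isometry} match the paper exactly, as does the treatment of the ``if'' direction and the unital case. The substantive difference lies in the argument that $a,b\in\cN$.

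The paper proceeds directly: with $a=u|a|$, $b=v|b|$, it fixes a bounded spectral slice $p=e_{(\delta,\epsilon)}(v|b|v^{*})$, chooses $\varepsilon$ so that $q:=e_{(\varepsilon,\infty)}(|a|)\precsim p$, takes a partial isometry $w$ with $ww^{*}=q$, $w^{*}w\le p$, and observes $\Phi(J^{-1}(w))=awb\in\cN$. Since $p_{0}bv^{*}$ is invertible in $p_{0}\cN p_{0}$ (because $p_{0}\le p$ lies in a bounded spectral slice of $v|b|v^{*}$), one peels off $b$ to get $aw\in\cN$, whence $|a|e_{(\varepsilon,\infty)}(|a|)=u^{*}aww^{*}\in\cN$ and so $a\in\cN$. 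The inverses are obtained by applying the same reasoning to $\Phi^{-1}$.

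Your route is more structural: reduce to $a,b\ge 0$, use $ab=\Phi(\mathbf 1_{\cM})\in\cN$ together with $(ba)^{-1}=J(\Phi^{-1}(\mathbf 1_{\cN}))\in\cN$ and $(ab)^{*}=ba$ to get $ab$ invertible in $\cN$, deduce $\mathrm{Ad}_{a}(\cN)=\cN$, and then prove the standalone lemma that a positive invertible $g\in S(\cN)$ with $g\cN g^{-1}\subseteq\cN$ must lie in $\cN$. This lemma is correct and its proof is essentially the same spectral--partial-isometry mechanism as the paper's, repackaged; the payoff is a reusable statement, at the cost of the preliminary reduction and the $\mathrm{Ad}$ bookkeeping. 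Two small repairs: (i) for the reduction to $a,b\ge 0$, the composition $L_{u}\Phi R_{v}$ with $v\in\cM$ does not move $b$ when $J$ is an anti-isomorphism; use $L_{u}R_{w}\Phi$ with $u,w\in\cN$ unitary instead (so $a\mapsto ua$, $b\mapsto bw$ in both cases); (ii) in the lemma, choose the $g_{k}$ first and then take $f_{k}$ with $\tau_{\cN}(f_{k})\le\tau_{\cN}(g_{k})$, otherwise the required partial isometries $v_{k}$ need not exist. With these fixes the argument goes through.
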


\begin{proof} Since $\cM$ is dense in $S(\cM)$  with respect to the rank topology (see \cite[Proposition]{McP}, it follows
that $\Phi$ can be uniquely extended to a  surjective rank metric isometry from $S(\cM)$ onto $S(\cN),$
which we still denote by $\Phi.$
By Theorem~\ref{isometry}, $\Phi$ is of the form \eqref{gen-form}.

We first show that $a, b \in \cN.$  Let $a=u|a|$ and $b=v|b|$ be the polar decompositions of $a$ and $ b$,
respectively.
Select  $0<\delta<\epsilon$ such that $p=e_{(\delta, \epsilon)}(v|b|v^\ast)$ is non-zero. Since
$\lim\limits_{\lambda\uparrow +\infty}e_{(\lambda, +\infty)}(|a|)=0,$ we may  choose
$\varepsilon>0$ such that $$q:=e_{(\varepsilon, +\infty)}(|a|)\preceq p.$$
Choose a partial isometry $w\in \cN$ satisfying $q=ww^\ast$ and $p_0=w^\ast w \le p.$
Since $J$ maps $\cM$ onto $\cN,$ it follows that the element $x=J^{-1}(w)$ lies in $\cM.$
Since $\Phi(x)\in \cN$ and
\begin{align*}
\Phi(x) & = aJ(x)b =awb=a w(w^\ast w)b=a w p_0b,
\end{align*}
it follows that
\begin{align}\label{awp}
awp_0b\in \cN.
\end{align}
Recalling that  $p_0 \le p=e_{(\delta,\epsilon)}(v|b|v^\ast),$ we derive
\begin{align*}
p_0bv^\ast   & = p_0 p b v^\ast = p_0 e_{(\delta,\epsilon)}(v|b|v^\ast)v|b|v^\ast,
\end{align*}
which implies that  $\delta^2 p_0\le p_0 (v|b|v^*)^2 p_0  =  p_0 bb^*  p_0 =p_0 (v|b|v^*)^2 p_0 \le \epsilon^2 p_0.$ Thus, $p_0bv^\ast$ is invertible in $p_0 \cN p_0.$
Consequently,
\begin{align}\label{aw}
aw & = aw p_0bv^\ast \cdot i(p_0bv^\ast) \stackrel{\eqref{awp}}{\in} \cN.
\end{align}
Since $|a|=u^*a$ and $e_{(\varepsilon, +\infty)}(|a|)=ww^*$, it follows that
\[
|a|e_{(\varepsilon, +\infty)}(|a|)= u^\ast aw w^\ast  \stackrel{\eqref{aw}}\in \cN,
\]
which yields that  $|a|=|a|e_{[0, \varepsilon]}(|a|)+|a|e_{(\varepsilon, +\infty)}(|a|)\in \cN.$ Hence, $a\in \cN.$ Similarly, we deduce that  $b\in \cN.$

The inverse $\Phi^{-1}$ is also a surjective rank metric isometry. Since  $J$ is an isomorphism or anti-isomorphism, it follows that 
\begin{align*}
\Phi^{-1}(x)=J^{-1}(a^{-1})J^{-1}(x)J^{-1}(b^{-1}),\quad x\in S(\cN)
\end{align*}
or
\begin{align*}
\Phi^{-1}(x)=J^{-1}(b^{-1})J^{-1}(x)J^{-1}(a^{-1}),\quad x\in S(\cN).
\end{align*}
The preceding argument shows that  $J^{-1}(a^{-1}), J^{-1}(b^{-1})\in \cM,$ i.e.,  $a^{-1}, b^{-1}\in \cN.$
 This completes the proof.
\end{proof}

\section{Proof of Theorem \ref{HKcon}}\label{sec:proof}

Let $\cM$ be a von Neumann  factor of type II$_1$ with a faithful normal tracial state $\tau_\cM.$
%Assume $\cM$ has separable predual.
The Fuglede--Kadison determinant \cite{FK52, HS07, HS09, DSZ17}, is the
multiplicative map ${\rm det} : \cM \to  [0, \infty)$,  defined by
\begin{align}\label{det}
{\rm det}(x) & =\lim\limits_{\varepsilon\downarrow 0}{\rm exp}\left(\tau_\cM({\rm log}(|x|+\varepsilon\mathbf{1}_\cM))\right),\, x\in \cM.
\end{align}
Fuglede and Kadison characterized the determinant in a type II$_1$-factor
using its algebraic properties. Specifically, they proved~\cite[Theorem 3]{FK52} that 
if a  
numerical-valued function
$\Delta$ defined on $\cM_{inv}=\left\{x\in \cM: x^{-1}\in \cM\right\}$ (the group of invertible elements in $\cM$) satisfying the following properties:
\begin{enumerate}
\item $\Delta(xy) = \Delta(x)\Delta(y)$ for all $x,y \in \cM_{inv};$
\item $\Delta(x^*) = \Delta(x)$ for any $x\in \cM_{inv};$
\item $\Delta(\lambda \mathbf{1}_\cM) = \lambda$ for some positive scalar $\lambda \neq 1;$
\item $\Delta(x) \le 1,$ if $0 \le x \le \mathbf{1}_\cM$  and $x\in \cM_{inv},$
\end{enumerate}
then it coincides with the restriction ${\rm det}|_{\cM_{inv}}$  of the determinant ${\rm det}$ 
defined in~\eqref{det}.

The Fuglede–Kadison determinant extends naturally (as in \eqref{det}) to the
 space  $\mathcal{L}_{\rm log}(\cM, \tau_\cM)$ of all
$x \in  S(\cM)$ such that
\begin{align*}
\tau_\cM\left({\rm log}^+|x|\right)=\int\limits_0^\infty{\rm log}^+\lambda\, d\tau_\cM(e_\lambda(|x|))<\infty,
\end{align*}
where ${\rm log}^+(t) = \max\{{\rm log}\,t, 0\}.$ 
The space $\mathcal{L}_{\rm log}(\cM, \tau_\cM)$ equipped with the $F$-norm $\left\|x\right\|_{{\rm log}}=\tau_\cM({\rm log}(\mathbf{1}_\cM +|x|)),$ $x \in \mathcal{L}_{\rm log}(\cM, \tau_\cM)$,  forms a complete topological $*$-algebra~\cite{DSZ16}.
See \cite{HS07} for a systematic  development of the 
Fuglede--Kadison determinants on $\mathcal{L}_{\log }(\cM,\tau_\cM)$.

Below, we recall   some necessary properties of the Brown measure (see for details \cite{Br, HL00, HS07, HS09,Schultz06}).

For a fixed element $x \in  \cL_{\rm log}(\cM,\tau_\cM)$,
the function~\cite[Theorem 2.7]{HS07} (see also \cite[p.20]{HS09})
\begin{align*}
L(x-\lambda\mathbf{1}_\cM): \lambda \mapsto {\rm log}\left({\rm det}(x - \lambda\mathbf{1}_\cM)\right)
\end{align*}
is subharmonic in $\mathbb{C},$ and its Laplacian
\begin{align}\label{bro}
d\mu_x(\lambda) & =\frac{1}{2\pi}\nabla^2 {\rm log}\left({\rm det}\left(x -\lambda\mathbf{1}_\cM\right)\right)d\lambda
\end{align}
(taken in the distribution sense) defines a probability measure $\mu_x$ on $\mathbb{C},$ called {\it the Brown measure
of $x.$} 
The Brown measure is the unique probability measure on $(\mathbb{C},\mathcal{B}(\mathbb{C}))$ satisfying
\begin{align*}
\int\limits_{\mathbb{C}}{\rm log}^+|t|d\mu_x(t)<\infty,
\end{align*} 
and 
\begin{align}\label{Ldet}
L(x-\lambda\mathbf{1}_\cM) =\int\limits_{\mathbb{C}}{\rm log}|t-\lambda| d\mu_x(t)
\end{align}
for all complex numbers $\lambda.$

If $x\in L_{\rm log}(\cM, \tau_\cM)$ is a normal operator, then we have 
\begin{align}\label{normalBrown}
\mu_x=\tau_\cM\circ {\rm E}_x,
\end{align}
where ${\rm E}_x : \mathcal{B}(\mathbb{C}) \to  P(\cM)$ (with $\mathcal{B}(\mathbb{C})$ denoting  the Borel $\sigma$-algebra of $\mathbb{C}$) is the projection-valued measure on $(\mathbb{C},\mathcal{B}(\mathbb{C}))$ in the spectral resolution of $x,$ that is,
\begin{align*}
x & = \int\limits_{\sigma(x)}\lambda d e_\lambda(x),\,\,   {\rm E}_x: A\in \mathcal{B}(\mathbb{C}) \mapsto e_A(x), 
\end{align*}
where $e_A(x)$ is the spectral projection corresponding to
$A\in \mathcal{B}(\mathbb{C}).$

By \cite[Proposition 2.17]{HS07},  the support ${\rm supp}(\mu_x)\subseteq \sigma(x)$  for an arbitrary element $x\in \cM$.

For  an element $x$ in the type I$_n$-factor $\mathbb{M}_n(\mathbb{C}),$ the spectrum $\sigma(x)$
consists of the roots (counting multiplicities)  of the characteristic polynomial
\begin{align}\label{plambda}
P(\lambda) & ={\rm det}(\lambda\mathbf{1}_n-x)=(\lambda-\lambda_1)\cdots(\lambda-\lambda_n),
\end{align}
where $\lambda_1, \ldots, \lambda_n$ are the roots repeated according to algebraic multiplicity. 
In this case, 
the Fuglede--Kadison determinant and the Brown
measure  defined for $x$ are 
\begin{align*}
{\rm det}(x) & = |{\rm det}_n(x)|^{\frac{1}{n}}    
\end{align*} 
and
\begin{align*}
\mu_x & = \frac{1}{n}(\delta_{\lambda_1}+\cdots+\delta_{\lambda_n}),
\end{align*}
respectively, 
where ${\rm det}_n(x)$ denotes the classical determinant of $n\times n$-matrix and $\delta_{\lambda_i}$ is the Dirac measure at $\lambda_i,$ $i=1, \ldots, n$\cite[p.20]{HS09}.

We now state two key properties of the Brown measure due to Haagerup and Schultz  \cite{HS07, HS09,Schultz06}.

The following theorem, the main result (Theorem 1.1) of \cite{HS09}, plays a central role in our analysis. 
It characterizes those projections that decompose an operator $x$ with respect to  the Brown measure.

\begin{theorem}\label{HSprojection}\cite[Theorem 4]{DSZ15}\cite[Theorem 1.1]{HS09} Let $x\in \cM.$
For any Borel set $B \subseteq  \mathbb{C},$ there exists a unique projection
$p = p(x, B)$ such that
\begin{itemize}
\item[(i)] $xp = pxp,$  
\item[(ii)] $\tau_\cM(p) =\mu_x(B),$
\item[(iii)] when $p \neq 0,$ considering $xp$ as an element of $p\cM p,$ its Brown measure
$\mu_{xp}$ is concentrated in $B,$  
\item[(iv)]  when $p \neq \mathbf{1}_\cM,$   considering $(\mathbf{1}_\cM-p)x$ as an element of $(\mathbf{1}_\cM-p)\cM (\mathbf{1}_\cM-p),$ 
its Brown measure
is concentrated in $\mathbb{C}\setminus B.$ 
\end{itemize}
\end{theorem}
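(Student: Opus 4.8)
The plan is to follow a streamlined version of the Haagerup--Schultz strategy \cite{HS09,DSZ15}. The key preliminary remark is that condition (ii) is almost a formal consequence of (i), (iii) and (iv): if $xp=pxp$ then $x$ is ``block upper triangular'' for the splitting $\mathbf{1}_\cM=p+(\mathbf{1}_\cM-p)$, and the multiplicativity of the Fuglede--Kadison determinant gives, for every $\lambda\in\mathbb{C}$,
\begin{align}\label{det-factor}
{\rm det}(x-\lambda\mathbf{1}_\cM)={\rm det}_{p\cM p}\!\big(pxp-\lambda p\big)\cdot{\rm det}_{(\mathbf{1}_\cM-p)\cM(\mathbf{1}_\cM-p)}\!\big((\mathbf{1}_\cM-p)x(\mathbf{1}_\cM-p)-\lambda(\mathbf{1}_\cM-p)\big);
\end{align}
taking Laplacians and using \eqref{bro} yields $\mu_x=\tau_\cM(p)\,\mu_{pxp}+\tau_\cM(\mathbf{1}_\cM-p)\,\mu_{(\mathbf{1}_\cM-p)x}$. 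Evaluating on $B$ and invoking (iii), (iv) forces $\mu_x(B)=\tau_\cM(p)$ once the construction is arranged so that no Brown mass sits on the boundary of $B$ (only countably many radii/levels can cause trouble). Hence it suffices to produce a projection with (i), (iii), (iv), and to prove uniqueness.

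For existence I would first reduce to the case $B=\overline{B(0,r)}$: replacing $x$ by $x-\mu\mathbf{1}_\cM$ translates $\mu_x$ and transports all four conditions, so an arbitrary closed disc is covered, and general Borel sets are treated afterwards. For a disc, put $b_n:=|x^n|^{1/n}\in\cM_+$ and $q_n(s):=e_{[0,s]}(b_n)$; since $\langle (x^n)^*x^n\xi,\xi\rangle=\langle b_n^{2n}\xi,\xi\rangle$, a vector $\xi$ with $q_n(s)\xi=\xi$ satisfies $\|x^n\xi\|\le s^n\|\xi\|$, so $\operatorname{ran}(q_n(s))$ is the space of vectors on which $x^n$ grows at rate at most $s$. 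The main analytic input --- and the principal obstacle --- is to prove that for all but countably many $s>0$ the projections $q_n(s)$ converge in the strong operator topology as $n\to\infty$, and that the $\tau_\cM$-distribution of $b_n$ converges weakly to the push-forward of $\mu_x$ under $z\mapsto|z|$; this is where the subharmonicity of $\lambda\mapsto\log{\rm det}(x-\lambda\mathbf{1}_\cM)$, the factorization ${\rm det}(x^n-\lambda^n\mathbf{1}_\cM)=\prod_{j=0}^{n-1}{\rm det}(x-\omega^j\lambda\mathbf{1}_\cM)$ with $\omega=e^{2\pi i/n}$ (which yields $\mu_{x^n}=(z\mapsto z^n)_*\mu_x$), and careful quantitative control of the Fuglede--Kadison determinant all enter. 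Granting this, set
\begin{align}\label{p-disc}
p:=p\big(x,\overline{B(0,r)}\big):=\bigwedge_{s>r}\Big(\text{s.o.t.-}\lim_{n\to\infty}q_n(s)\Big).
\end{align}
Condition (i) is then immediate: if $\limsup_n\|x^n\xi\|^{1/n}\le r$ then also $\limsup_n\|x^n(x\xi)\|^{1/n}=\limsup_n\|x^{n+1}\xi\|^{1/n}\le r$, so $\operatorname{ran}(p)$ is $x$-invariant and $xp=pxp$. Conditions (iii), (iv) come from running the same radial-distribution analysis inside $p\cM p$ for $pxp$ and inside $(\mathbf{1}_\cM-p)\cM(\mathbf{1}_\cM-p)$ for $(\mathbf{1}_\cM-p)x$, splitting the determinant via \eqref{det-factor}; the identity (ii) can also be read directly off $\tau_\cM(q_n(s))=\mu_{b_n}([0,s])\to\mu_x(\overline{B(0,s)})$ followed by $s\downarrow r$.

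To pass from discs to a general Borel set, I would let $\mathcal{G}$ be the family of sets admitting a (necessarily unique) projection satisfying (i)--(iv), and show $\mathcal{G}=\mathcal{B}(\mathbb{C})$. Complements of open discs are produced by the symmetric ``fast-growth'' construction using $e_{[s,\infty)}(b_n)$; $\mathcal{G}$ is closed under affine images by conjugating and translating $x$; it is closed under finite intersections by a relative construction --- given $p_1=p(x,B_1)$, form the Haagerup--Schultz projection $p_2$ of $p_1xp_1\in p_1\cM p_1$ for $B_1\cap B_2$, and verify via \eqref{det-factor} (applied inside $p_1\cM p_1$ and inside $(\mathbf{1}_\cM-p_2)\cM(\mathbf{1}_\cM-p_2)$, using that $p_1-p_2$ and $\mathbf{1}_\cM-p_1$ are invariant blocks) that $p_2=p(x,B_1\cap B_2)$ --- hence also under finite unions, and under countable monotone limits (take strong-operator limits of the monotone nets $p(x,B_n)$ and control the resulting countable convex combination of block Brown measures by normality of $\tau_\cM$). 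Since closed discs and complements of open discs generate an algebra that generates $\mathcal{B}(\mathbb{C})$, the monotone class theorem gives $\mathcal{G}=\mathcal{B}(\mathbb{C})$.

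Finally, for uniqueness, suppose $p,p'$ both satisfy (i)--(iv) for the same $x$ and $B$. Let $e$ be the range projection of $(\mathbf{1}_\cM-p')p$; it is a subprojection of the $x$-invariant space of $p$ and simultaneously sits ``across'' from $p'$, and two applications of the factorization \eqref{det-factor} show that the Brown measure attached to $e$ would have to be concentrated in $B$ from one side and in $\mathbb{C}\setminus B$ from the other, hence be the zero measure; by faithfulness of $\tau_\cM$ this forces $e=0$, i.e.\ $p\le p'$, and symmetrically $p'\le p$. The genuinely hard point is the one flagged above --- the strong-operator convergence of the spectral projections of $|x^n|^{1/n}$ and the identification of their limiting radial distribution with $\mu_x$ --- while the remaining steps are organizational, modulo the real but routine care needed at the exceptional radii/levels and in the relative constructions.
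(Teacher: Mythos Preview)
The paper does not supply its own proof of this theorem: it is stated with explicit citations to \cite[Theorem~4]{DSZ15} and \cite[Theorem~1.1]{HS09} and is used as a black box in Section~\ref{sec:proof}. There is therefore nothing in the present paper to compare your proposal against; what you have written is a compressed outline of the original Haagerup--Schultz argument from \cite{HS09}, and the analytic core you single out --- strong-operator convergence of the spectral projections of $|x^n|^{1/n}$ and identification of the limiting radial law with the push-forward of $\mu_x$ under $z\mapsto|z|$ --- is indeed where essentially all of the work in \cite{HS09} lies.

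One genuine gap in your sketch is the uniqueness paragraph. You take $e$ to be the range projection of $(\mathbf{1}_\cM-p')p$ and assert it ``is a subprojection of the $x$-invariant space of $p$'', but the range (left support) of $(\mathbf{1}_\cM-p')p$ lies under $\mathbf{1}_\cM-p'$, not under $p$; and in any case $e$ is not obviously invariant for $x$ nor for either compression, so the two applications of \eqref{det-factor} you want are not available as stated. The uniqueness in \cite{HS09} is obtained differently, via a maximality characterization: $p(x,B)$ is the \emph{largest} $x$-invariant projection whose compressed Brown measure is concentrated in $B$ (equivalently, $\mathbf{1}_\cM-p(x,B)$ is the largest $x^*$-invariant projection with compressed Brown measure in $\mathbb{C}\setminus B$), from which equality of any two candidates follows immediately. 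If you want a direct comparison argument, you would instead look at $p\wedge(\mathbf{1}_\cM-p')$ (or use the Kaplansky formula together with the invariance of $p$ and co-invariance of $\mathbf{1}_\cM-p'$) rather than the range projection you wrote.
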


The projection $p(x, B)$ is called the {\it Haagerup--Schultz projection} of $x$
associated with $B$  (see e.g. \cite[p. 98]{DSZ15}).

The following result  gives a decomposition of the Brown measure $\mu_x$ of $x$ via $x$-invariant projections $p$, i.e., $pxp=xp$ (see \cite[Proposition 6.5]{HS09}).

\begin{prop}\label{HSdecomposition} Let $x\in L_{\rm log}(\cM, \tau_\cM)$ and let $p$ be a nontrivial  $x$-invariant projection
(in particular, we   may write $x=\begin{pmatrix}
xp & pxq\\
0 & qxq \end{pmatrix},$ $q=\mathbf{1}_\cM-p$).
Then, 
\begin{align*}
\mu_x &  = \tau_\cM(p)\mu_{xp}+\tau_\cM(q)\mu_{qx},
\end{align*}
where $\mu_x$  denotes the Brown measure of $x,$  and $xp$ and $qx$ are considered
as elements of $\cL_{\rm log}\left(p\cM p, \frac{1}{\tau_\cM(p)}\tau|_{p\cM p}\right)$ and $\cL_{\rm log}\left(q\cM q,\frac{1}{\tau_\cM(q)}\tau|_{q\cM q}\right),$ respectively.
\end{prop}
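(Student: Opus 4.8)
The plan is to compute the Fuglede--Kadison determinant of $x-\lambda\mathbf{1}_\cM$ from the block upper-triangular form of $x$ and then to invoke the uniqueness of the Brown measure. Write ${\rm det}_\cM$, ${\rm det}_{p\cM p}$, ${\rm det}_{q\cM q}$ for the Fuglede--Kadison determinants attached to $\tau_\cM$, $\tfrac{1}{\tau_\cM(p)}\tau_\cM|_{p\cM p}$ and $\tfrac{1}{\tau_\cM(q)}\tau_\cM|_{q\cM q}$ respectively, and set $a_\lambda:=xp-\lambda p$, $c_\lambda:=qx-\lambda q$, $b:=pxq$, so that, since $qxp=0$, the Peirce decomposition of $x-\lambda\mathbf{1}_\cM$ along $p,q$ is $\begin{pmatrix} a_\lambda & b\\ 0 & c_\lambda\end{pmatrix}$.

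The first and principal step is to establish the \emph{block upper-triangular determinant identity}
\begin{align}\label{blocktriang}
{\rm log}\,{\rm det}_\cM(x-\lambda\mathbf{1}_\cM)=\tau_\cM(p)\,{\rm log}\,{\rm det}_{p\cM p}(a_\lambda)+\tau_\cM(q)\,{\rm log}\,{\rm det}_{q\cM q}(c_\lambda).
\end{align}
I would first prove it for every $\lambda$ in the set on which $a_\lambda$ is invertible in $S(p\cM p)$; its complement is the point spectrum of $xp$ in $p\cM p$, which is at most countable (the point spectrum of an operator affiliated with a finite von Neumann algebra is countable, the corresponding eigenprojections summing in trace to at most $1$). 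For such $\lambda$ one has
\begin{align*}
x-\lambda\mathbf{1}_\cM=\begin{pmatrix} a_\lambda & 0\\ 0 & c_\lambda\end{pmatrix}(\mathbf{1}_\cM+n),\qquad n:=\begin{pmatrix} 0 & a_\lambda^{-1}b\\ 0 & 0\end{pmatrix},\quad n^2=0.
\end{align*}
From $(\mathbf{1}_\cM+n)(\mathbf{1}_\cM-n)=\mathbf{1}_\cM$ and the fact that conjugation by the self-adjoint unitary $v:=p-q$ sends $n$ to $-n$, the multiplicativity, unitary invariance, the value $1$ on unitaries and the positivity of the extended determinant \cite{HS07} give ${\rm det}_\cM(\mathbf{1}_\cM+n)^2=1$, hence ${\rm det}_\cM(\mathbf{1}_\cM+n)=1$; and for the diagonal factor $a_\lambda\oplus c_\lambda$ one has $|a_\lambda\oplus c_\lambda|=|a_\lambda|\oplus|c_\lambda|$, so $\tau_\cM({\rm log}|a_\lambda\oplus c_\lambda|)=\tau_\cM({\rm log}|a_\lambda|)+\tau_\cM({\rm log}|c_\lambda|)$, which after rescaling the restricted traces by $\tau_\cM(p)$ and $\tau_\cM(q)$ is exactly the right-hand side of \eqref{blocktriang}. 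Multiplicativity of ${\rm det}_\cM$ then yields \eqref{blocktriang} on the cocountable set; since both sides are subharmonic functions of $\lambda$ (by \eqref{bro}) agreeing off a polar set, the identity in fact holds for every $\lambda\in\bC$.

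Granting \eqref{blocktriang}, the conclusion is immediate: applying the logarithmic-potential formula \eqref{Ldet} in $\cM$, in $p\cM p$ and in $q\cM q$, identity \eqref{blocktriang} says that $\mu_x$ and the Borel probability measure $\nu:=\tau_\cM(p)\mu_{xp}+\tau_\cM(q)\mu_{qx}$ (a genuine probability measure because $\tau_\cM(p)+\tau_\cM(q)=1$) share the same logarithmic potential, i.e. $\int_\bC{\rm log}|t-\lambda|\,d\mu_x(t)=\int_\bC{\rm log}|t-\lambda|\,d\nu(t)$ for all $\lambda$; since $\int_\bC{\rm log}^+|t|\,d\nu(t)<\infty$, the uniqueness of the Brown measure recalled just before \eqref{Ldet} forces $\mu_x=\nu$. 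I expect essentially all of the work to lie in the first step, and specifically in the $\cL_{\rm log}$-bookkeeping: one must check that $a_\lambda^{-1}b$ (hence $\mathbf{1}_\cM+n$) and $a_\lambda\oplus c_\lambda$ lie in $\cL_{\rm log}$ so that the determinant calculus is legitimate. I would settle this either by first proving \eqref{blocktriang} for bounded $x$ and passing to the limit via the continuity of the extended determinant, or by a direct affiliation estimate in the spirit of \cite{HS07}. The statement so obtained is \cite[Proposition 6.5]{HS09}.
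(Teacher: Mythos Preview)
The paper does not prove this proposition; it merely quotes it, with the attribution ``(see \cite[Proposition 6.5]{HS09})'' immediately preceding the statement. So there is no in-paper argument to compare your proposal against.

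Your outline is essentially the standard route to this result and is in the spirit of the original Haagerup--Schultz argument: reduce to the block upper-triangular determinant identity \eqref{blocktriang}, then invoke the uniqueness clause for the Brown measure via \eqref{Ldet}. The factorisation $(a_\lambda\oplus c_\lambda)(\mathbf{1}_\cM+n)$ with $n^2=0$ and ${\rm det}_\cM(\mathbf{1}_\cM+n)=1$ is the right mechanism, and the extension from the cocountable set of $\lambda$ with $a_\lambda$ invertible in $S(p\cM p)$ to all of $\bC$ by subharmonicity is legitimate (a countable set is polar, and two subharmonic functions agreeing Lebesgue-a.e.\ agree everywhere). Your caveat about the $\cL_{\rm log}$-bookkeeping is well placed: for $x\in\cM$ everything is immediate, and the passage to $\cL_{\rm log}(\cM,\tau_\cM)$ is exactly where \cite{HS07} does the work; citing that paper for the extended determinant calculus, as you do, is appropriate here.
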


An element $x\in \cM$ is said to be s.o.t.-quasinilpotent if the sequence $\left\{\left((x^*)^nx^n\right)^{\frac{1}{n}}\right\}_{n\ge 1}$ 
converges in the strong operator topology to $0$ --- by Corollary~2.7 in \cite{HS09}, this is equivalent to the Brown measure of $x$ being concentrated at $0$ (see also \cite[Definition 1.4]{DFS14}).

Let us present a simple example where bounded and unbounded operators in a II$_1$-factor share the same Brown measures.

Let $c\in \cM$ be a positive element with nontrivial support $p=s(c)$ and consider
\begin{itemize}
\item  $a$  be an s.o.t.-quasinilpotent element in $q\cM q,$ where $q=\mathbf{1}_\cM-p;$
 \item $b\in pL_{\rm log}(\cM, \tau_\cM)q$ be an arbitrary element, possibly unbounded.
 \end{itemize}
By Proposition~\ref{HSdecomposition} the operators
\begin{align*}
x=\begin{pmatrix}
c & 0 \\
0 & 0 \end{pmatrix},\,\,\, y=\begin{pmatrix}
c & b \\
0 & a \end{pmatrix}
\end{align*}
have same Brown measures $\mu_x=\mu_y=\tau_\cM(p)\mu_c+(1-\tau_\cM(p))\delta_0,$ where $\delta_0$ is the Dirac measure at $0.$

Before proceeding to the proof of Theorem~\ref{HKcon}, we need one more auxiliary result, which links the Brown measure to the rank metric. 
\begin{lemma}\label{sameBrown} Let $x\in \cM$ be a non-zero positive element and let $y\in L_{\rm log}(\cN, \tau_\cN)$ be a non-zero element such that the  Brown measures of $x$ and $y$ coincide, i.e., $\mu_x=\mu_y.$ Then, $\left\|x\right\|_S\le \left\|y\right\|_S.$
\end{lemma}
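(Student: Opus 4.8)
The plan is to rewrite both rank norms in terms of the mass that the corresponding Brown measure assigns to the origin, and then use the hypothesis $\mu_x=\mu_y$ to transfer information from $y$ to $x$. For the positive element $x$: being positive it is normal, so by \eqref{normalBrown} one has $\mu_x=\tau_\cM\circ{\rm E}_x$; since the kernel projection of a positive operator is $e_{\{0\}}(x)=\mathbf{1}_\cM-s(x)$ and $s(x)=r(x)$ for such an operator, the definition \eqref{xc} of the rank norm gives
\[
\mu_x(\{0\})=\tau_\cM\bigl(\mathbf{1}_\cM-r(x)\bigr)=1-\|x\|_S.
\]

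The main step is to establish the inequality $\mu_y(\{0\})\ge 1-\|y\|_S$, i.e. $\mu_y(\{0\})\ge\tau_\cN(\mathbf{1}_\cN-r(y))$. If $r(y)=\mathbf{1}_\cN$ this is trivial (indeed then $\|y\|_S=1\ge\|x\|_S$ and we are done at once), so assume $e:=\mathbf{1}_\cN-r(y)\neq 0$; since $y\neq 0$ and $\tau_\cN$ is faithful we also have $e\neq\mathbf{1}_\cN$, so $e$ is a nontrivial projection. From $y=y\,r(y)$ we get $y\,e=0$, hence $e$ is a $y$-invariant projection and, viewed as an element of $e\cN e$, the operator $y\,e$ is identically $0$, so its Brown measure is $\delta_0$. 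Applying Proposition \ref{HSdecomposition} to the $y$-invariant projection $e$ then yields
\[
\mu_y=\tau_\cN(e)\,\delta_0+\tau_\cN(\mathbf{1}_\cN-e)\,\mu_{(\mathbf{1}_\cN-e)y},
\]
whence $\mu_y(\{0\})\ge\tau_\cN(e)=\tau_\cN(\mathbf{1}_\cN-r(y))=1-\|y\|_S$.

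Combining the two displays with $\mu_x=\mu_y$ gives $1-\|x\|_S=\mu_x(\{0\})=\mu_y(\{0\})\ge 1-\|y\|_S$, hence $\|x\|_S\le\|y\|_S$, as desired. I do not anticipate a genuine obstacle here: the only step that is more than unwinding definitions is recognizing that the orthogonal complement of the right support of $y$ is automatically a $y$-invariant projection on which $y$ restricts to $0$ --- precisely the hypothesis needed to invoke Proposition \ref{HSdecomposition} --- after which everything reduces to the definition \eqref{xc} of the rank norm, the formula \eqref{normalBrown} for the Brown measure of a normal operator, and the trivial case $r(y)=\mathbf{1}_\cN$. (One could also argue via the Haagerup--Schultz projection $p(y,\{0\})$, but Theorem \ref{HSprojection} is stated only for bounded operators, whereas Proposition \ref{HSdecomposition} is available for all of $L_{\rm log}(\cN,\tau_\cN)$.)
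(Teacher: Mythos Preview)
Your proof is correct, and in fact slightly more economical than the paper's. Both arguments hinge on Proposition~\ref{HSdecomposition} applied to a $y$-invariant projection on which $y$ vanishes, so as to split off a $\delta_0$ summand from $\mu_y$ with weight $1-\|y\|_S$; you use $e=\mathbf{1}_\cN-r(y)$, the paper uses $p=l(y)$, and since $l(y)\sim r(y)$ these give the same mass. The real difference is on the $x$ side: the paper invokes the Haagerup--Schultz projection $p(x,\mathbb{C}\setminus\{0\})$ from Theorem~\ref{HSprojection} and then argues via s.o.t.-quasinilpotence that $s(x)\le p(x,\mathbb{C}\setminus\{0\})$, whereas you simply read off $\mu_x(\{0\})=\tau_\cM(e_{\{0\}}(x))=1-\|x\|_S$ from \eqref{normalBrown}, exploiting directly that $x$ is normal. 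Your route bypasses Theorem~\ref{HSprojection} entirely, which is a genuine simplification; the paper's detour through Haagerup--Schultz projections is not needed once one notices that for normal $x$ the spectral projection $e_{\{0\}}(x)=\mathbf{1}_\cM-s(x)$ already does the job.
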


\begin{proof} Let $p=l(y)$ be the left support of $y.$

{\bf Case 1}. If $p=\mathbf{1}_\cN,$ then we have
\begin{align*}
\left\|y\right\|_S & =\tau_\cN(\mathbf{1}_\cN)=1 \ge \tau_\cM(s(x))=\left\|x\right\|_S.
\end{align*}

{\bf Case 2}. Assume $p\neq \mathbf{1}_\cN.$ We decompose $y$ as
\begin{align*}
y=pyp+pyq=\begin{pmatrix}
pyp & pyq\\
0 & 0\end{pmatrix}, ~q=\mathbf{1}_\cN-p,
\end{align*}
which shows  $p$ is $y$-invariant.
Since $qy=0$ implies that  $\mu_{qy}=\delta_0,$ where $\delta_0$ is the Dirac measure at  $0,$  it follows from 
  Proposition~\ref{HSdecomposition} that  the Brown measure of $y$ is
\begin{align*}
\mu_y=\tau_\cN(p)\mu_{yp}+\tau_\cN(q)\delta_0.
\end{align*}

By the  assumption that  $\mu_x=\mu_y,$ we have
\begin{align}\label{muxdelta}
\mu_x=\tau_\cN(p)\mu_{yp}+\tau_\cN(q)\delta_0.
\end{align}

Let $e=p(x, \mathbb{C}\setminus \{0\})$ denote  the Haagerup--Schultz projection of $x$ associated with
$\mathbb{C}\setminus \{0\}.$ 
By Theorem~\ref{HSprojection}(ii), we have $\tau_\cM(e) =\mu_x(\mathbb{C}\setminus \{0\}).$  
We claim that $e\ne 0$. Assume by contradiction that  $e=0.$
Hence, 
$\mu_x(\mathbb{C}\setminus \{0\})=0$,  that is,  $\mu_x(\{0\})=1$ and $\mu_x=\delta_0.$ By~\cite[Corollary~2.7]{HS09}, $x$ is s.o.t.-quasinilpotent. Then $x=0,$\footnote{ Any operator  that is simultaneously normal and s.o.t.-quasinilpotent must necessarily be zero.
In fact, assume $a$ is normal and $a$ is s.o.t.-quasinilpotent. Then by~\cite[Corollary~2.7]{HS09}, we have $|a|^2=a^*a=\left((a^*)^n a^n\right)^{\frac{1}{n}}\stackrel{so}{\longrightarrow}0$ as $n\to \infty,$ and hence, $a=0.$} that contradicts the assumption that $x$ is non-zero. 
%So, $e$ is a nonzero projection.
Moreover, 
\begin{align}\label{tauxtauy}
\tau_\cM(e) & =\mu_x(\mathbb{C}\setminus \{0\})\stackrel{\eqref{muxdelta}}{=}\tau_\cN(p)\mu_{yp}(\mathbb{C}\setminus \{0\})\le
\tau_\cN(p)\mu_{yp}(\mathbb{C})=\tau_\cN(p).
\end{align}
By Theorem~\ref{HSprojection}, $e$ is $x$-invariant. Thus, we may decompose  $x$  as
$$x=\begin{pmatrix}
exe & exf\\
0 & fxf \end{pmatrix},$$ where $f=\mathbf{1}_\cM-e.$
Since $x$ is positive,  we have $exf=0$ and $f x f$ is positive.
Applying Theorem~\ref{HSprojection}(iv) again, the Brown measure $\mu_{fxf}$ is concentrated on $0,$
that is, $fxf$ is s.o.t.-quasinilpotent~\cite[Corollary~2.7]{HS09}.
We have $fxf=0$.
Hence,
$x=\begin{pmatrix}
exe & 0 \\
0 & 0 \end{pmatrix},$
which implies that   $s(x)=s(exe)\le e.$  Therefore
\begin{align*}
\left\|x\right\|_S & =\tau_\cM\left(s(x)\right) \le \tau_\cM(e) \stackrel{\eqref{tauxtauy}}{\le } \tau_\cN\left(p\right)=\tau_\cN(l(y))=\left\|y\right\|_S.
\end{align*}
This completes the proof.
\end{proof}

Now we are in a position to present the proof of Theorem~\ref{HKcon}.

\begin{proof}[Proof of Theorem~\ref{HKcon}] 
($\Leftarrow$). Since any Jordan $*$-isomorphism between two II$_1$-factors is trace-preserving, it follows that $\Phi$ is also determinant-preserving (see definition~\eqref{det}). 
By Corollary \ref{isometryvNa}, $\Phi$ is of the form \eqref{precise form}.  
  By multiplicity of the determinant, the mapping 
  $x\in \cN \mapsto axa^{-1}\in \cN,$ where $a$ is invertible element in $\cN$ (see Corollary \ref{isometryvNa}), is also determinant-preserving. In other words,  any Jordan isomorphism is a determinant-preserving mapping.

($\Rightarrow$). 
Let $x\in \cM$ be a non-zero element, and consider its  polar decomposition
$x=u|x|$ (the partial isometry $u$ can be chosen to be  unitary since  $\cM$ is finite).
Since $\Phi$ is  determinant-preserving, it follows that 
\begin{align*}
 {\rm det}(x-\lambda u) = {\rm det}(\Phi(x-\lambda u) )  ={\rm det}(\Phi(x)-\lambda \Phi(u)),\, \lambda\in \mathbb{C}
\end{align*}
and
\begin{align*}
{\rm det}(\Phi(u))={\rm det}(u)=\sqrt{{\rm det}(u^* u)}=\sqrt{{\rm det}(\mathbf{1}_\cM)}=1.
\end{align*}
Since ${\rm det}(\Phi(u))=1,$ it follows from  \cite[Lemma 2.3]{HS07} that $\Phi(u)^{-1}\in \mathcal{L}_{\rm log}(\cN, \tau_\cN),$ and
hence, $\Phi(x)\Phi(u)^{-1}\in \mathcal{L}_{\rm log}(\cN, \tau_\cN).$
Using the multiplicativity of the determinant, we compute:
\begin{align*}
{\rm det}(x-u\lambda) & ={\rm det}(u(u^*x-\lambda\mathbf{1}_\cM))={\rm det}(u){\rm det}(|x|-\lambda\mathbf{1}_\cM)={\rm det}(|x|-\lambda\mathbf{1}_\cM)
\end{align*}
and similarly,
\begin{align*}
{\rm det}(\Phi(x)-\lambda \Phi(u))= {\rm det}\left(\Phi(x)\Phi(u)^{-1}-\lambda\mathbf{1}_\cN\right){\det}(\Phi(u))={\rm det}\left(\Phi(x)\Phi(u)^{-1}-\lambda\mathbf{1}_\cN\right).
\end{align*}
By the determinant-preserving property of $\Phi$, we have  
\begin{align*}
{\rm det}(|x|-\lambda \mathbf{1}_\cM)={\rm det}\left(\Phi(x)\Phi(u)^{-1}-\lambda \mathbf{1}_\cN\right).
\end{align*}
Taking the logarithm of determinants, we obtain
\begin{align*}
L(|x|-\lambda\mathbf{1}_\cM) & ={\rm log}\left({\rm det}(|x|-\lambda\mathbf{1}_\cM)\right)\\
&={\rm log}\left({\rm det}(\Phi(x)\Phi(u)^{-1}-\lambda\mathbf{1}_\cN)\right)=L(\Phi(x)\Phi(u)^{-1}-\lambda\mathbf{1}_\cN).
\end{align*}
By the uniqueness of the Brown measure (see also \eqref{bro},~\eqref{Ldet}), it follows that
$\mu_{|x|}=\mu_{\Phi(x)\Phi(u)^{-1}}.$ By Lemma~\ref{sameBrown},
we have
 \begin{align*}
 \left\|x\right\|_S & = \left\||x|\right\|_S \stackrel{{\rm Lem.}\ref{sameBrown}}{\le}\left\|\Phi(x)\Phi(u)^{-1}\right\|_S\stackrel{\eqref{llrr}}{=}\left\|\Phi(x)\right\|_S.
 \end{align*}
Thus,    $\left\|x\right\|_S\le \left\|\Phi(x)\right\|_S.$ 
On the other hand, since $\Phi^{-1}$ also preserves determinants,
 the inverse inequality $\left\|x\right\|_S\ge \left\|\Phi(x)\right\|_S$ holds. Thus,  $$\left\|x\right\|_S=\left\|\Phi(x)\right\|_S$$ for all $x\in \cM,$ proving that
  $\Phi$ is a rank metric isometry. By Corollary~\ref{isometryvNa}, $\Phi$ must be either an isomorphism or an anti-isomorphism.   This completes the proof.
\end{proof}

From the uniqueness of the Brown measure and Eq.\eqref{Ldet}, it follows that a linear bijection $\Phi$ between two II$_1$-factors is Brown-measure-preserving if and only if it is determinant-preserving.\footnote{ Indeed, a unital linear bijection $\Phi:\cM \to \cN$ is determinant-preserving if and only if ${\rm det}(x-\lambda \mathbf{1}_\cM) = {\rm det}(\Phi(x)-\lambda \mathbf{1}_\cN)$ for all $x\in \cM$ and $\lambda \in \mathbb{C},$ which is equivalent $\mu_x=\mu_{\Phi(x)}$ (see \eqref{bro},~\eqref{Ldet}).} Therefore, Theorem~\ref{HKcon} implies the following result.

\begin{corollary}\label{Bro-mp}
Let $\cM$ and $\cN$ be factors of type II$_1$  and let  $\Phi$  be a unital linear bijection of
$\cM$ onto $\cN$.  Then, $\Phi$ preserves the Brown measures  if and
only if it is an  isomorphism or an anti-isomorphism.

\end{corollary}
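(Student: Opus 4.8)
The plan is to reduce the statement directly to Theorem~\ref{HKcon} by first establishing, for a unital linear bijection $\Phi:\cM\to\cN$, the equivalence between being \emph{Brown-measure-preserving} (meaning $\mu_{\Phi(x)}=\mu_x$ for all $x\in\cM$) and being \emph{determinant-preserving} (meaning ${\rm det}(\Phi(x))={\rm det}(x)$ for all $x\in\cM$). Granting this equivalence, Theorem~\ref{HKcon} says precisely that either property holds if and only if $\Phi$ is an isomorphism or an anti-isomorphism, which is the assertion of the corollary.

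First I would check that a Brown-measure-preserving unital linear bijection is determinant-preserving. Taking $\lambda=0$ in \eqref{Ldet} gives ${\rm log}\,{\rm det}(y)=\int_{\mathbb{C}}{\rm log}|t|\,d\mu_y(t)$ for every $y$ in $\cM$ or $\cN$ (with the value $-\infty$ allowed when $y$ is not invertible), so $\mu_{\Phi(x)}=\mu_x$ immediately forces ${\rm det}(\Phi(x))={\rm det}(x)$. For the converse, suppose $\Phi$ is determinant-preserving. Using unitality and linearity, $\Phi(x-\lambda\mathbf{1}_\cM)=\Phi(x)-\lambda\mathbf{1}_\cN$, so for every $x\in\cM$ and $\lambda\in\mathbb{C}$,
\[
L(\Phi(x)-\lambda\mathbf{1}_\cN)={\rm log}\,{\rm det}(\Phi(x)-\lambda\mathbf{1}_\cN)={\rm log}\,{\rm det}(x-\lambda\mathbf{1}_\cM)=L(x-\lambda\mathbf{1}_\cM).
\]
By \eqref{Ldet} the compactly supported probability measure $\mu_{\Phi(x)}$ then satisfies $\int_{\mathbb{C}}{\rm log}|t-\lambda|\,d\mu_{\Phi(x)}(t)=L(x-\lambda\mathbf{1}_\cM)$ for all $\lambda$, and since $\int_{\mathbb{C}}{\rm log}^+|t|\,d\mu_{\Phi(x)}(t)<\infty$, the uniqueness clause for the Brown measure stated after \eqref{bro} identifies $\mu_{\Phi(x)}$ with $\mu_x$; thus $\Phi$ is Brown-measure-preserving.

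Combining the two implications with Theorem~\ref{HKcon} finishes the proof. I do not expect any genuine obstacle here, since all the substantive work is already contained in Theorem~\ref{HKcon}; the only points needing a little care are the bookkeeping with the value $-\infty$ of ${\rm log}\,{\rm det}$ on non-invertible elements and the precise form of the uniqueness statement for the Brown measure that is invoked.
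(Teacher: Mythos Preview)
Your proposal is correct and follows essentially the same route as the paper: the paper derives the corollary directly from Theorem~\ref{HKcon} by observing (via \eqref{bro}, \eqref{Ldet} and the uniqueness of the Brown measure) that for a unital linear bijection, being determinant-preserving is equivalent to being Brown-measure-preserving. Your write-up simply spells out this equivalence in slightly more detail than the paper's one-line justification.
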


\section{Remarks and examples}

\begin{remark} It is important to note that  the equality of Brown measures
$\mu_x=\mu_y$ does not imply $\mu_{|x|}=\mu_{|y|}.$
Indeed, consider a non-zero partial isometry $u\in \cM$ such that $u^2=0$.  
Let $x=u$ and $y=\lambda u,$ where $\lambda$ is a non-zero complex scalar with $|\lambda|\neq 1.$ 
Since $u^2=0,$ it follows that
\begin{align*}
\mu_x =\delta_0 =\mu_y .
\end{align*} 
On the other hand,
\begin{align*}
\mu_{|x|} & =\mu_{u^*u} =\tau_\cM(u^*u)\delta_1+(1-\tau_\cM(u^*u))\delta_0,\\
\mu_{|y|} & =\mu_{|\lambda|u^*u} =\tau_\cM(u^*u)\delta_{|\lambda|}+(1-\tau_\cM(u^*u))\delta_0.
\end{align*}
Therefore, in the proof of Theorem~\ref{HKcon}, we   work with $|x|$ and $\Phi(x)\Phi(u)^{-1}$ instead of $x$ and $\Phi(x)$. 
\end{remark}

\begin{remark} Let $\Phi: \mathbb{M}_n \to \mathbb{M}_n$ be a linear determinant-preserving bijection. Then  $\Phi$ preserves the characteristic polynomial of matrices as given by \eqref{plambda}.
Consequently, in this setting, a determinant-preserving bijection coincides precisely with a Brown measure-preserving map. In the proof of Theorem~\ref{HKcon}, we  make use of   the absolute values of matrix elements and demonstrate that any determinant-preserving bijection is also rank-preserving. 
Our approach remains novel even within the framework of matrix algebras.
\end{remark}

\begin{example}\label{exam4} Let $\cM$ be a   type I$_n$-factor, that is, the matrix algebra
$\mathbb{M}_n$ of all $n\times n$ matrices over the complex numbers
$\mathbb{C}$ and let $x\in \mathbb{M}_n.$ 
As mentioned in the Introduction, 
A complex number  $\lambda $ is an eigenvalue of $x$ if and only if ${\rm det}(\lambda -x)=0.$ It is easy to deduce that  
 the class  of linear determinant-preserving bijections  coincides with class of linear spectrum-preserving
bijections.

However, for type II$_1$-factors $\cM$,  the situation changes dramatically. That is, 
there exists an element $x$ in a  type II$_1$-factor such that
${\rm det}(x-\lambda\mathbf{1})>0$ for all complex scalar $\lambda.$

Let $\{e_t\}_{0\le t\le 1}$ be a family  of projections in $\cM$ satisfying
\begin{align*}
e_t\le e_s,\, t\le s,\,\, \tau(e_t)=t,\, 0\le t \le 1.
\end{align*}
%Such a family always exists in a II$_1$-factor, for example, we can choose such family in a maximal abelian subalgebra of II$_1$-factor.

Define $x:=\int\limits_0^1 tde_t.$ Since $\tau(e_t)=t$ for all $0\le t\le 1,$ it follows that  the Brown measure
$\mu_x$ coincides with the Lebesgue measure on the unit interval $[0,1]$ (see \eqref{normalBrown}).
Note that the spectrum of $x$ is $\sigma(x)=[0,1], $ while  ${\rm det}(x-\lambda \mathbf{1})>0$ for all $\lambda \in \mathbb{C}.$
Indeed,  by   \eqref{Ldet}, for $\lambda \in [0,1]$,  we have 
\begin{align*}
{\rm det}(\lambda\mathbf{1} -x)=\exp\left(\int\limits_0^1 {\rm log}|\lambda - t|dt\right)=
\begin{cases}
e^{-1}, & \text{if  }\lambda=0, 1\\
e^{-1}\lambda^\lambda (1-\lambda)^{1-\lambda}, & \text{if   } 0<\lambda<1, 
 \end{cases}
\end{align*}
and,   for $\lambda \notin [0,1]$,  we have
\begin{align*}
{\rm det}(\lambda\mathbf{1} -x) & =\exp\left(\int\limits_0^1 {\rm log}|\lambda - t|dt\right)\ge c,
\end{align*}
where $c=\inf\limits_{0\le t \le 1}|\lambda - t|>0.$

\end{example}

Recall that the singular value function $\mu(x)$ of $x\in S(\cM)$ can be expressed as follows \cite[Section 4.1]{BCLSZ}:
\begin{align*}
\mu(t; x) = \inf\left\{s\ge 0: \tau_\cM(e_{(s, \infty)})\le t\right\}.
\end{align*}
The space $S(\cM)$ is a complete, metrizable, topological $*$-algebra. 
Indeed, it can be
equipped with a symmetric $F$-norm, by which the topology induced
is equivalent to the measure topology \cite[Proposition 4.2.5]{BCLSZ} (see also \cite{HS20}):
\begin{align*}
\left\|x\right\|_{L_0}=\inf\limits_{t>0}\left\{t+\mu(t;x)\right\}=\inf\limits_{\lambda>0}\left\{\lambda+\tau_\cM(e_{(\lambda,\infty)}(|x|))\right\},\, x\in S(\cM).
\end{align*}

Combining Theorem~\ref{HKcon}, Corollary~\ref{isometryvNa}, \cite[Corollary~1.5]{AK2020} and \cite[Corollary 4.4]{BHS24}, we obtain the next isomorphic characterization of von Neumann factors of type II$_1.$

\begin{corollary} Let $\cM$ and $\cN$ be von Neumann II$_1$-factors   with faithful normal tracial states $\tau_\cM$ and $\tau_\cN,$
respectively.
Then the following assertions are equivalent.
\begin{enumerate}
\item The projection lattices
$P(\cM)$ and $P(\cN)$ are lattice isomorphic;
\item $\cM$ and $\cN$ are  Jordan $*$-isomorphic;
\item $\cM$ and $\cN$ are $L_0$-isometric;
\item $\cM$ and $\cN$ are rank metric isometric;
\item there is a unital linear determinant-preserving bijection from $\cM$ onto $\cN.$
\end{enumerate}
\end{corollary}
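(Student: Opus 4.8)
The plan is to route every implication through assertion~(2), i.e.\ to prove $(1)\Leftrightarrow(2)$, $(3)\Leftrightarrow(2)$, $(4)\Leftrightarrow(2)$ and $(5)\Leftrightarrow(2)$; the five-fold equivalence then follows at once. Throughout I will freely use the facts recorded after the statement of Theorem~\ref{isometry}: a Jordan $*$-isomorphism $J$ between two II$_1$-factors is automatically trace-preserving, is either a $*$-isomorphism or a $*$-anti-isomorphism, and extends canonically to a Jordan $*$-isomorphism $S(\cM)\to S(\cN)$.

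For the implications out of~(2), let $J:\cM\to\cN$ be a Jordan $*$-isomorphism. Then $J$ restricts to an order isomorphism $P(\cM)\to P(\cN)$, hence a lattice isomorphism, which gives~(1). The identity $\|J(x)\|_S=\|x\|_S$ established at the beginning of the proof of Theorem~\ref{isometry}, applied to the extension of $J$ to $S(\cM)$ and restricted to $\cM$, shows $J$ is a rank metric isometry, giving~(4). Being a $*$-isomorphism or $*$-anti-isomorphism, $J$ sends $|x|$ to an operator with the same singular value function as $|x|$ (for an anti-isomorphism one gets that of $|x^*|$, which coincides with that of $|x|$), and, being trace-preserving, it therefore preserves $\|\cdot\|_{L_0}$; this gives~(3) --- alternatively one quotes the inclusion of trace-preserving Jordan $*$-isomorphisms into the $L_0$-isometry group, \cite[Corollary~4.4]{BHS24}. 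Finally $J$ is unital, linear, bijective and, by the factor dichotomy, multiplicative or anti-multiplicative, hence determinant-preserving by the ``$\Leftarrow$'' direction of Theorem~\ref{HKcon}, which gives~(5).

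The reverse implications are precisely the structural theorems, invoked as black boxes: $(1)\Rightarrow(2)$ is \cite[Corollary~1.5]{AK2020}, the projection-lattice version of Dye's theorem, applicable because a II$_1$-factor has no type~I$_2$ summand; $(3)\Rightarrow(2)$ is \cite[Corollary~4.4]{BHS24}, which writes any surjective $L_0$-isometry $\cM\to\cN$ as $u\,J(\cdot)$ with $u$ a unitary of $\cN$ and $J:\cM\to\cN$ a trace-preserving Jordan $*$-isomorphism; $(4)\Rightarrow(2)$ is Corollary~\ref{isometryvNa}, which produces a Jordan $*$-isomorphism $\cM\to\cN$ from any rank metric isometry; and $(5)\Rightarrow(2)$ is the ``$\Rightarrow$'' direction of Theorem~\ref{HKcon}, yielding $\Phi(x)=aJ(x)a^{-1}$ with $J:\cM\to\cN$ a Jordan $*$-isomorphism. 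Assembling the two batches proves the corollary. Since this is a synthesis rather than a genuinely new argument, the only point demanding attention is that the external citations be quoted in the exact generality needed --- in particular that \cite[Corollary~1.5]{AK2020} upgrades a bare lattice isomorphism (not just an orthoisomorphism) to a Jordan $*$-isomorphism for II$_1$-factors, and that \cite[Corollary~4.4]{BHS24} is available for the concrete $L_0$-space of a II$_1$-factor with its $F$-norm $\|\cdot\|_{L_0}$ --- together with the observation that $(2)\Rightarrow(5)$ rests on the factor dichotomy, already used in the ``$\Leftarrow$'' part of Theorem~\ref{HKcon} to make a Jordan $*$-isomorphism (anti-)multiplicative.
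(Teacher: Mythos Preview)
Your proposal is correct and follows exactly the route the paper intends: the paper gives no proof beyond the sentence ``Combining Theorem~\ref{HKcon}, Corollary~\ref{isometryvNa}, \cite[Corollary~1.5]{AK2020} and \cite[Corollary 4.4]{BHS24}, we obtain\ldots'', and your argument is precisely that combination, routed through~(2). Your closing caveats about the scope of the cited external results are apt but do not signal a gap in the paper's own reasoning.
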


In \cite[Lemma 4.2]{BHS24}, a   connection between the measure metric and rank metric was established.
For any $x\in S(\cM)$ we have
\begin{align*}
\lim\limits_{\lambda \to \infty}\left\|\lambda x\right\|_{L_0} = \tau(r(x))=\left\|x\right\|_S.
\end{align*}
This shows that every measure metric isometry between Murray--von Neumann algebras
is also a rank metric isometry. However, the converse does not hold in general.
For example, if $a$ is not a scalar multiple of a unitary operator, the right multiplication operator $R_a$
acts as a rank metric isometry but not an $L_0$-isometry\cite[Corollary 4.4]{BHS24}.

The general form of surjective additive mappings on $\mathbb{M}_n$ 
that preserve rank-one matrices was characterized in~\cite[Corollary]{OS93}. Precisely, if $\phi: \mathbb{M}_n\to \mathbb{M}_n$ is a surjective additive mapping preserving rank-one matrices, there exists a ring automorphism
$h : \bC \to  \bC$ such that $\phi$ is either of the form
\begin{center}
    $\phi\left([\lambda_{ij}]\right)=a\left[h\left(\lambda_{ij}\right)\right]b$ or
$\phi\left([\lambda_{ij}]\right)=a\left[h\left(\lambda_{ij}\right)\right]^t b$
\end{center}
for all $[\lambda_{ij}]\in \mathbb{M}_n,$ where $a$
and $b$ are invertible matrices, $x^t$ denotes the transpose of~$x.$

Notably,  $\mathbb{M}_n$ admits nonlinear rank-preserving mappings because the complex field $\mathbb{C}$ permits nonlinear ring automorphisms (see \cite[Example 3.4]{OS93}).

Furthermore, Bresar’s characterization of homomorphisms of matrix algebras~\cite[Corollary 3.2]{Bresar07},
applies to matrix algebras of order $\ge 2.$ Consequently, the proofs of Theorems~\ref{HKcon} and~\ref{isometry} remain valid for type I$_n$-factors with $n\ge 2.$ Thus,
a combination of 
 Theorem~\ref{HKcon} and Theorem~\ref{isometry} provides an operator-algebraic  viewpoint on  Frobenius' result, which 
characterizes
 bijective  linear  determinant-preserving  mappings and  rank-preserving  linear mappings, respectively,  on matrix algebras of order $n\ge 2$.

{\bf Acknowledgments.}
The first and the second authors were supported the NNSF of China (No.12031004, 12301160 and 12471134).
The third author was supported by the ARC.  The authors would like to thank Professor Aleksey Ber for useful discussions and helpful comments on earlier versions of this text.

\end{document}